\renewcommand\mathcal{\mathscr}
\renewcommand{\emph}{\normalem}
\theoremstyle{plain}
\newtheorem{theorem}{Theorem}[section]
\newtheorem*{theorem*}{Theorem}
\newtheorem{lemma}[theorem]{Lemma}
\newtheorem*{lemma*}{Lemma}
\newtheorem{corollary}[theorem]{Corollary}
\newtheorem{proposition}[theorem]{Proposition}
\theoremstyle{remark}
\newtheorem{remark}[theorem]{Remark}
\newtheorem*{remark*}{Remark}
\theoremstyle{definition}
\newtheorem{definition}[theorem]{Definition}
\newtheorem*{definition*}{Definition}
\theoremstyle{notation}
\newtheorem*{notation*}{Notation}
\newtheorem{Standing assumptions}[theorem]{Standing 
assumptions}
\numberwithin{equation}{section}
\newcommand\quant{\advance\quantno by1
                      \ifnum\quantno=1\qquad\else\quad\fi\forall }
\newcommand\itemno[1]{(\romannumeral #1)}
\newcommand\Dom{\mathrm{Dom}}
\newcommand\rest[1]{\kern-.1em
          \lower.5ex\hbox{$\scriptstyle #1$}\kern.05em}
\newcommand\set[1]{{\left\{#1\right\}}}
\renewcommand\mod[1]{\vert{#1}\vert}
\newcommand\bigmod[1]{\bigl\vert{#1}\bigr|}
\newcommand\Bigmod[1]{\Bigl\vert{#1}\Bigr|}
\newcommand\norm[2]{{\Vert{#1}\Vert_{#2}}}
\newcommand\bignorm[2]{\left.{\bigl\Vert{#1}\bigr\Vert_{#2}}\right.}
\newcommand\Bignorm[2]{\left.{\Bigl\Vert{#1}\Bigr\Vert_{#2}}
\right.}
\newcommand\opnorm[2]{|\!|\!| {#1} |\!|\!|_{#2}}
\newcommand\bigopnorm[2]{\bigl|\!\bigl|\!\bigl| {#1} 
\bigr|\!\bigr|\!\bigr|_{#2}}
\newcommand\prodo[2]{\left\langle#1,#2\right\rangle}
\newcommand\wrt{\,\text{\rm d}}
\newcommand\bG{\mathbf{G}}
\newcommand\BC{\mathbb{C}}
\newcommand\BN{\mathbb{N}}
\newcommand\BR{\mathbb{R}}
\newcommand\cB{\mathcal{B}}
\newcommand\cD{\mathcal{D}}
\newcommand\cI{\mathcal{I}}   
\newcommand\cJ{\mathcal{J}}
\newcommand\cL{\mathcal{L}}
\newcommand\cR{\mathcal{R}}
\newcommand\cT{\mathcal{T}}   
\newcommand\cU{\mathcal{U}}
\newcommand\tr{\mathop{\rm tr}}
\newcommand\al{\alpha}
\newcommand\be{\beta}
\newcommand\de{\delta}
  \newcommand\vep{\varepsilon}
\newcommand\la{\lambda}
\newcommand\si{\sigma}
\newcommand\vp{\varphi}
\newcommand\OV{\overline}
\newcommand\funnyk{k\hbox to 0pt{\hss\phantom{g}}}
\newcommand\lu[1]{L^1(#1)}
\newcommand\lp[1]{L^p(#1)}
\newcommand\ld[1]{L^2(#1)}
\newcommand\ldloc[1]{L_{\mathrm{loc}}^2(#1)}
\newcommand\hu[1]{H^1(#1)}
\newcommand\hufin[1]{H_{\mathrm{fin}}^1(#1)}
\newcommand\Xu{X^1}
\newcommand\Xhfin[1]{X_{\mathrm{fin}}^k(#1)}
\newcommand\Xh[1]{X^k(#1)}
\newcommand\Xk{{X^k}}
\newcommand\Xkfin{X_{\mathrm{fin}}^k}
\newcommand\Yh[1]{Y^k(#1)}
\newcommand\Yk{Y^k}
\newcommand\bYone{\mathbb{Y}^1}
\newcommand\bYh[1]{\mathbb{Y}^k(#1)}
\newcommand\bYk{\mathbb{Y}^k}
\newcommand\wh{\widehat}
\newcommand\wt{\widetilde}
\newcommand\whH{\widehat{\phantom{G}}\hbox to 0pt{\hss $H$}}
\newcommand\emspace{\hbox to 6pt{\hss}}
\newcommand\ds{\displaystyle}
\newcommand\rmi{\hbox{\rm (i)}}
\newcommand\rmii{\hbox{\rm (ii)}}
\newcommand\rmiii{\hbox{\rm (iii)}}
\newcommand\rmiv{\hbox{\rm (iv)}}
\newcommand\rmv{\hbox{\rm (v)}}
\newcommand\One{{\mathbf{1}}}
\newcommand\e{\mathrm{e}}
\newcommand\Inj{\mathrm{Inj}}
\newcommand\Ric{\mathop{\rm Ric}}
\newcommand\QBk{q^2_k(B)}
\newcommand\QOVBk{q^2_k(\OV B)}
\newcommand\Qk[1]{q_k^2({#1})}
\newcommand\QBkperp{q^2_k(B)^{\perp}}
\newcommand\Qkperp[1]{q_k({#1})^{\perp}}
\newcommand\PBk{\pi_{B,k}}
\newcommand\bktwo{h_k^2}
\newcommand\Pik[1]{\pi_{#1,k}}
\newcommand\OVB{\overline{B}}
\DeclareSymbolFont{EUEX}{U}{euex}{m}{n}
\DeclareSymbolFont{euexlargesymbols}{U}{euex}{m}{n}
\DeclareMathSymbol{\intop}{\mathop}{euexlargesymbols}{"52}
     \def\int{\intop\nolimits}
\DeclareSymbolFont{euexsymbols}     {U}{euex}{m}{n}
\DeclareMathSymbol{\smallint}{\mathop}{euexsymbols}{"52}
\begin{document}

\title[The dual of Hardy type spaces]
{Harmonic Bergman spaces, the Poisson equation\\
and the dual of Hardy-type spaces \\
on certain noncompact manifolds}

\subjclass[2000]{} 

\keywords{Atomic Hardy space, $BMO$, noncompact manifolds, 
exponential growth,
harmonic Bergman space, quasi-harmonic function, Poisson 
equation.}

\thanks{Work partially supported by PRIN 2010 ``Real and 
complex manifolds: 
geometry, topology and harmonic analysis".}

\author[G. Mauceri, S. Meda and M. Vallarino]
{Giancarlo Mauceri, Stefano Meda and Maria Vallarino}

\address{Giancarlo Mauceri: Dipartimento di Matematica\\ 
Universit\`a di Genova\\
via Dodecaneso 35\\ 16146 Genova\\ Italy 
-- mauceri@dima.unige.it}

\address{Stefano Meda: 
Dipartimento di Matematica e Applicazioni
\\ Universit\`a di Milano-Bicocca\\
via R.~Cozzi 53\\ I-20125 Milano\\ Italy
-- stefano.meda@unimib.it}

\address{Maria Vallarino:
Dipartimento di Scienze Matematiche\\
Politecnico di Torino\\
Corso Duca degli Abruzzi, 24\\ 
I-10129 Torino \\
Italy
-- maria.vallarino@polito.it}

\begin{abstract}
In this paper we consider a complete connected noncompact 
Riemannian manifold $M$ with bounded geometry and spectral 
gap.  
We realize the dual space $\Yh{M}$ of the Hardy-type 
space $\Xh{M}$, introduced in a previous paper of the authors,
as the class of all locally square integrable functions satisfying 
suitable $BMO$-like conditions, where the role of the constants is
played by the space of global $k$-quasi-harmonic functions.
Furthermore we prove that $\Yh{M}$ is also 
the dual of the space $\Xkfin(M)$ of finite linear combination of $
\Xk$-atoms.   
As a consequence, if $Z$ is a Banach space and $T$ is
a $Z$-valued linear operator defined on $\Xkfin(M)$, then $T$ 
extends to a 
bounded operator from $\Xk(M)$ to $Z$ if and only if it is uniformly
bounded on $\Xk$-atoms.
To obtain these results we prove the global solvability of 
the generalized Poisson equation $\cL^ku=f$ with $f\in 
L^2_{\mathrm{loc}}(M)$ and we study some properties of 
generalized Bergman spaces of harmonic 
functions on geodesic balls.
\end{abstract}
\subjclass[2010]{30H10, 42B20, 42B35, 58C99}
\par
\maketitle

\setcounter{section}{0}
\section{Introduction} \label{s:Introduction}

A seminal result of C.~Fefferman \cite[Thm~2]{FeS} identifies the 
Banach dual
of the Hardy space $\hu{\BR^n}$ as $BMO(\BR^n)/\BC$, the 
space
of all functions of bounded mean oscillation modulo constants.
Functions in $BMO(\BR^n)$ possess the nice property of being
``well approximated'' on each ball by constants, to wit
\begin{equation} \label{f: intro uno}
\sup_{B} \inf_{c\in \BC} \frac{1}{\mod{B}} \int_{B} \mod{f(x) - c}^2 
\wrt x
< \infty.
\end{equation}
The continuous linear functionals on $\hu{\BR^n}$
are precisely those, which, restricted to 
finite linear combinations $f$ of $H^1$-atoms, are of the form
\begin{equation} \label{f: intro due}
\la_g(f)
:= \int_{\BR^n} f(x) \, g(x) \wrt x
\end{equation}
for some function $g$ in $BMO(\BR^n)$. 
\par  
Recently several authors have investigated Hardy spaces on non-compact doubling Riemannian manifolds \cite{AMR, MRu, Ru}. On non-doubling manifolds, versions of the so called local Hardy spaces, introduced by D. Goldberg in the context of $\BR^n$  \cite{Gol}, have been investigated in \cite{Tay, CMcM}. These local Hardy spaces are well adapted to obtain endpoint estimates for singular integrals whose kernels have only local singularities, such as, for instance, pseudodifferential operators. However, it is known that singular integral operators whose kernel is also singular at infinity, such as  Riesz transforms or imaginary powers of the Laplacian, do not map the local Hardy spaces in $L^1(M)$.
To overcome this problem, in \cite{MMV1, MMV2} the authors introduced and studied the properties of a family of global Hardy-type spaces  on a class of non-doubling manifolds that includes all symmetric spaces of 
the noncompact type. 
\par
In this paper we aim at proving a version of Fefferman's result for 
this new class of spaces.
A striking difference between the aforementioned classical result
and our version thereof is that the role played by constants in the 
former
will be played in the latter by quasi-harmonic functions, i.e., 
solutions to the
(generalised) Poisson equation $\cL^k u = c$ for some positive 
integer $k$ and
constant $c$.  Here $\cL$ denotes minus the Laplace--Beltrami 
operator on $M$.
\par
We elaborate on this.
In \cite{MMV1, MMV2} we defined a strictly decreasing
sequence $X^1(M), X^2(M), X^3(M),...$ of subspaces of $\lu{M}$, 
where $M$
is a complete connected noncompact 
Riemannian manifolds with Ricci curvature bounded from below, 
positive injectivity radius and spectral gap (see also \cite{Vo} for 
an
interesting variant of the spaces $\Xh{M}$).  Note that these 
manifolds 
are of exponential volume growth, 
hence their Riemannian measure $\mu$ is \textit{nondoubling}.
Important examples of manifolds with these properties are 
nonamenable
connected unimodular Lie groups equipped with a left
invariant Riemannian distance, and symmetric spaces
of the noncompact type with the Killing metric. 
The spaces $\Xk(M)$ share with 
the classical Hardy space $\hu{\BR^n}$ the following properties 
(see
\cite{MMV1,MMV2}):
\begin{itemize}
\item[(i)] 
if $p$ is in $(1,2)$, then the Lebesgue space $\lp{M}$ is an 
interpolation 
space between $\Xk(M)$ and $\ld{M}$;
\item[(ii)]  some interesting operators, such as the Riesz 
transforms associated 
to $\cL$ and the purely imaginary powers of $\cL$,
are bounded from $\Xk(M)$ to $\lu{M}$ for $k$ large enough (improvements thereof will appear in \cite{MMV3}); 
\item[(iii)]
the space $\Xu(M)$ admits an atomic decomposition in terms of 
atoms, 
which are defined much as in the classical case, 
but are supported in balls of radius at most one
and satisfy an appropriate \textit{infinite dimensional} cancellation 
condition.    
Under the additional assumption that some (depending on $k$) 
covariant derivatives
of the Ricci tensor are bounded, the same holds for $\Xk(M)$, $k
\geq 2$.
\end{itemize}
For this reason we call the spaces $\Xk(M)$ \textit{generalised 
Hardy spaces}.  
They play for harmonic analysis on $M$ a role similar to that played by
$\hu{\BR^n}$ on $\BR^n$ \cite{St2} and, more generally, by
the Coifman--Weiss Hardy space \cite{CW}  on spaces of 
homogeneous type. \par
In order to describe the cancellation condition alluded to in \rmiii\ 
above, 
we define, for each geodesic ball~$B$, 
the class $\Qk{\OV B}$ of all {\it $k$-quasi-harmonic functions} on $\OV B$ as the class of functions  that are restrictions to $\OV{B}$ of functions $v$ such that $$\cL^k v=const$$ in some open neighbourhood of $\OV{B}$.
\par
Atoms in $\Xk(M)$ are then $\ld{M}$ functions $A$ 
with support contained in a ball $B$
of radius at most one such that 
\begin{itemize}
\item[\rmi] $\int A\,v\wrt\mu= 0\qquad \forall v\in \Qk{\OV B};$
\item[\rmii] $\norm{A}{2}\le \mu(B)^{-1/2}.$
\end{itemize}

The space $\Xk(M)$ is the space of all (possibly infinite) linear
combinations of $\Xk$-atoms with $\ell^1$ coefficients, endowed 
with the standard 
``atomic norm''.  
If we consider atoms with support contained in balls  
of radius at most $s>0$, instead of atoms with support in balls
of radius at most $1$, we obtain the same space of functions, and 
the two 
corresponding ``atomic norms'' are equivalent \cite{MMV2}.  
In view of this observation, we may choose the ``scale parameter'' 
$s$ 
equal to $s_0 :=(1/2)\, \Inj(M)$.  This
will simplify some of the arguments below, and avoid many 
annoying
technicalities. We shall call atoms supported in balls od radius at most $s_0$ {\it admissible}.

An important point that we overlooked in \cite{MMV2}, and that we 
shall
discuss in Section~\ref{s: Background material}, is that the 
cancellation condition
for $\Xk$-atoms may be equivalently formulated, at least for atoms 
$A$
with support in small balls, by requiring that
$A$ is orthogonal to $q_k(M)$, the space of all global $k$-quasi-
harmonic functions, 
i.e. the space of all solutions to the (generalised) Poisson equation
$$
\cL^k u = c,
$$
where $c$ is an arbitrary constant.  In Section~\ref{s: Solvability} 
we shall prove that the generalised Poisson equation has global 
solutions and
that if $r_B$ is small enough, then 
functions in $\Qk{\OV B}$ may be approximated in the $\ld{B}$ 
norm to
any degree of precision by global $k$-quasi-harmonic functions. 
This suggests to define the \textit{generalised} $BMO$ space 
$GBMO^k(M)$ as the space of all locally square integrable 
functions
$G$ on $M$ such that 
\begin{equation} \label{f: intro tre}
\norm{F}{GBMO^k}
:= \sup_B \inf_{V \in q_k(M)}  
\Bigl(\frac{1}{\mu(B)} \int_{B} \bigmod{G - V}^2 \wrt \mu\Bigr)^{1/2}
< \infty,
\end{equation}
where the supremum is taken with respect to all balls of radius at 
most $s_0$.  
Note that this ``norm'' annihilates all global $k$-quasi-harmonic 
functions,
and defines a genuine norm on the quotient space $GBMO^k(M)/
q_k(M)$.  
Loosely speaking, functions in $GBMO^k(M)$ may be ``well 
approximated''
on balls of radius at most $s_0$ by global $k$-quasi-harmonic 
functions.
Our main result, Theorem~\ref{t: main}, 
states that the Banach dual of $\Xk(M)$ is isomorphic to 
$GBMO^k(M)/q_k(M)$.  
Specifically, the continuous linear functionals on $\Xk(M)$ are 
precisely
those, which, restricted to finite linear combinations $F$ of $\Xk$-
atoms,  
are of the form
\begin{equation} \label{f: intro quattro}
\la_G(F)
:= \int_M F \, G \wrt \mu
\end{equation}
for some function $G$ in $GBMO^k(M)$.   
Note the analogy between the classical case \eqref{f: intro uno}, 
\eqref{f: intro due}, and our setting \eqref{f: intro tre}, 
\eqref{f: intro quattro}.  
It is an interesting problem
to determine explicitly the function $G$ that corresponds to a 
given functional 
$\la$.  We solve this problem in Section~\ref{s: dual of H-t s}.  It 
may be worth
observing that one of the steps in the proof is showing that
the solutions of the generalised Poisson equation 
$$
\cL^k u = g
$$
with datum $g$ in $BMO(M)$ are in $GBMO^k(M)$, with control of 
the norms. 
As a consequence of our analysis, we prove in Section~\ref{s: 
dual of Xkfin} that 
the $\Xh{M}$-norm and the norm
$$
\inf \Bigl\{ \sum_{j} \mod{c_j}:  F = \sum_j c_j \, A_j \Bigr\},
$$
where the infimum is taken over all representations
of $F$ as a \textit{finite} sum of admissible $X^k$-atoms,
are equivalent on the space $\Xkfin(M)$ of finite linear 
combination
of $X^k$-atoms.  This implies that if $Z$ is a Banach
space and $\cT$ is a $Z$-valued linear operator on finite 
combinations of $X^k$-atoms
that is uniformly bounded on admissible $X^k$-atoms, then it 
extends 
to a bounded linear operator from $\Xh{M}$ to $Z$.  Thus, the 
atomic
decomposition is really useful to test the boundedness
of linear operators defined on finite linear combinations of $X^k$-
atoms. 
See, on this delicate point, \cite{MSV1,MSV2,MM} and the 
references therein. 
This result has already been implicitly used in \cite{MMV1,MMV2}, 
where, in order 
to show that certain singular integral operators are bounded from 
$\Xk(M)$
to $\lu{M}$, we simply checked that they are uniformly bounded on 
$X^k$-atoms.  

Further applications of the theory developed in this paper
to the boundedness of spectral multipliers
of $\cL$ and Riesz transforms will appear in \cite{MMV4}.

We briefly outline the content of this paper.
Section~\ref{s: Solvability} is devoted to the study of
the solvability of the generalised Poisson equation. 
In Section~\ref{s: Quasi} we introduce various classes
of $k$-quasi-harmonic functions and study their mutual relations. 
In Section~\ref{s: Background material}, after stating the basic 
geometric assumptions  on the manifold $M$ and 
their analytic consequences, we recall the definition 
of the spaces $\hu{M}$, $\Xh{M}$, $\Yh{M}$ and their main
properties. 
Our main result is proved in Section~\ref{s: dual of H-t s}. 
Finally, in Section~\ref{s: dual of Xkfin} we shall prove
that the dual of $\Xhfin{M}$ is isomorphic to that of $\Xh{M}$, and 
draw some
consequences concerning the extendability of Banach-valued
linear operators uniformly bounded on $X^k$-atoms. 
 
We shall use the ``variable constant convention'', and denote by $C,
$
possibly with sub- or superscripts, a constant that may vary from 
place to 
place and may depend on any factor quantified (implicitly or 
explicitly) 
before its occurrence, but not on factors quantified afterwards. Throughout the paper $\One_E$ will denote the indicator function of the set $E$.

\section{Solvability of the Poisson equation}
\label{s: Solvability}

In this section $M$ will denote a connected complete 
$n$-dimensional Riemannian manifold
of infinite volume with Riemannian measure $\mu$.
We assume that the bottom $b$ of the $\ld{M}$ spectrum of $\cL$ 
is 
strictly positive.   
The aim of this section is to investigate the solvability of the 
(generalised) Poisson
equation 
$$
\cL^k u = g,
$$
where $g$ is a datum in $\ldloc{M}$ and $k$ a positive integer.  
Clearly, if $U$ is a distributional solution of this equation, 
any other solution is of the form $U+H$, where $H$ solves
the generalised Laplace equation $\cL^k H = 0$, i.e.
$H$ is a global $k$-harmonic function on $M$, according to the 
terminology 
that we shall introduce in Definition~\ref{def: quasi-harmonic}.   

The proof of the solvability hinges on the following approximation 
result of 
$k$-harmonic functions on certain compact subsets of $M$ by global 
$k$-harmonic functions 
on $M$. 
The proof for $k=1$ can be found in \cite[Thm~3.10]{BB}; the case $k>1$ is a straightforward adaptation of the argument given there.
We recall that if $K$ is a closed subset of $M$, 
then a {\it hole} of $K$ is any component of $M\setminus K$ which 
is bounded.

\begin{lemma}[\textbf{Walsch--Pfluger--Lax--Malgrange}] \label{l: 
BB} 
Let $K$ be a compact subset of $M$  without holes and $k$ a positive integer. If $v$ is a 
solution of the equation $\cL^k v=0$ in a neighbourhood of $K$ and
$\vep>0$, then there is a function $u$ such that $\cL^k u=0$ in 
$M$ and $\sup_K\mod{v-u}<\vep$.
\end{lemma}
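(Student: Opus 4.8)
The plan is to run the classical Lax--Malgrange duality argument, transposing the $k=1$ proof of \cite{BB} from $\cL$ to $\cL^k$. Fix a neighbourhood $\Om$ of $K$ in which $\cL^k v = 0$, and view the restrictions to $K$ of global solutions of $\cL^k u = 0$ as a subspace $\cE$ of $C(K)$. Membership of $v|_K$ in the uniform closure of $\cE$ yields, for every $\vep>0$, a global solution $u$ with $\sup_K\mod{v-u}<\vep$, which is exactly the conclusion. By the Hahn--Banach theorem and the Riesz representation theorem, this membership is equivalent to the following: every finite complex Radon measure $\nu$ supported in $K$ that annihilates $\cE$, i.e. $\int u \wrt\nu = 0$ for every $u$ with $\cL^k u = 0$ on $M$, also satisfies $\int v \wrt\nu = 0$. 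So I would fix such a $\nu$ and aim to prove $\int v \wrt\nu = 0$.

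The second step is to form the potential of $\nu$. Because the bottom $b$ of the $\ld{M}$ spectrum of $\cL$ is strictly positive, $\cL$ is boundedly invertible on $\ld{M}$, so $\cL^{-k}$ exists and admits a Green kernel $G_k(x,y)$ that is smooth off the diagonal and decays (exponentially) as $d(x,y)\to\infty$. Set $w := \cL^{-k}\nu$, i.e. $w(x) = \int_K G_k(x,y)\wrt\nu(y)$. Then $\cL^k w = \nu$ in the sense of distributions; by elliptic regularity $w$ is smooth and satisfies $\cL^k w = 0$ on $M\setminus K$; and the kernel decay forces $w(x)\to 0$ as $x\to\infty$.

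The heart of the proof, and the step I expect to be the main obstacle, is to upgrade these properties to $w\equiv 0$ on the whole of $M\setminus K$. I would argue in two stages. First, combine the decay of $w$ at infinity with the annihilation hypothesis to show that $w$ vanishes on a neighbourhood of infinity; this is the analogue of the classical fact that the vanishing of all moments of $\nu$ forces the Cauchy transform to vanish at infinity, and on a manifold, lacking an explicit expansion of $G_k(x,\cdot)$ at infinity in terms of global solutions, it is precisely the delicate point to be imported from \cite{BB}. Second, since $\cL^k$ is a power of a second-order elliptic operator with smooth coefficients it enjoys the (strong) unique continuation property, so a $k$-harmonic function on $M\setminus K$ that vanishes on an open set vanishes on the entire connected component containing that set. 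Here the hypothesis that $K$ has \emph{no holes} is decisive: every component of $M\setminus K$ is unbounded, hence meets the neighbourhood of infinity on which $w=0$, and unique continuation then propagates the vanishing of $w$ across all of $M\setminus K$.

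Finally I would pair $v$ against $\nu$ through a cut-off. Choose $\phi\in C_c^\infty(\Om)$ with $\phi\equiv 1$ on a neighbourhood of $K$. Since $\operatorname{supp}\nu\subseteq K$ and $\cL^k$ is formally self-adjoint,
\[
\int v \wrt\nu = \int \phi\, v \wrt\nu = \langle \cL^k w, \phi v\rangle = \int_M w\,\cL^k(\phi v)\wrt\mu .
\]
Because $\cL^k v = 0$ on $\Om$, the function $\cL^k(\phi v)$ is supported in the transition region $\{0<\phi<1\}\subseteq M\setminus K$, where $w$ vanishes by the previous step; hence the last integral is $0$, giving $\int v\wrt\nu = 0$ and completing the Hahn--Banach argument. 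The passage from $k=1$ to general $k$ requires only replacing $\cL$, its Green kernel, and the notions of harmonicity and unique continuation by their $k$-th order counterparts; modulo the unique continuation property for $\cL^k$ and the vanishing-at-infinity step, the rest is formal.
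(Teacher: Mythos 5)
Your duality scaffold is, in outline, the right one: the paper itself gives no proof of this lemma, but the argument it cites (\cite[Thm~3.10]{BB}) is precisely the Lax--Malgrange scheme you describe --- Hahn--Banach and Riesz representation, a potential $w=\cL^{-k}\nu$ of an annihilating measure, and the cut-off pairing $\int v\wrt\nu=\int_M w\,\cL^k(\phi v)\wrt\mu$, which vanishes once one knows $w\equiv 0$ on $M\setminus K$. That last pairing step, and the use of the spectral gap to define the global potential, are fine.

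The genuine gap is the step you yourself flag as ``the delicate point to be imported from \cite{BB}'': showing $w\equiv 0$ on $M\setminus K$. This is not a technical footnote to be imported --- it is the entire content of the theorem --- and the mechanism you propose for it, ``combine the decay of $w$ at infinity with the annihilation hypothesis,'' cannot work as stated. Decay of $w$ at infinity holds for \emph{every} compactly supported measure, annihilating or not (take $\nu=\delta_p$: then $w=G_k(\cdot,p)$ decays at infinity yet vanishes on no open set), so decay carries no information from the annihilation hypothesis and gives unique continuation nothing to propagate. The classical argument runs differently: one shows that for $y$ in an unbounded component of $M\setminus K$ the kernel $G_k(\cdot,y)\big|_K$ \emph{itself} lies in the uniform closure on $K$ of restrictions of global solutions --- in $\BR^n$ via the expansion of the fundamental solution into solid harmonics, on a manifold via a pole-pushing argument along a path to infinity together with a local approximation lemma (this is where ``no holes'' enters: every component contains poles that can be pushed to infinity, where $\sup_K\mod{G_k(\cdot,y)}\to 0$). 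Only then does the annihilation hypothesis yield $w(y)=\int_K G_k(x,y)\wrt\nu(x)=0$ pointwise off $K$, with no appeal to unique continuation at all. Your write-up replaces this argument by a restatement of what must be proved. Two further points deserve care: strong unique continuation for $\cL^k$ with $k\ge 2$ is not an immediate corollary of the second-order theory (it requires, e.g., reduction of $\cL^k u=0$ to a second-order elliptic system for $(u,\cL u,\dots,\cL^{k-1}u)$); and pointwise exponential decay of the Green kernel is not justified under the hypotheses in force in Section~2 of the paper, where only completeness, infinite volume and the spectral gap are assumed, with no bounded geometry.
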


\begin{theorem} \label{t: sol} 
Suppose that $M$ is a complete, noncompact, 
Riemannian manifold
with spectral gap and that $k$ is a positive integer. 
Then for every $f$ in $\ldloc{M}$ there exists $u$ in $\ldloc{M}$ 
such that $\cL^ku=f$ in the sense of distributions.
\end{theorem}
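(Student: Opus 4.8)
The plan is to exploit the spectral gap to invert $\cL^k$ on $\ld{M}$, and then to patch together globally defined $\ld{M}$-solutions by means of the approximation Lemma~\ref{l: BB}. Since $\cL$ is nonnegative and self-adjoint with $\ld{M}$-spectrum contained in $[b,\ty)$, $b>0$, the spectral theorem produces a bounded inverse $\cL^{-k}$ on $\ld{M}$, of operator norm at most $b^{-k}$, mapping $\ld{M}$ onto $\Domain{\cL^k}$. Thus $\cL^k u=f$ is immediately solvable in $\ld{M}$ whenever $f\in\ld{M}$; the entire difficulty lies in passing from a global square-integrable datum to a merely locally square-integrable one.

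First I would fix an exhaustion of $M$ by an increasing sequence of compact sets without holes $K_1\subset K_2\subset\cdots$, with $K_j\subset\mathrm{int}(K_{j+1})$ and $\bigcup_j K_j=M$; such a sequence is obtained, for instance, by taking closed geodesic balls centred at a fixed point and filling in their bounded complementary components. I then split the datum as $f=\sum_j f_j$, where $f_1:=f\,\One_{K_1}$ and $f_j:=f\,\One_{K_j\setminus K_{j-1}}$ for $j\ge2$. Each $f_j$ has compact support, hence lies in $\ld{M}$, so that $u_j:=\cL^{-k}f_j\in\Domain{\cL^k}$ solves $\cL^ku_j=f_j$ globally. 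By construction $f_j$ vanishes on $\mathrm{int}(K_{j-1})$, whence $\cL^k u_j=0$ there; elliptic regularity then shows that, for $j\ge3$, $u_j$ is a genuine solution of $\cL^k u_j=0$ in the neighbourhood $\mathrm{int}(K_{j-1})$ of the hole-free compact set $K_{j-2}$.

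The key step is to correct each $u_j$ by a global $k$-harmonic function. Applying Lemma~\ref{l: BB} with $K=K_{j-2}$, $v=u_j$ and $\vep=2^{-j}$, I obtain a function $H_j$ with $\cL^k H_j=0$ on all of $M$ and $\sup_{K_{j-2}}\mod{u_j-H_j}<2^{-j}$. Setting $\wt u_j:=u_j-H_j$ for $j\ge3$ and $\wt u_j:=u_j$ for $j=1,2$, one still has $\cL^k\wt u_j=f_j$, because $H_j$ is $k$-harmonic; moreover each $\wt u_j$ is locally square integrable, since $u_j\in\ld{M}$ and $H_j$ is smooth. I would then define $u:=\sum_j\wt u_j$. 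On any $K_m$ the terms with $j\ge m+2$ satisfy $K_m\subset K_{j-2}$, so $\sup_{K_m}\mod{\wt u_j}<2^{-j}$; the tail therefore converges uniformly, hence in $\ld{K_m}$, while the finitely many remaining terms already lie in $\ld{K_m}$. Consequently $u\in\ldloc{M}$.

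Finally I would verify $\cL^k u=f$ in the sense of distributions: for a test function $\phi\in C_c^\infty(M)$ with support in some $\mathrm{int}(K_m)$, the local uniform convergence justifies interchanging the sum with the integral, so that, using the self-adjointness of $\cL^k$,
$$
\int_M u\,\cL^k\phi\wrt\mu
=\sum_j\int_M\wt u_j\,\cL^k\phi\wrt\mu
=\sum_j\int_M f_j\,\phi\wrt\mu .
$$
Since $f_j$ vanishes on $\mathrm{supp}\,\phi$ once $j>m$, the right-hand side collapses to $\int_M f\,\phi\wrt\mu$, as required. The only genuine obstacle is the convergence of $\sum_j\wt u_j$ near a fixed compact set—the globally defined $u_j$ carry no decay information there—and this is exactly what the $k$-harmonic corrections furnished by Lemma~\ref{l: BB} are designed to overcome.
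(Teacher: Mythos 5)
Your proof is correct and takes essentially the same route as the paper: both arguments rest on the same two pillars, namely the spectral gap making $\cL^{-k}$ bounded on $\ld{M}$ (applied to compactly supported truncations of $f$), and Lemma~\ref{l: BB} applied along an exhaustion of $M$ by hole-free compacta to correct the globally defined solutions by global $k$-harmonic functions so as to force local convergence. The only differences are organizational: the paper builds a sequence $v_j$ of solutions on the increasing sets $\wh B_{j+\vep}$ whose successive differences are made uniformly small (a telescoped version of your annular series $\sum_j \wt u_j$), and it proves the case $k=1$ first, reducing $k>1$ to the system $\cL u_\ell = u_{\ell-1}$, whereas you partition the datum into annuli and treat all $k$ at once, which Lemma~\ref{l: BB} (stated for general $k$) indeed permits.
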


\begin{proof}  
First we consider the case $k=1$.  Fix a reference point $o$ in $M
$ and denote 
by $B_R$ the open ball of radius $R$ and centre $o$, and by $
\wh B_R$ 
the union of $B_R$ with the bounded connected components of 
$M\setminus B_R$. 
Then  $\wh{B}_R$ has no holes and $M$ is the union of the 
increasing 
sequence of bounded open sets $\{\wh {B}_R\}$.  

Fix $\vep>0$ small.  
Set $v_1=\cL^{-1}(f\,\One_{\wh{B}_{1+\vep}})$. This makes sense, 
because $\cL^{-1}$ is
bounded on $\ld{M}$.  Then $v_1$ is in 
$\ld{M}$ and solves the equation $\cL v_1=f$ in $\wh{B}_{1+\vep}
$. If 
$w\in \ld{M}$ is a solution of $\cL w=f$ in $\wh{B}_{2+\vep}$ (for 
instance 
$w=\cL^{-1}(f\, \One_{\wh{B}_{2+\vep}})$), then $\cL (v_1-w)=0$ 
in $\wh{B}_{1+\vep}$. Hence, by Lemma \ref{l: BB}, there exists 
$h$ such that 
$\cL h=0$ in $M$ and $\sup_{\wh{B}_1}\mod{v_1-w-h}<1/2$. 
Thus, setting 
$v_2=w+h$, one has $\cL v_2=f$ in $\wh{B}_{2+\vep}$ 
and $\sup_{\wh{B} _1}\mod {v_1-v_2}<1/2$. 
Iterating this argument, one constructs a sequence  
of functions $v_j$ in $\ld{M}$ such that $\cL v_j=f$ in  $\wh{B}_{j+
\vep}$ and 
$\sup_{\wh{B}_j}\mod{v_j-v_{j+1}}<2^{-j}$. 
Thus $v_j$ converges in $\ldloc{M}$, whence in the sense of 
distributions,
to a limit $u$, which satisfies $\cL u=f$ in $M$.

The case $k>1$ can be reduced to $k=1$, by observing that the 
equation $\cL ^ku=f$ is equivalent to the system of $k$ equations 
$\cL  
u_\ell=u_{\ell-1}$, $\ell=1,\ldots,k$, where $u_0=f$.
\end{proof}

\section{Quasi-harmonic functions and Bergman spaces}
\label{s: Quasi}   

We introduce various spaces of functions on $M$ that will play
an important role in what follows and investigate their mutual 
relations.
Here $M$ is as in Section~\ref{s: Solvability}.  
Recall that $\cL$ is an elliptic operator.  Thus, given an open 
subset $\Omega$ of $M$,
a positive integer $k$ and a constant $c$, 
every solution $u$ of the equation 
$$
\cL^k u = c \, \One_\Omega
$$
is smooth in $\Omega$.
\par
The operator $\cL$ has been defined in the introduction as the unique self-adjoint extension of minus the Laplace-Beltrami operator acting on $C^\infty_c(M)$. We recall that  
the domain of $\cL$ in $L^2(M)$ is the space 
$
\Dom(\cL)=\set{u\in L^2(M): \cL u\in L^2(M)},
$ where $\cL u$ is interpreted in the sense 
of distributions \cite{Str}.
Henceforth we shall also denote by $\cL$ the natural extension of the Laplace-Beltrami operator to distributions.
\begin{definition} \label{def: quasi-harmonic}
Suppose that $k$ is a positive integer, and that
$\Omega$ is a \textsl{bounded} open subset of $M$. We say that a function 
$v:\Omega\to \BC$ 
is \textit{$k$-quasi-harmonic} on $\Omega$ if 
$\cL^k v$ is constant on $\Omega$ (in the sense of distributions, hence in the classical sense, since $v$ is smooth by elliptic regularity). We shall denote by  $\Qk{\Omega}$ the 
space of  
$k$-quasi-harmonic functions on $\Omega$ which belong to $L^2(\Omega)$.  
The subspace of $\Qk{\Omega}$ of all functions such that $\cL^k v=0$ in $\Omega$ will be denoted by $h^2_k(\Omega)$ and will be called the {\it ($k^{th}$ generalised) Bergman space} on $\Omega$. 
\par
Suppose now that $K$ is a compact subset of $M$. We say that $w:K\to\BC$ is {\it $k$-quasi-harmonic} on $K$ if $w$ is the restriction to $K$ of a function in $\Qk{\Omega}$, for some open set $\Omega$ containing $K$. We shall denote by $\Qk{K}$ the space of all $k$-quasi-harmonic functions on $K$. 
The subspace of $\Qk{K}$ of all functions which are restrictions to $K$ of functions in $h^2_k(\Omega)$ will be denoted by $h^2_k(K)$ and will be called the {\it ($k^{th}$ generalised) Bergman space} on $K$. 
\par
Finally we shall denote by $q_k(M)$ the space of all $k$-quasi-harmonic functions on $M$. Notice  that $q_k(M)$ is a space of functions in $C^\infty(M)$. 
\end{definition}

Clearly $\bktwo(\Omega)$ is a subspace of $\Qk{\Omega}$ of codimension 
one.  
Indeed, we have the vector space decomposition
$$
\Qk{\Omega}
= \bktwo(\Omega) \oplus \BC \,(\cL^{-k} \One_{\Omega}){\big|}_\Omega.
$$
Note that $\cL^{-k} \One_\Omega$ is in $\ld{M}$, for the bottom $b$ of 
the spectrum of 
$\cL$ is assumed to be positive, whence $\cL^{-k}$ is bounded on 
$\ld{M}$.
\par
Observe that both $\Qk{\Omega}$ and $h^2_k(\Omega)$ are closed subspaces of $L^2(\Omega)$. Indeed, in view of the decomposition above
it suffices to prove that $\bktwo(\Omega)$ is closed.  Now, 
if  $\{v_n\}$  is a sequence in $\bktwo(\Omega)$ that converges to 
$v$  in $\ld{\Omega}$, then $\cL^k v_n$ tends to $\cL^k v$ in the sense of distributions. Thus $\cL v=0$,  whence $v$ is 
in $\bktwo(\Omega)$.  
Clearly $\Qk{\OV \Omega}$ is contained in $\Qk{\Omega}$.  
We shall prove below that if the boundary of $\Omega$ is smooth 
then 
$\Qk{\OV \Omega}$ is dense in $\Qk{\Omega}$.  To prove this, we need
a few preliminary facts.

\begin{definition} \label{def: Sobolev}
For a positive integer $m$ denote by $H^m(M)$ the Sobolev 
space of
order $m$, i.e., the completion of 
$$
\set{u\in C^\infty(M): \nabla^j u\in L^2(M), \, j=0,1\ldots,m}
$$
with respect to the norm
$$
\norm{u}{H^m}
:= \Bigl(\sum_{j=0}^m \bignorm{\nabla^j u}{2}^{\!\!\!\! 2}\Bigr)^{1/2}.
$$
\end{definition}

\noindent
See \cite{He} and the references therein for more on Sobolev
spaces on manifolds.  

Given a compact subset $K$ of $M$, denote 
by $H^m(M)_K$ the subspace of $H^m(M)$ of all functions whose 
support is contained 
in  $K$, by $\mathring K$ the interior of $K$, 
and by $H^m_0(\mathring K)$ the closure of $C^\infty_c(\mathring 
K)$ in $H^m(M)$.

Suppose that $u$ is a function in $\Dom(\cL^k)$ that vanishes in 
the complement of $K$.  Then $\cL^ku$ is in $\ld{M}$
and vanishes in $K^c$.  By identifying $\cL^k u$ with its restriction
to $K$, we may interpret $\cL^k$ as a map from $\Dom(\cL^k)_K$ into $\ld{K}$.
We shall make this identification in the sequel without further comment. Henceforth, if $S$ is a subspace of $L^2(B)$, we shall denote by $S^\perp$ its orthogonal in $L^2(B)$.
\begin{lemma}\label{l: iso}
Let $K$ be a compact subset of $M$.  The following hold:
\begin{enumerate}
\item[\itemno1]
$H^{2k}(M)$ is contained in $\Dom(\cL^k)$;
\item[\itemno2]
the map $\cL^k$ is a Banach space isomorphism 
between $H^{2k}(M)_K$ and $\bktwo(K)^\perp$ (the orthogonal complement of $h^2_k(K)$ in $L^2(K)$). 
\end{enumerate}
\end{lemma}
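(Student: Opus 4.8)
The plan is to view $\cL^k$ as an everywhere-defined operator from the Banach space $H^{2k}(M)_K$ into $\ld{K}$ and to check, in turn, that it is bounded, injective, has range contained in $\bktwo(K)^\perp$, and is onto $\bktwo(K)^\perp$; the bounded inverse then comes for free from the open mapping theorem, since $H^{2k}(M)_K$ (a closed subspace of $H^{2k}(M)$) and $\bktwo(K)^\perp$ (a closed subspace of $\ld{K}$) are Banach spaces. The whole argument rests on three facts: $\cL^k$ is a \emph{local} operator, so $\cL^k u$ is supported in $K$ whenever $u$ is; the spectral gap makes $\cL^{-k}$ bounded and self-adjoint on $\ld{M}$ and $\cL^k$ injective on $\ld{M}$ (as $0\notin\sigma(\cL)$); and the global elliptic estimate $\norm{u}{H^{2k}}\simeq\norm{u}{2}+\norm{\cL^k u}{2}$, i.e.\ $\Dom(\cL^k)=H^{2k}(M)$ with equivalent norms, holds by standard elliptic regularity (see \cite{He}).

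For \itemno1 and boundedness, note first that for $k=1$ one has $\cL u=-\tr_g\nabla^2 u$, so $\bigmod{\cL u}\le\sqrt n\,\bigmod{\nabla^2 u}$ pointwise and hence $\norm{\cL u}{2}\le\sqrt n\,\norm{u}{H^2}$; since $\Dom(\cL)=\set{u\in\ld{M}:\cL u\in\ld{M}}$, this already gives $H^2(M)\subseteq\Dom(\cL)$, and iterating with elliptic regularity yields $H^{2k}(M)\subseteq\Dom(\cL^k)$. The map $\cL^k\colon H^{2k}(M)\to\ld{M}$ is defined on all of a Banach space and is closed (if $u_n\to u$ in $H^{2k}$ and $\cL^k u_n\to g$ in $\ld{M}$, then $\cL^k u_n\to\cL^k u$ in distributions, so $g=\cL^k u$), hence bounded by the closed graph theorem. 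Restricting to $H^{2k}(M)_K$ and using that $\cL^k u$ is then supported in $K$, we obtain a bounded map $\cL^k\colon H^{2k}(M)_K\to\ld{K}$.

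For injectivity, if $u\in H^{2k}(M)_K$ has $\cL^k u=0$ in $\ld{K}$, then, $\cL^k u$ being supported in $K$, in fact $\cL^k u=0$ on all of $M$, and injectivity of $\cL^k$ on $\ld{M}$ forces $u=0$. For the range inclusion, let $u\in H^{2k}(M)_K$ and let $v\in\bktwo(K)$ be the restriction of some $\tilde v\in\bktwo(\Omega)$, with $\Omega$ a bounded open neighbourhood of $K$ on which $\cL^k\tilde v=0$ (so $\tilde v$ is smooth there). Since $u$ has compact support in $\Omega$, approximating $u$ by functions in $C^\infty_c(\Omega)$ in the $H^{2k}$ norm and using the formal self-adjointness of $\cL$ removes all boundary terms and gives
\[
\int_K(\cL^k u)\,\overline v\wrt\mu
=\int_\Omega(\cL^k u)\,\overline{\tilde v}\wrt\mu
=\int_\Omega u\,\overline{\cL^k\tilde v}\wrt\mu=0 ,
\]
so the range lies in $\bktwo(K)^\perp$.

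Surjectivity is the heart of the matter, and the expected main obstacle. Fix $f\in\bktwo(K)^\perp$ and set $u:=\cL^{-k}(f\,\One_K)$; then $u\in\Dom(\cL^k)=H^{2k}(M)$ and $\cL^k u=f\,\One_K$ restricts to $f$ on $K$, so everything reduces to showing $\operatorname{supp}u\subseteq K$. Take any $\phi\in C^\infty_c(M\setminus K)$ and put $\psi:=\cL^{-k}\phi$, so $\cL^k\psi=\phi$. Choosing a bounded open $\Omega'$ with $K\subseteq\Omega'\subseteq\overline{\Omega'}\subseteq M\setminus\operatorname{supp}\phi$, we have $\cL^k\psi=0$ on $\Omega'$ and $\psi\in\ld{\Omega'}$, whence $\psi|_K\in\bktwo(K)$. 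Since $\cL^{-k}$ is self-adjoint and $f\,\One_K$ is supported in $K$,
\[
\int_M u\,\overline\phi\wrt\mu
=\int_M(f\,\One_K)\,\overline{\cL^{-k}\phi}\wrt\mu
=\int_K f\,\overline{\psi|_K}\wrt\mu=0
\]
because $f\perp\bktwo(K)$. As $\phi$ ranges over $C^\infty_c(M\setminus K)$ this shows $u=0$ a.e.\ on $M\setminus K$, i.e.\ $u\in H^{2k}(M)_K$, so $f$ is in the range. Thus $\cL^k$ is a bounded bijection of $H^{2k}(M)_K$ onto $\bktwo(K)^\perp$, and the open mapping theorem upgrades this to a Banach space isomorphism. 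The delicate point is exactly this support statement: it is the orthogonality of $f$ to $\bktwo(K)$ that localises $\cL^{-k}(f\,\One_K)$ back inside $K$, and recognising $\cL^{-k}\phi$ as an element of $\bktwo(K)$ over the neighbourhood $M\setminus\operatorname{supp}\phi$ of $K$ is what makes the duality close.
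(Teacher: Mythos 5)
Your overall strategy and most of the individual steps coincide with the paper's proof: part \itemno1 via the pointwise bound $\mod{\cL u}\le \sqrt{n}\,\mod{\nabla^2 u}$ and induction; the range inclusion by pairing $\cL^k u$ against extensions of functions in $\bktwo(K)$ (you cut off $u$ where the paper cuts off $v$, a harmless variation); injectivity from the spectral gap; surjectivity via $u:=\cL^{-k}(f\,\One_K)$ together with the duality argument that localises the support of $u$ inside $K$ (testing against $\phi\in C^\infty_c(M\setminus K)$ and recognising $\cL^{-k}\phi$ as an element of $\bktwo$ near $K$); and the closed graph/open mapping theorem at the end. All of that is sound and matches the paper.

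The genuine gap is the assertion that $\Dom(\cL^k)=H^{2k}(M)$ with equivalent norms ``by standard elliptic regularity'', which you invoke at the one point where it matters: to conclude that $u=\cL^{-k}(f\,\One_K)$ lies in $H^{2k}(M)$. In this part of the paper $M$ is only assumed to be complete, connected, of infinite volume and with spectral gap; no bounded geometry is assumed, and the lemma is stated in that generality. Standard elliptic theory gives only \emph{local} regularity; the global inclusion $\Dom(\cL)\subseteq H^2(M)$ already requires a uniform lower bound on the Ricci tensor (by the Bochner formula $\norm{\nabla^2 u}{2}^2=\norm{\cL u}{2}^2-\int_M \Ric(\nabla u,\nabla u)\wrt\mu$ for $u\in C^\infty_c(M)$, an unbounded negative Ricci term destroys the estimate), and $\Dom(\cL^k)\subseteq H^{2k}(M)$ for $k\ge 2$ requires bounds on covariant derivatives of the curvature --- this is precisely why the paper's later theorems assume $C^{2k-2}$ bounded geometry. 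Note also that the lemma itself claims only the inclusion $H^{2k}(M)\subseteq\Dom(\cL^k)$, not equality, which would be pointless if equality were free. The repair is exactly what the paper does: run your support argument first (it uses only $u\in\ld{M}$ and the self-adjointness of $\cL^{-k}$, so it is unaffected), and then, knowing that both $u$ and $\cL^k u$ are $\ld{M}$ functions with compact support, invoke interior elliptic regularity on a relatively compact neighbourhood of $K$ to conclude that $u\in H^{2k}(M)$, hence $u\in H^{2k}(M)_K$. With that substitution your proof is complete and agrees with the paper's.
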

\begin{proof}
First we prove \rmi. Let $T^{k}M$ be the bundle of covariant tensors of order $k$ and denote by $\tr: T^{k+2}M\to T^kM$ a trace, i.e. a metric contraction. Then for all sections $T$ of $T^{k+2}M$
$$
\mod{\tr(T)(x)}_x \le\,\sqrt{n}\, \mod{T(x)}_x \qquad\forall x\in M.
$$
as can be easily seen by computing the trace in  local coordinates  given by an orthonormal frame and applying Schwarz's inequality.

Next we observe that the Laplacian $\cL$ is bounded from the Sobolev space $H^{2k+2}(M)$ to $H^k(M)$. Indeed if $u\in H^{2k+2}(M)$ then, since the trace commutes with covariant derivatives, 
$$
\nabla^j \cL u=\nabla^j \tr(\nabla^2 u)=\tr (\nabla^{j+2}u) \qquad\forall j=0,1,\ldots,2k.
$$
Thus
$$
\norm{\nabla^j u}{2}\le \sqrt{n} \norm{\nabla^{j+2}u}{2},
$$
whence the boundedness of $\cL$ from $H^{2k+2}(M)$ to $H^k(M)$ follows.\par
To prove that $H^{2k}(M)\subset \Dom(\cL^k)$ 
we consider first the case $k=1$.
If $u\in H^2(M)$ then
$$
\norm{\cL u}{2}=\norm{\tr \nabla^2 u}{2}\le \sqrt{n} \norm{\nabla^2 u}{2}. 
$$
The inclusion $H^2(M)\subset \Dom(\cL)$ follows, since $C^\infty(M)\cap H^2(M)$ is dense in $H^2(M)$ and $\cL$ is closed. 
Finally, 
since $\Dom(\cL^k)=\set{u\in \Dom(\cL): \cL u\in \Dom(\cL^{k-1})}$, the inclusion $H^{2k}(M)\subset \Dom(\cL^k)$ follows by induction on $k$.

Now we prove \rmii.
First we show that $\cL^k$ maps $H^{2k}(M)_K$ into $\bktwo(K)^\perp$. 
Suppose that $u$ is in $H^{2k}(M)_K$ and $v$ is in $\bktwo(K)$. 
Denote by $\wt v$ a smooth function with compact support 
which is $k$-harmonic in an open neighbourhood of $K$ and satisfies
$\wt v_{\vert_K} = v$.  Then 
$$
\begin{aligned}
\int_{K} v\, \cL^k u  \wrt \mu
= \int_{M} \wt v\, \, \cL^k u  \wrt \mu   = \int_{M} \cL^k \wt v\,\,  u  \wrt \mu   = 0
\end{aligned}
$$
because the support of $u$ is contained in $K$ and $\cL^k v$ 
vanishes in a neighbourhood of~$K$.

Since the bottom of the $L^2$-spectrum of $\cL$ is strictly 
positive, $\cL^k$
is injective on its domain, hence on $H^{2k}(M)_K$, for this is a 
subspace of $\Dom(\cL^k)$ by \rmi\ above.

Next we prove that $\cL^k$ is onto.  Suppose that $v$ is in $\bktwo(K)^\perp$.  Denote by $\tilde{v}$ the extension of $v$ to a function on $M$ that vanishes off $K$.
Set $u: = \cL^{-k}\tilde{v}$. Clearly $u$ belongs to $\Dom(\cL^k)$.  We 
shall prove that
$u$ is in $H^{2k}(M)_K$.  First we show that the support of $u$ is 
contained in $K$.  
For every smooth function $\phi$ with support contained in $K^c$
\begin{align*}
\int_M \phi\, \, u  \wrt\mu  
&= \int_M \cL^k\cL^{-k} \phi\, \, u \wrt \mu \\
 &= \int_M  \cL^{-k} \phi\, \, \cL^k u \wrt \mu  
 = \int_M \cL^{-k} \phi \, \, \tilde{v}\wrt \mu  = 0;
\end{align*}
the last equality follows from the fact that 
$\cL^{-k}\phi$ is $k$-harmonic in a neighbourhood of $K$ (hence
its restriction to $K$ belongs to $\bktwo(K)$) and $v$ is in $\bktwo(K)^\perp$.

Since $\cL^k$ is an elliptic operator of order $2k$ and both $u$ 
and $\cL^k u$ 
are functions in $\ld{M}$ with compact support, $u$ is in $H^{2k}
(M)$.  
Thus, $\cL^{2k}$ maps $H^{2k}(M)_K$
onto $\bktwo(K)^\perp$ in a one-to-one fashion.  Furthermore $
\cL^k$ is a continuous
operator from $H^{2k}(M)_K$ to $\bktwo(K)^\perp$. The closed 
graph theorem 
then implies that $\cL^{-k}$ is continuous, thereby concluding the 
proof that
$\cL^k$ is a Banach space isomorphism.  
\end{proof}

\begin{theorem} \label{t: bergman-sobolev}
Let $K$ be a compact subset of $M$.  The following are 
equivalent:
\begin{enumerate}
\item[\itemno1]
$\bktwo(K)^\perp=\bktwo(\mathring K)^\perp$ 
\item[\itemno2]
$H^{2k} (M)_K=H^{2k}_0(\mathring K)$.
\end{enumerate}
\end{theorem}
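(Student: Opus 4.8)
The plan is to upgrade Lemma \ref{l: iso} to an analogous statement for the interior $\mathring K$, and then read off the equivalence by comparing the domains and ranges of two isomorphisms. Throughout I work in the common Hilbert space $L^2(K) = L^2(\mathring K)$ and abbreviate $A := H^{2k}(M)_K$ and $B := H^{2k}_0(\mathring K)$. Since every function that is $k$-harmonic in a neighbourhood of $K$ is in particular $k$-harmonic on $\mathring K$, we have $\bktwo(K) \subseteq \bktwo(\mathring K)$, hence $\bktwo(\mathring K)^\perp \subseteq \bktwo(K)^\perp$. Moreover $B \subseteq A$, because $C^\infty_c(\mathring K) \subseteq A$ and $A$ is closed in $H^{2k}(M)$. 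Lemma \ref{l: iso} already tells us that $\cL^k$ is a Banach space isomorphism of $A$ onto $\bktwo(K)^\perp$; in particular it is injective on $A$.

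The crux will be to prove the interior analogue
$$
\cL^k(B) = \bktwo(\mathring K)^\perp .
$$
The inclusion $\subseteq$ is the easy direction: for $\psi \in C^\infty_c(\mathring K)$ and $w \in \bktwo(\mathring K)$ (smooth by elliptic regularity) integration by parts gives $\int_{\mathring K} (\cL^k\psi)\,\OV w \wrt\mu = \int_{\mathring K} \psi\,\OV{\cL^k w}\wrt\mu = 0$, and I pass to the $H^{2k}$-closure to cover all $u \in B$. For the reverse inclusion I first check that $\cL^k(B)$ is dense in $\bktwo(\mathring K)^\perp$: if $g \in \bktwo(\mathring K)^\perp$ is orthogonal to $\cL^k(B)$, then testing against $C^\infty_c(\mathring K)$ and using the self-adjointness of $\cL^k$ shows $\cL^k g = 0$ in $\mathring K$, so $g \in \bktwo(\mathring K)$; hence $g \in \bktwo(\mathring K)\cap\bktwo(\mathring K)^\perp = \{0\}$. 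Finally, since $B$ is a closed subspace of $A$ and $\cL^k|_A$ is an isomorphism onto $\bktwo(K)^\perp$, the image $\cL^k(B)$ is closed; being dense in $\bktwo(\mathring K)^\perp$, it must equal it.

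With both isomorphisms in hand — $\cL^k(A) = \bktwo(K)^\perp$ and $\cL^k(B) = \bktwo(\mathring K)^\perp$ — the equivalence is purely formal. Because $B \subseteq A$ and $\cL^k$ is injective on $A$, one has $A = B$ if and only if $\cL^k(A) = \cL^k(B)$, that is, if and only if $\bktwo(K)^\perp = \bktwo(\mathring K)^\perp$; this is exactly \rmii $\Leftrightarrow$ \rmi. The step I expect to require the most care is the closedness of $\cL^k(B)$, since this is what upgrades mere density to the displayed identity and thereby makes the argument unconditional; it rests entirely on the isomorphism furnished by Lemma \ref{l: iso}. A secondary point to handle cleanly is the tacit identification $L^2(K) = L^2(\mathring K)$, so that the two orthogonal complements live in the same space, together with the elliptic-regularity facts used to pass between distributional and classical $k$-harmonicity.
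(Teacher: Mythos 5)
Your proposal is correct, and it reorganizes the paper's argument rather than reproducing it. The paper proves the two implications separately, each invoking its hypothesis directly: for \rmi$\,\Rightarrow\,$\rmii\ it shows (unconditionally) that $\cL^k\bigl(C^\infty_c(\mathring K)\bigr)$ is dense in $\bktwo(\mathring K)^\perp$ --- exactly your density step, by the same extend-by-zero, distributional, elliptic-regularity argument --- and then uses \rmi\ together with Lemma~\ref{l: iso} to transfer this density back to $H^{2k}(M)_K$; for \rmii$\,\Rightarrow\,$\rmi\ it takes $v$ in $\bktwo(K)^\perp$, solves $\cL^k u = \wt v$ with $u$ in $H^{2k}(M)_K$, and runs an integration-by-parts computation along the approximating sequence supplied by \rmii\ --- which is precisely your inclusion $\cL^k\bigl(H^{2k}_0(\mathring K)\bigr)\subseteq \bktwo(\mathring K)^\perp$ specialized to that particular $u$. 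So your two technical computations coincide with the paper's. What is genuinely new in your write-up is the observation that $\cL^k\bigl(H^{2k}_0(\mathring K)\bigr)$ is \emph{closed}, because $H^{2k}_0(\mathring K)$ is a closed subspace of $H^{2k}(M)_K$ and $\cL^k$ restricted to the latter is a Banach space isomorphism by Lemma~\ref{l: iso}; this upgrades the density to the unconditional identity $\cL^k\bigl(H^{2k}_0(\mathring K)\bigr)=\bktwo(\mathring K)^\perp$, an ``interior analogue'' of Lemma~\ref{l: iso} that the paper never states, after which both implications follow by pure algebra (injectivity of $\cL^k$ on $H^{2k}(M)_K$ plus comparison of images). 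Your route buys symmetry and a reusable unconditional statement; the paper's route is slightly shorter and never needs the closedness observation. One caveat you share with the paper: the identification of $L^2(K)$ with $L^2(\mathring K)$, needed so that the two orthogonal complements live in the same Hilbert space, is tacit in both arguments --- you at least flag it explicitly.
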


\begin{proof}
First we prove that \rmi\ implies \rmii.  
Clearly $H^{2k}_0(\mathring K)\subseteq H^{2k}(M)_K$, so that it 
suffices to prove
the inclusion $H^{2k}(M)_K\subseteq H^{2k}_0(\mathring K)$, 
equivalently that $C^\infty_c(\mathring K)$ is dense in  $H^{2k}(M)_K$. 
By Lemma~\ref{l: iso}, this is equivalent to the density of
$\cL^k(C^\infty_c(\mathring K))$ in 
$\bktwo(K)^\perp=\bktwo(\mathring K)^\perp$, i.e., that 
the orthogonal space to $\cL^k(C^\infty_c(\mathring K))$ in 
$\bktwo(\mathring K)^\perp$ is the null space.

Suppose that $f$ is a function in $\bktwo(\mathring K)^\perp$ that 
is orthogonal to $\cL^k \phi$, for every $\phi$ in $C^\infty_c(\mathring K)$. 
Denote by $\wt f$ the extension of $f$ to a function on $M$ 
which vanishes in $(\mathring K)^c$, and consider the distribution $\cL^k \wt f$. 
Then
$$
\prodo{\phi}{\cL^k \wt f}
= \prodo{\cL^k \phi}{\wt f} 
= 0 
\quant \phi\in C^\infty_c(\mathring K).
$$
Thus, $\cL^k \wt f=0$ in $\mathring K$, so that $f$ belongs to $
\bktwo(\mathring K)$.
But this implies that $f=0$ for $f$ is in  
$\bktwo(\mathring K)^\perp\cap \bktwo(\mathring K)$.

Next we prove that \rmii\ implies \rmi. 
Observe that the obvious inclusion $\bktwo(K)\subseteq 
\bktwo(\mathring K)$
implies the containment
$\bktwo(\mathring K)^\perp\subseteq \bktwo(K)^\perp$.
Thus, it suffices to show that the assumption  
$H^{2k} (M)_K=H^{2k}_0(\mathring K)$ implies
$$
\bktwo(K)^\perp\subseteq \bktwo(\mathring K)^\perp.
$$ 
Suppose that $v$ is in $\bktwo(K)^\perp$.  Write $\wt v$ for the 
extension of $v$ to $M$ which vanishes in $K^c$. By Lemma~\ref{l: iso} 
there exists $u$ in $H^{2k}(M)_K$ such that $\cL^k u = \wt v$. 
The assumption $H^{2k}(M)_K=H^{2k}_0(\mathring K)$ implies the existence of a 
sequence $\{\phi_n\}$ of functions in $C^\infty_c(\mathring K)$ that is 
convergent to $u$ in $H^{2k}(M)$.  Then, if for every $f$ in $\bktwo(\mathring K)$ we denote by $\tilde{f}$ its extension to $M$ that vanishes off $K$,
\begin{align*}
\int_{\mathring K} v\, \, f  \wrt \mu 
 &= \int_M \wt v\, \, \wt f \wrt \mu 
 = \int_M \cL^k u\, \, \wt f \wrt \mu \\ 
& = \lim_{n\to \infty} \int_M \cL^k \phi_n\, \, \wt f \wrt \mu 
 = \lim_{n\to \infty} \prodo{\phi_n}{\cL^k \wt f},
\end{align*} 
which vanishes because the support of $\phi_n$ is contained in 
$\mathring K$ and $\cL^k \wt f=0$ in $\mathring K$. 
Therefore $v$ is in $\bktwo(\mathring K)^\perp$, as required.
\end{proof}

\noindent
The result above raises the following question: which compact 
subsets $K$ of $M$
satisfy the requirement $H^{2k} (M)_K=H^{2k}_0(\mathring K)$? 
In the case where $K=\overline{\mathring K}$ is a domain whose 
boundary
is a smooth $(n-1)$-dimensional manifold, the spaces $H^{2k}
(M)_K$ and $H^{2k}_0(\mathring K)$ coincide by a well known 
result of
Lions and Magenes \cite[Theorem~2, p. 259]{E}.  More generally, 
we may use a version
of the segment condition for manifolds (see \cite[Theorem~5.29, 
p.~159]{AF}).  
Unfortunately, this is 
not very useful in the setting of Riemannian manifolds, for the 
boundary of
geodesic balls may contain even cusps (think of the elementary 
example of
a cilynder in $\BR^3$).  However, it is a classical fact that if $r < 
\Inj_p$,
then the boundary of $B(p,r)$ is a smooth $(n-1)$-dimensional 
submanifold of $M$,
a fact that will be used without further comment in the sequel.  

\noindent
Note that for every nonnegative integer $k$ and every
open ball $B$, we have the orthogonal decompositions
$$
\ld{B}
=  \QBkperp + {\QBk}
=  \Qkperp{\OVB} + \OV{\Qk{\OVB}}.
$$
In fact, these decompositions coincide, at least for all $B$ such 
that
$r_B < \Inj_{c_B}$, as the following result shows.

\begin{proposition} \label{p: structure CHhperp}
Suppose that $B$ is an open ball in $M$, that $r_B < \Inj_{c_B}$ 
and 
that~$k$ is a positive integer.  The following hold:
\begin{enumerate}
\item[\itemno1]
$
\OV{\Qk{\OV B}}
= \Qk{B};
$
\item[\itemno2]
$\Qk{B}$ is the set of all $v$ in $\ld{B}$ such that there exists a
sequence $\{v_n\}$ of global $k$-quasi-harmonic functions such 
that 
$$
\lim_{n\to\infty} \int_B \bigmod{v-v_n}^2 \wrt \mu = 0.
$$  
\end{enumerate}
\end{proposition}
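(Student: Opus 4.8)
The plan is to prove $\rmi$ first and then deduce $\rmii$ from it together with the Walsh--Pfluger--Lax--Malgrange lemma. In $\rmi$ the inclusion $\OV{\Qk{\OV B}} \subseteq \Qk{B}$ is free: one already knows $\Qk{\OV B} \subseteq \Qk{B}$, and $\Qk{B}$ is closed in $\ld{B}$. So the real content is the density of $\Qk{\OV B}$ in $\Qk{B}$, and my strategy is to establish first the analogous statement for Bergman spaces, $\OV{\bktwo(\OV B)} = \bktwo(B)$, and then to upgrade it to the quasi-harmonic setting using the one-dimensional complement.

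For the Bergman density I would exploit that $r_B < \Inj_{c_B}$ forces $\partial B$ to be a smooth $(n-1)$-dimensional submanifold. Taking $K = \OV B$, so that $\mathring K = B$, the Lions--Magenes theorem \cite[Theorem~2, p.~259]{E} then gives $H^{2k}(M)_{\OV B} = H^{2k}_0(B)$. By Theorem~\ref{t: bergman-sobolev} this is equivalent to $\bktwo(\OV B)^\perp = \bktwo(B)^\perp$ inside $\ld{B}$; passing to orthogonal complements and using that $\bktwo(B)$ is closed yields $\OV{\bktwo(\OV B)} = \bktwo(B)$. To move from $\bktwo$ to $\Qk{}$ I would fix, via Theorem~\ref{t: sol} with the constant datum $1$, a function $u_0 \in q_k(M)$ with $\cL^k u_0 = 1$; it is a particular solution on both $B$ and $\OV B$, so $\Qk{B} = \bktwo(B) \oplus \BC\,u_0|_B$ and $\Qk{\OV B} = \bktwo(\OV B) \oplus \BC\,u_0|_{\OV B}$. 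Writing $v = h + c\,u_0|_B \in \Qk{B}$ with $h \in \bktwo(B)$ and choosing $h_n \in \bktwo(\OV B)$ with $h_n \to h$ in $\ld{B}$, the functions $h_n + c\,u_0|_{\OV B} \in \Qk{\OV B}$ converge to $v$ in $\ld{B}$, which proves $\rmi$.

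For $\rmii$, let $Q$ be the set of $v\in\ld{B}$ that are $\ld{B}$-limits of restrictions of functions in $q_k(M)$. One inclusion is immediate: such restrictions lie in $\Qk{B}$, which is closed, so $Q \subseteq \Qk{B}$. For the reverse, by $\rmi$ it is enough to approximate each $v \in \Qk{\OV B}$ by global $k$-quasi-harmonic functions in $\ld{B}$. I would extend $v$ to $\wt v$ with $\cL^k \wt v = c$ near $\OV B$, pick $u_0 \in q_k(M)$ with $\cL^k u_0 = c$, and note that $\wt v - u_0$ is then $k$-harmonic near $\OV B$. Since $M$ is connected and noncompact, removing a ball does not disconnect it and $M \setminus \OV B$ is unbounded, so $\OV B$ has no holes and Lemma~\ref{l: BB} applies: for each $\vep>0$ there is $h$ with $\cL^k h = 0$ on $M$ and $\sup_{\OV B}|\wt v - u_0 - h| < \vep$. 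Then $u_0 + h \in q_k(M)$, $\sup_{\OV B}|v - (u_0+h)| < \vep$, and hence $\int_B |v-(u_0+h)|^2 \wrt\mu \le \vep^2\,\mu(B)$; letting $\vep\to 0$ gives $v \in Q$.

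The hard part, and the place where the hypothesis $r_B < \Inj_{c_B}$ is genuinely used, is the Bergman-space density $\OV{\bktwo(\OV B)} = \bktwo(B)$: the smoothness of $\partial B$ is exactly what makes Lions--Magenes applicable and lets Theorem~\ref{t: bergman-sobolev} convert the Sobolev identity into the density statement, while the hole-free property of $\OV B$ is what makes Lemma~\ref{l: BB} available for $\rmii$. By contrast, the rank-one passage from $\bktwo$ to $\Qk{}$ and the upgrade from uniform to $\ld{B}$ approximation are routine bookkeeping.
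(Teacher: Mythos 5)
Your argument reproduces the paper's own proof in all essentials: the same Bergman-space reduction for \rmi\ (smoothness of $\partial B$ from $r_B<\Inj_{c_B}$, Lions--Magenes, Theorem~\ref{t: bergman-sobolev}, then passage to orthogonal complements using that $\bktwo(B)$ is closed), the same rank-one correction by a global solution of $\cL^k u_0=1$ supplied by Theorem~\ref{t: sol}, and for \rmii\ the same reduction to approximating elements of $\Qk{\OV B}$, followed by Lemma~\ref{l: BB} and the trivial passage from uniform to $\ld{B}$ approximation. All of those steps are correct.

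The one genuine flaw is your justification that $\OV B$ has no holes. The claim ``since $M$ is connected and noncompact, removing a ball does not disconnect it'' is false for general balls: on the flat cylinder $S^1\times\BR$, a ball whose radius exceeds half the circumference contains a full cross-section, and its complement has two (unbounded) components; worse, on a manifold consisting of a large sphere joined by a thin neck to a half-infinite cylinder, a ball centred on the neck and wide enough to contain a cross-sectional collar leaves a \emph{bounded} component, i.e.\ a genuine hole. What excludes this here is precisely the hypothesis $r_B<\Inj_{c_B}$, which you never invoke at this step: choosing $\rho\in(r_B,\Inj_{c_B})$, the annulus $B(c_B,\rho)\setminus\OV B$ is the diffeomorphic image under $\exp_{c_B}$ of a Euclidean annulus, hence connected; it accumulates at every point of $\partial\OV B$, and every point of $M\setminus\OV B$ lying in $B(c_B,\rho)$ belongs to it. Hence any component of $M\setminus\OV B$ other than the one containing this annulus would have empty boundary (its boundary is necessarily contained in $\partial\OV B$), which is impossible in a connected manifold; the complement is therefore connected, and it is unbounded since $M$ is noncompact and complete. (In both counterexamples above the offending balls have radius exceeding the injectivity radius at their centres.) Accordingly, your closing assessment that $r_B<\Inj_{c_B}$ is ``genuinely used'' only in the Bergman-space density is inaccurate: it is equally what makes Lemma~\ref{l: BB} available. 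The paper asserts the no-holes property without proof, but it does not derive it from the false general principle you invoke; once you replace your justification by the injectivity-radius argument above (or simply cite the fact, as the paper does), your proof is complete.
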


\begin{proof}
To prove \rmi\ we first prove that if $v$ is in $\OV{\Qk{\OV B}}$, 
then $v$ is smooth
on $B$ and $\cL^k v$ is constant therein, i.e., $v$ is in $\Qk{B}$.
Indeed, there exists a sequence $\{v_j\}$ of functions in $C^\infty_c(M)$, such that 
${\cL^k}v_j$ is constant in a neighborhood of $\overline{B}$, that
converges to $v$ in $\ld{{B}}$. Then $\{{\cL^k}v_j\}$ converges to $\cL^k v$ in $\cD'(B)$
so that $\cL^k v$ is constant on $B$ in the sense of distributions,
and, by elliptic regularity, $v$ is smooth on $B$,  
as required.
\par
Conversely, suppose that $v$ is in $\ld{B}$ and that $\cL^k v = c$ 
on $B$ in the sense of distributions for some constant $c$. 
Then $v$ is smooth in $B$ by elliptic regularity.
Denote by $q_0$ a global $k$-quasi-harmonic function such that $
\cL^k q_0 = 1$
(such a function exists by Theorem~\ref{t: sol} above).   
Then the 
function $v-c \,   q_0$ is in the Bergman 
space $\bktwo(B)$.  
By 
Theorem~\ref{t: bergman-sobolev}, 
the Bergman space
$\bktwo(B)$ coincides with $\OV{\bktwo(\OV{B})}$.  Hence there 
exists a sequence 
$\{h_j\}$ of $k$-harmonic functions in neighbourhoods of $\OV{B}
$ 
such that 
$$
\lim_{j\to \infty} \bignorm{v-c\,  q_0 - h_j}{{\ld{B}}} 
= 0, 
$$
whence $\{h_j+c\,  q_0\}$  converges to $v$ in $
\ld{B}$, i.e., 
$v$ is in the closure of $\QOVBk$, as required. 

Next we prove \rmii.
Clearly, if $V$ is a function in $\ld{B}$ that may be approximated in
the $\ld{B}$-norm by a sequence of global $k$-quasi-harmonic 
functions, 
then it belongs to the closure of $\Qk{\OV B}$, which, by \rmi, is $
\Qk{B}$.

Conversely, suppose that $v$ is in $\Qk{B}$. Then, by \rmi, it may 
be approximated in
the $L^2$-norm by a sequence $\{u_n\}$ of $k$-quasi-harmonic 
functions in 
$\Qk{\OV B}$. 
Thus, it suffices to show that each of these functions may, in turn, 
be approximated 
in the $L^2(B)$-norm by global $k$-quasi-harmonic functions.  
Set $c_n := \cL^k u_n$.  Denote by $q$ a global $k$-quasi-harmonic function 
such that $\cL^k q = 1$ on $M$.  The function $u_n-cq$ is $k$-
harmonic in a 
neighbourhood of $\OV B$.  Since $\OV B$ has no holes, 
there exists by Lemma~\ref{l: BB} a global $k$-harmonic function 
$w_n$ such that 
$$
\bigmod{u_n-cq-w_n} < 2^{-n}
\qquad \hbox{in $\OV B$}.  
$$
The functions $v_n := w_n+cq$ are the required approximants of 
$v$.  
\end{proof}

\begin{remark}
We note explicitly that if $M$ is a Cartan--Hadamard manifold, 
then 
$$
\OV{\Qk{\OV B}}
= \Qk{B}.
$$
for every geodesic ball $B$. 
\end{remark}

\section{Background on Hardy-type spaces}
\label{s: Background material}

Let $M$ denote a connected $n$-dimensional Riemannian 
manifold
of infinite volume with Riemannian measure $\mu$.
In this section we gather some known facts about the Hardy space 
$\hu{M}$,
introduced by Carbonaro, Mauceri and Meda \cite{CMM1} in the 
setting of 
measured metric spaces of infinite volume (see also \cite{CMM2}
for the case of finite volume), and the Hardy-type spaces $\Xk(M)
$, introduced
in \cite{MMV1} and studied in \cite{MMV2}.  

\begin{definition} \label{def: bounded geometry}
We say that $M$ has $C^\ell$ \emph{bounded geometry} 
if the injectivity radius is positive and the following hold:
\begin{itemize}
\item{} 
if $\ell =0$, then  the Ricci tensor $\Ric$ is bounded from below;
\item{} 
if $\ell$ is positive, then the covariant
derivatives $\nabla^j \Ric$ of the Ricci tensor are uniformly
bounded on $M$ for all $j$ in $\{0,\ldots, \ell\}$.
\end{itemize}
\end{definition}

\begin{Standing assumptions} \label{Ba: on M}
Hereafter we make the following assumptions on $M$:
\begin{enumerate}
\item[\itemno1] $b>0$ ($b$ denotes the bottom of the $L^2$-
spectrum of $\cL$);
\item[\itemno2]
$M$ has $C^\ell$ bounded geometry for some nonnegative 
integer $\ell$. 
\end{enumerate}
\end{Standing assumptions}

\begin{remark} \label{rem: unif ball size cond}
Set $\be =
\limsup_{r\to\infty} \bigl[\log\mu\bigl(B(o,r)\bigr)\bigr]/(2r)$, 
where $o$ is any reference point of $M$
and $B(o,r)$ denotes the geodesic ball with centre $o$ and 
radius~$r$. 
By a result of Brooks $b\leq \be^2$ \cite{Br}.
It is well known that for manifolds with properties \rmi-\rmii\ above
there exist positive constants $\al$ and $C$ such that
\begin{equation} \label{f: volume growth} 
\mu\bigl(B(p,r)\bigr)
\leq C \, r^{\al} \, \e^{2\be \, r}
\quant r \in [1,\infty) \quant p \in M.  
\end{equation}
Furthermore \cite[Remark~2.3]{MMV2} there exists a positive 
constant $C$ such that
\begin{equation} \label{f: lower bound balls}
C^{-1}\,r^n
\leq \mu\bigl(B(p,r)\bigr)
\leq C\,r^n
\quant r\in(0,1] \quant p\in M \,.
\end{equation}
\end{remark}

\medskip
We denote by $\cB$ the family of all balls on $M$.
For each $B$ in $\cB$ we denote by $c_B$ and $r_B$
the centre and the radius of $B$ respectively.  
Furthermore, we denote by $c \, B$ the
ball with centre $c_B$ and radius $c \, r_B$.
For each \emph{scale parameter} $s$ in $\BR^+$, 
we denote by $\cB_s$ the family of all
balls $B$ in $\cB$ such that $r_B \leq s$.

We recall the definitions of the atomic Hardy space $H^1(M)$ and 
its
dual space $BMO(M)$ given in \cite{CMM1}.  We set $s_0:=(1/2)\, 
\Inj(M)$.

\begin{definition}
An $H^1$-\emph{atom} $a$
is a function in $L^2(M)$ supported in a ball $B$
with the following properties:
\begin{enumerate}
\item[\itemno1]
$\intop_B a \wrt \mu  = 0$;
\item[\itemno2]
$\norm{a}{2}  \leq \mu (B)^{-1/2}$.
\end{enumerate}
Given a positive ``scale parameter" $s$, we say that an $H^1$-atom $a$ is at scale $s$ if it is supported in a ball $b$ of $\cB_{s}$. An $H^1$-atom is called \emph{admissible} if it is supported 
in a ball $B$ of $\cB_{s_0}$. 
\end{definition}

\begin{definition} \label{def: H1}
The \emph{Hardy space} $\hu{M}$ is the 
space of all functions~$f$ in $\lu{M}$
that admit a decomposition of the form
\begin{equation} \label{f: decomposition}
f = \sum_{j=1}^\infty c_j \, a_j,
\end{equation}
where $a_j$ are admissible $H^1$-atoms,
and $\sum_{j=1}^\infty \mod{c_j} < \infty$.
The norm $\norm{f}{H^{1}}$
of $f$ is the infimum of $\sum_{j=1}^\infty \mod{c_j}$
over all decompositions \eqref{f: decomposition}  
of $f$.  

We denote with $\hufin{M}$ the vector space of all 
\emph{finite} linear combinations
of admissible $H^1$-atoms, endowed with the norm
$$  
\bignorm{f}{H^1_{\rm{fin}}}=\inf\Big\{\sum_{j=1}^N|c_j|:~f=
\sum_{j=1}^N c_j\,a_j,  
\hbox{\, $a_j$ admissible $H^1$-atom, $N\in\BN$}\Big\}\,.
$$  
\end{definition}  

\noindent
It is known that, under the Standing assumptions~\ref{Ba: on M}, 
the 
$H^1$-norm and the $H^1_{\mathrm{fin}}$-norm are equivalent on 
$\hufin{M}$ 
\cite[Section~4]{MM}.  
\begin{remark}\label{scaleinvH1}
Actually, in the definition of the spaces $H^1(M)$ and $H^1_{\mathrm{fin}}(M)$, the choice of scale is irrelevant. Indeed, in \cite{CMM1} it has been shown that in Definition~\ref{def: H1}  one obtains the same spaces, with equivalent norms, if  admissible atoms are replaced by atoms at any fixed scale $s$. 
\end{remark}
\begin{definition} \label{def: norm BMO}
We define $BMO(M)$ as the space of all locally integrable 
functions~$g$ such that 
$$
\bignorm{g}{BMO} 
:=  \sup_{B\in \cB_{s_0}} \inf_{c \in \BC} \Big(\frac{1}{\mu(B)}
\intop_B \mod{g-c}^2 \wrt\mu\Big)^{1/2} < \infty.  
$$
\end{definition}

\noindent
The Banach dual of $\hu{M}$ 
is isomorphic to $BMO(M)/\BC$ \cite[Thm~5.1]{CMM1}.

\medskip
Now we recall the definition of the \textit{generalised Hardy 
spaces} $\Xh{M}$.
For $\si>0$ denote by $\cU_\si$ the operator 
$\cL\, (\si \cI + \cL)^{-1}$. 
It is known that for every positive integer $k$ the operator $\cU_
\si^k$ 
is injective on $\lu{M}+\ld{M}$ \cite[Proposition~2.4~\rmii]{MMV1}.  

\begin{definition} \label{def: Hardy space}
For each positive integer $k$ and for each $\si>\be^2 -b$
we denote by $\Xh{M}$ the Banach space of all
$\lu{M}$ functions~$f$ such that $\cU_{\si}^{-k}f$ is in $\hu{M}$, 
endowed with the norm
$$
\bignorm{f}{\Xk}
= \bignorm{\cU_{\si}^{-k} f}{H^1}.
$$
\end{definition}

\medskip
\noindent
Clearly $\cU_{\si}^{-k}$ is
an isometric isomorphism between $\Xh{M}$ and $\hu{M}$. 
It is known \cite[Section~3]{MMV1} that the space $\Xh{M}$
does not depend on $\si> \be^2-b$, and that different
values of $\sigma$ give rise to equivalent norms on $\Xh{M}$. 
For later use, it is convenient to assume that $\si>2\be$, and we 
shall denote
$\cU_\si$ simply by $\cU$. 

\begin{definition} \label{def: BMO space}
For each positive integer $k$
we denote by $\Yh{M}$ the Banach dual of $\Xh{M}$.
\end{definition}

\begin{remark}
Since $\cU^{-k}$ is
an isometric isomorphism between $\Xh{M}$ and $\hu{M}$,
the transpose map~$\bigl(\cU^{-k}\bigr)^t$ is an isometric 
isomorphism 
between the dual of $\hu{M}$, i.e., $BMO(M)/\BC$, and $\Yh{M}$.
Hence
$$
\bignorm{\bigl(\cU^{-k}\bigr)^tf}{\Yk}
= \bignorm{f}{BMO/\BC}.
$$
\end{remark}

\noindent
Some properties of $\Xh{M}$ are listed in the introduction (see 
\cite{MMV1,MMV2}).  
The space $\Xh{M}$ admits an atomic decomposition in terms of
``special atoms'' \cite{MMV2}, which we now define.

\begin{definition}
Suppose that $k$ is a positive integer.
An $X^k$-\emph{atom} associated to
the ball $B$ is a function $A$ in $\ld{M}$, supported in $B$, such 
that
\begin{enumerate}
\item[\itemno1]
$\int A\, h\wrt \mu=0 \qquad\forall h\in \QOVBk$;
\item[\itemno2]
$\ds\norm{A}{2}\leq \mu(B)^{-1/2}$.
\end{enumerate}
Note that condition \rmi\ implies that 
$\int_M A\wrt\mu=0$, because $\One_{2B}$ is in $\QOVBk$. 
Given a positive ``scale parameter'' $s$, we say that 
an $X^k$-atom is at scale $s$ if it is 
supported 
in a ball $B$ of $\cB_s$.  As in the case of $H^1$, atoms at scale $s_0$ will 
simply be called \textit{admissible} $X^k$-atoms.
\end{definition}

\noindent
Observe that $X^k$-atoms satisfy an infinite dimensional 
cancellation 
condition.  In \cite{MMV2} we proved the following result.

\begin{theorem}  \label{t: at dec}
Suppose that $k$ is a positive integer and that $M$ has 
$C^{2k-2}$ bounded
geometry (see Definition~\ref{def: bounded geometry}).  Choose a 
``scale parameter'' $s$.  Then the space $\Xh{M}$
is the space of all functions $F$ in $\hu{M}$ that admit a 
decomposition of the form $F= \sum_j c_j\, A_j$, where $\{c_j\}$ is 
a sequence in $\ell^1$ and $\{A_j\}$ is a sequence 
of admissible $X^k$-atoms at scale $s$.  
Furthermore
$$
\norm{F}{X^k}
\asymp \inf\, \bigl\{\sum_{j} \mod{c_j}:  F = \sum_{j} c_j \, A_j,\quad
\rm{where\ }\hbox{$A_j$\ {\rm{are\ }}\   $X^k$-{\rm{atoms at scale 
$s$}} }
\bigr\}.
$$
\end{theorem}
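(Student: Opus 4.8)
The plan is to exploit the defining property (Definition~\ref{def: Hardy space}) that $\cU^k$ is an isometric isomorphism of $\hu{M}$ onto $\Xh{M}$, and to transport the atomic decomposition of $\hu{M}$ (Definition~\ref{def: H1}, together with the norm equivalences of \cite{CMM1, MM} and the scale invariance of Remark~\ref{scaleinvH1}) across $\cU^k$. Fixing the scale $s$ throughout, the statement reduces to two transfer estimates: \textbf{(A)} there is a constant $C$ with $\bignorm{A}{X^k}\le C$ for every $X^k$-atom $A$ at scale $s$; and \textbf{(B)} there is a constant $C$ such that for every $H^1$-atom $a$ at scale $s$ the function $\cU^ka$ decomposes as $\sum_i\gamma_iA_i$, with the $A_i$ being $X^k$-atoms at scale $s$ and $\sum_i\mod{\gamma_i}\le C$. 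Granting (A), any $F=\sum_jc_jA_j$ with $\sum_j\mod{c_j}<\infty$ satisfies $\bignorm{F}{X^k}\le\sum_j\mod{c_j}\,\bignorm{A_j}{X^k}\le C\sum_j\mod{c_j}$, giving one inclusion together with the bound $\bignorm{F}{X^k}\le C\inf\sum_j\mod{c_j}$. Granting (B), starting from $F\in\Xh{M}$ I would set $f:=\cU^{-k}F\in\hu{M}$, write $f=\sum_jc_ja_j$ with $H^1$-atoms at scale $s$ and $\sum_j\mod{c_j}\le 2\bignorm{f}{H^1}=2\bignorm{F}{X^k}$, and apply $\cU^k$ to obtain an $X^k$-atomic decomposition of $F$ with comparable norm.

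For statement (A) the Bergman-space machinery of Section~\ref{s: Quasi} is decisive. Let $A$ be an $X^k$-atom supported in a ball $B$. Its cancellation condition gives $A\perp\QOVBk$, hence in particular $A\in\bktwo(\OV B)^\perp$, so by Lemma~\ref{l: iso}~\rmii\ there is a unique $u\in H^{2k}(M)_{\OV B}$, supported in $\OV B$, with $\cL^ku=A$. Since $\cU^{-k}=(\cI+\si\,\cL^{-1})^k$ and $u\in\Dom(\cL^k)$, the spectral theorem gives
\[
\cU^{-k}A=(\cI+\si\,\cL^{-1})^k\cL^ku=(\cL+\si\,\cI)^ku .
\]
As $(\cL+\si\,\cI)^k$ is a differential operator of order $2k$, the right-hand side is again supported in $\OV B$, while $\bignorm{\cU^{-k}A}{2}\le C\,\norm{A}{2}\le C\,\mu(B)^{-1/2}$ simply because $\cU^{-k}$ is bounded on $\ld{M}$ (the spectral gap $b>0$ makes $\cU$ boundedly invertible). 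To see that $\cU^{-k}A$ has vanishing integral, observe that each $\cL^ju$ with $j\ge 1$ integrates to zero (the integral over $M$ of the Laplacian of a compactly supported function is zero), so $\int_M\cU^{-k}A\wrt\mu=\si^k\int_Mu\wrt\mu$; picking, by Theorem~\ref{t: sol}, a global $q_0$ with $\cL^kq_0=1$ — whose restriction to $\OV B$ lies in $\QOVBk$ — an integration by parts and the cancellation of $A$ give $\int_Mu\wrt\mu=\int_Mu\,\cL^kq_0\wrt\mu=\int_{\OV B}A\,q_0\wrt\mu=0$. Hence $\cU^{-k}A$ is a bounded multiple of an $H^1$-atom at scale $s$, and $\bignorm{A}{X^k}=\bignorm{\cU^{-k}A}{H^1}\le C$.

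The genuinely hard part is statement (B), and I expect it to be the main obstacle. In contrast to $\cU^{-k}$, the operator $\cU^k=[\cL(\si\cI+\cL)^{-1}]^k$ is nonlocal, so $\cU^ka$ is no longer compactly supported but spreads over all of $M$. To control this I would first establish exponentially decaying off-diagonal bounds for the Schwartz kernel of $\cU^k$; this is where the spectral gap $b>0$ and the $C^{2k-2}$ bounded geometry are essential, the latter furnishing the covariant-derivative estimates needed to keep the localized pieces in $\ld{M}$ with the correct size. I would then decompose $\cU^ka$ over $2B$ and over a covering of each geodesic annulus $2^{i+1}B\setminus 2^iB$ by balls of radius $s$, and on each piece subtract its orthogonal projection onto the appropriate space of quasi-harmonic functions, using Proposition~\ref{p: structure CHhperp} and Lemma~\ref{l: iso} to recognize the corrected pieces as honest $X^k$-atoms. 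The delicate point is the bookkeeping that guarantees the subtracted quasi-harmonic projections telescope so that the corrected atoms still sum exactly to $\cU^ka$; the exponential decay then renders the coefficients $\gamma_i$ summable with a geometric bound independent of $a$. This construction is carried out in detail in \cite{MMV2}.
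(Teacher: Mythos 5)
There is no in-paper proof to compare your attempt against: the paper introduces this theorem with ``In \cite{MMV2} we proved the following result'' and gives no argument, so the paper's ``proof'' is a citation. Your reduction --- transporting the atomic theory of $\hu{M}$ across the isometry $\cU^{-k}$, i.e.\ your estimates (A) and (B) --- is exactly the route of that reference and of the machinery this paper imports from it. Your proof of (A) is correct and self-contained: it rederives from Lemma~\ref{l: iso} the fact that the paper simply quotes (in Remark~\ref{r: XkUkH1fin}) as \cite[Remark~3.5]{MMV2}, namely that $\cU^{-k}A$ is a bounded multiple of an $H^1$-atom with the same support. The identity $\cU^{-k}A=(\cL+\si\,\cI)^k u$ with $u\in H^{2k}(M)_{\OV B}$, the locality of $(\cL+\si\,\cI)^k$, and the pairing against a global solution of $\cL^k q_0=1$ are all legitimate (since $u$ has compact support, the integration by parts is the same cutoff maneuver the paper uses in the proof of Theorem~\ref{t: main}); the only cosmetic slip is the constant $2$ in your decomposition of $f=\cU^{-k}F$ into atoms at scale $s$, which should be a constant depending on $s$ via Remark~\ref{scaleinvH1}.

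For (B) you give only a strategy and then defer to \cite{MMV2}; that is literally what the paper does for the whole theorem, and your sketch is consistent with the ingredients the paper does record: Lemma~\ref{l: Uka} (adapted from \cite[Lemma~4.2]{MMV2}) splits $\cU^k a$ into pieces $A_i'$, $A_j''$ lying in $q_k^2(B_i')^\perp$ and $q_k^2(B_j'')^\perp$ with $4^{-i}$, respectively $\e^{-2\be j}$, decay, and Lemma~\ref{l: economical decomposition I} together with Remark~\ref{scale inv} converts those large-radius pieces into admissible atoms with summable coefficients. Two points where the actual construction is simpler than your sketch: the localization comes from finite propagation speed applied to $\cR^k=r^k(\cD_1)$ in the factorization $\cU^k=\cL^k\cR^k$, not from off-diagonal kernel bounds; and no delicate ``telescoping'' of subtracted quasi-harmonic projections is needed, because each piece is manufactured as $\cL^k$ of a compactly supported function and is therefore automatically orthogonal to the relevant space of quasi-harmonic functions. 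In short: your attempt contains no error and follows the same approach as the source; its only incompleteness --- outsourcing (B) --- is shared by the paper itself.
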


\noindent
Notice that the equivalence of norms above implies that 
``atomic norms'' associated to different ``scale parameters'' $s_1$ 
and $s_2$
are equivalent on $\Xh{M}$.  
As in the definition of $H^1(M)$,  a convenient choice of the scale 
parameter is 
$s_0 := (1/2)\, \Inj(M)$.   This choice of the scale parameter 
will simplify
most of the arguments below, for balls of radius at most~$s_0$ 
have no holes and their boundaries are smooth, 
whence the theory developed in Sections~\ref{s: Quasi} 
and \ref{s: Background material} applies.  
In particular, in view of Proposition~\ref{p: structure CHhperp}
the cancellation condition of an $X^k$-atom $A$  
associated to a ball $B$ may be described in one of the 
following equivalent ways:
\begin{itemize}
\item[(a)]\quad  $\int_B A\, v \wrt\mu=0 
\qquad\forall v\in q_k^2(\OV B)$;
\item[(b)]\quad  $\int_B A\, v \wrt\mu=0 \qquad\forall v\in q_k^2(B)$;
\item[(c)]\quad $\int_B A\, v \wrt\mu=0 \qquad\forall v\in q_k(M)$.
\end{itemize} 

We remark also that $\Xu(M)$ admits an atomic decomposition in 
terms of $X^1$-atoms 
whenever $M$ satisfies mild geometric assumptions, i.e. $M$ has 
positive injectivity
radius, Ricci curvature bounded from below and spectral gap, 
whereas 
if $k\geq 2$, then the atomic decomposition of $\Xh{M}$ requires 
at 
least $C^2$ bounded geometry. 

Next we introduce a norm on the space of finite linear 
combinations
of admissible $X^k$-atoms.  

\begin{definition} \label{def: Xfin}
Suppose that $k$ is a positive integer.   
We denote by $\Xhfin{M}$ the vector space of all finite linear 
combinations
of admissible $X^k$-atoms, endowed with the 
norm 
$$
\bignorm{F}{\Xkfin}
:= \inf \, \Big\{\sum_{j=1}^N \mod{c_j}: F = \sum_{j=1}^N c_j  A_j,
\hbox{$A_j$ admissible $X^k$-atom} \Big\}.
$$
\end{definition}

\begin{remark}\label{scale inv} By combining \cite[Remark 3.5]
{MMV2} with the proof of \cite[Lemma~6.1]{MMV2} one can see 
that any $X^k$-atom $A$ at scale $s>s_0$ can be written as a finite 
linear combination $A=\sum_{j=1}^{N(s)}\lambda_j A_j$ of  admissible $X^k$-atoms 
$A_j$, with $\sum\mod{\lambda_j}\le Cs$. Thus, if in the definition of $\Xhfin{M}$ we replace admissible $X^k$-atom by $X^k$-atoms at any fixed scale $s$, we obtain the same space with an equivalent norm.
\end{remark}

\begin{remark}\label{r: XkUkH1fin}
Notice that $\Xhfin{M}$ is contained in $\cU^k\bigl(\hufin{M}\bigr)$. 
Indeed, for any admissible $X^k$-atom $A$, $\cU^{-k}A$ is 
a multiple of an admissible $H^1$-atom by \cite[Remark 3.5]
{MMV2}. 
Hence, $\cU^{-k}A$ lies in $\hufin{M}$. 
It follows that $A=\cU^{k}\cU^{-k}A$ belongs to 
$\cU^k\bigl(\hufin{M}\bigr)$. 
\end{remark}

\noindent
Clearly
$$
\bignorm{F}{\Xk} \leq \bignorm{F}{\Xkfin}
\quant F \in \Xhfin{M},
$$
so that there is a natural injection of the completion
of $\Xhfin{M}$ into $\Xh{M}$.  We shall show that
this map is an isomorphism of Banach spaces.  

We need a slight variant of the 
``economical decomposition of atoms" proved in \cite[Lemma 6.1]
{MMV2}.

\begin{lemma} \label{l: economical decomposition I}
Suppose that $k$ is a positive integer and that 
$M$ has $C^{2k-2}$ bounded geometry.
If  $a$ is an $H^1$-atom in $\Dom(\cL^k)$, then $\cL^k a$ is in $
\Xh{M}$.
Furthermore, if the support of $a$ is contained  in the ball $B$, 
then there exists a constant $C$ such that
$$  
\bignorm{\cL^k a}{\Xkfin} 
\leq C\, (1+ \,r_B)\, \mu(B)^{1/2}\, \bignorm{\cL^k a}{2}.
$$  
\end{lemma}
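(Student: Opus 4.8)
The plan is to exhibit $\cL^k a$ as a scalar multiple of a single $X^k$-atom associated with the ball $B$, and then to pass from that (possibly large-scale) atom to a finite combination of \emph{admissible} atoms by means of the scale invariance recorded in Remark~\ref{scale inv}; the weight $(1+r_B)$ will turn out to be exactly the cost of this last step.

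First I would check that $\cL^k a$ has the two defining features of an $X^k$-atom for $B$. Since $a$ lies in $\Dom(\cL^k)$, the function $\cL^k a$ belongs to $\ld{M}$. Because $a$ is supported in $B$ and $\cL^k$ is a local differential operator, for every $\phi$ in $C^\infty_c(M\setminus\OV B)$ self-adjointness of $\cL^k$ gives $\int_M(\cL^k a)\,\phi\wrt\mu=\int_M a\,\cL^k\phi\wrt\mu=0$, so that $\cL^k a$ is supported in $\OV B$. For the cancellation, fix $v$ in $\Qk{\OV B}$ and let $\wt v$ be a $k$-quasi-harmonic function, $\cL^k\wt v=c$, on an open neighbourhood of $\OV B$ restricting to $v$ there; choosing $\chi\in C^\infty_c(M)$ equal to $1$ near $\OV B$, the function $\chi\wt v$ lies in $C^\infty_c(M)\subseteq\Dom(\cL^k)$, agrees with $v$ on $\OV B$, and satisfies $\cL^k(\chi\wt v)=c$ on $B$ by locality, whence
\[
\int_B(\cL^k a)\,v\wrt\mu=\int_M(\cL^k a)\,\chi\wt v\wrt\mu=\int_M a\,\cL^k(\chi\wt v)\wrt\mu=c\int_M a\wrt\mu=0,
\]
the final equality being the mean-zero property of the $H^1$-atom $a$. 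Hence $\cL^k a$ satisfies the cancellation condition of an $X^k$-atom associated with $B$.

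Next I would normalise. Assuming $\cL^k a\neq0$ (otherwise the statement is trivial), set
\[
A:=\frac{\mu(B)^{-1/2}}{\bignorm{\cL^k a}{2}}\,\cL^k a,\qquad\text{so that}\qquad \cL^k a=\mu(B)^{1/2}\,\bignorm{\cL^k a}{2}\,A.
\]
By the previous step $A$ is an $X^k$-atom supported in $\OV B$, i.e.\ an atom at scale $r_B$, with $\norm{A}{2}=\mu(B)^{-1/2}$. If $r_B\le s_0$ then $A$ is admissible and $\bignorm{A}{\Xkfin}\le1$; if $r_B>s_0$, then Remark~\ref{scale inv} writes $A=\sum_j\lambda_j A_j$ as a finite combination of admissible atoms with $\sum_j\mod{\lambda_j}\le C\,r_B$, so $\bignorm{A}{\Xkfin}\le C\,r_B$. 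In either case $\bignorm{A}{\Xkfin}\le C(1+r_B)$, and multiplying by the normalising constant yields
\[
\bignorm{\cL^k a}{\Xkfin}\le\mu(B)^{1/2}\,\bignorm{\cL^k a}{2}\,\bignorm{A}{\Xkfin}\le C(1+r_B)\,\mu(B)^{1/2}\,\bignorm{\cL^k a}{2}.
\]
In particular $\cL^k a$ is a finite linear combination of admissible $X^k$-atoms, hence lies in $\Xhfin{M}\subseteq\Xh{M}$, which also establishes the first assertion.

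The only genuinely delicate point is the cancellation identity: an element of $\Qk{\OV B}$ is a priori defined only on $\OV B$, so one cannot integrate by parts against it directly, and it is the cut-off $\chi$ together with the locality of the elliptic operator $\cL^k$ that makes the pairing legitimate and produces the constant value $c$ on $B$. Everything else is bookkeeping, and the factor $(1+r_B)$ is dictated entirely by the linear-in-scale bound $\sum_j\mod{\lambda_j}\le C\,r_B$ supplied by Remark~\ref{scale inv}.
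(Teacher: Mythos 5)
Your proposal is correct, but its organisation differs from the paper's proof, which is a one-line deferral: the paper simply asserts that the argument of \cite[Lemma~6.1]{MMV2} (the ``economical decomposition of atoms'') already yields the stronger $\Xkfin$-norm bound stated here. You instead reconstruct the result inside the paper: you verify directly --- using locality of the distributional $\cL^k$, self-adjointness, and the mean-zero property of the $H^1$-atom $a$ --- that $\cL^k a$ is $\mu(B)^{1/2}\,\bignorm{\cL^k a}{2}$ times a single $X^k$-atom at scale $r_B$, and you then invoke Remark~\ref{scale inv} to convert that large-scale atom into a finite combination of admissible atoms at cost $C\,r_B$, which is exactly where the factor $(1+r_B)$ comes from. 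This is not circular within the paper, since Remark~\ref{scale inv} precedes the lemma and is justified independently (by \cite[Remark~3.5]{MMV2} together with the proof of \cite[Lemma~6.1]{MMV2}); still, both your route and the paper's ultimately rest on the same external ingredient from \cite{MMV2}, so what your proof buys is transparency --- it isolates the new content (that $\cL^k a$ is itself a multiple of one atom, via the clean pairing computation) from the imported content (scale reduction at linear cost). Two small points to tighten: the cut-off $\chi$ must be chosen with support inside the neighbourhood $\Omega$ on which $\wt v$ is defined and quasi-harmonic, otherwise $\chi\wt v$ is not a well-defined element of $C^\infty_c(M)$; and your normalised function $A$ is supported in $\OV B$ rather than in the open ball $B$, which is harmless and indeed is the paper's own convention (compare the proof of Lemma~\ref{l: estimate of Xkfin norm}).
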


\begin{proof} 
It is straightforward to check that the proof of \cite[Lemma~6.1]
{MMV2}
proves the stronger statement above.  
\end{proof}

In the following lemma and elsewhere we shall identify functions in $q_k^2(B)^\perp$ with their extensions to $M$ that vanish outside $B$.
\begin{lemma} \label{l: estimate of Xkfin norm}
Suppose that $k$ is a positive integer and that 
$M$ has $C^{2k-2}$ bounded geometry. 
There exists a constant $C$ such that for every ball $B$
and every $F$ in $\QBkperp$
$$
\bignorm{F}{\Xkfin}
\leq C\, (1+r_B) \, \mu(B)^{1/2} \, \bignorm{F}{2}.
$$
\end{lemma}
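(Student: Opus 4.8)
I want to bound the $\Xkfin$-norm of any $F$ in $\QBkperp$ (extended by zero off $B$) by $C(1+r_B)\mu(B)^{1/2}\norm{F}{2}$.

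The plan is to represent every $F$ in $\QBkperp$ as $\cL^k a$ for a function $a$ in $\Dom(\cL^k)$ that is supported in $B$ and is a constant multiple of an $H^1$-atom, and then to quote Lemma~\ref{l: economical decomposition I}. The point is that the bound in that lemma, namely $\bignorm{\cL^k a}{\Xkfin}\le C(1+r_B)\,\mu(B)^{1/2}\,\bignorm{\cL^k a}{2}$, is homogeneous in $a$ and estimates its left-hand side by $\norm{\cL^k a}{2}=\norm{F}{2}$ rather than by $\norm{a}{2}$; so once such an $a$ is produced, the desired inequality for $F$ follows with a constant that does not see the size of $\norm{a}{2}$.

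To build $a$ I would use the splitting $\QBk=\bktwo(B)\oplus\BC\,g_0$, where $g_0:=(\cL^{-k}\One_B)\big|_B$ (this makes sense because the spectral gap renders $\cL^{-k}$ bounded on $\ld{M}$). Accordingly $F\in\QBkperp$ is equivalent to the two conditions $F\perp\bktwo(B)$ and $F\perp g_0$. The first places $F$ in $\bktwo(B)^\perp\subseteq\bktwo(\OV B)^\perp$, the inclusion holding since $\bktwo(\OV B)\subseteq\bktwo(B)$; hence Lemma~\ref{l: iso}\,\rmii, used with $K=\OV B$, yields $a:=\cL^{-k}\wt F$ in $H^{2k}(M)_{\OV B}\subseteq\Dom(\cL^k)$ with $\cL^k a=\wt F$, where $\wt F$ denotes the extension of $F$ by zero (identified with $F$). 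Since $\partial B$ is $\mu$-null, $a$ is supported in $B$.

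The step I expect to be the crux is turning the second orthogonality $F\perp g_0$ into the cancellation that makes $a$ an atom. Using that $F$ is supported in $B$, that $\cL^{-k}$ is self-adjoint on $\ld{M}$, and that $\One_B$ is real,
\begin{align*}
0
&=\int_B F\,\overline{g_0}\wrt\mu
=\bigprodo{\wt F}{\cL^{-k}\One_B} \\
&=\bigprodo{\cL^{-k}\wt F}{\One_B}
=\int_B a\wrt\mu .
\end{align*}
Thus $\int_B a\wrt\mu=0$, so (discarding the trivial case $F=0$) $\wh a:=a/\bigl(\mu(B)^{1/2}\norm{a}{2}\bigr)$ is a genuine $H^1$-atom, lying in $\Dom(\cL^k)$ and supported in $B$. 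In other words, the single ``extra'' source direction $g_0$ pairs, through self-adjointness of $\cL^{-k}$, with the constant datum $\One_B$ on the range side, and this is exactly what promotes the Bergman-orthogonality delivered by Lemma~\ref{l: iso} to full atomic cancellation.

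It then remains to apply Lemma~\ref{l: economical decomposition I} to $\wh a$ and undo the scaling, obtaining
\begin{align*}
\bignorm{F}{\Xkfin}
=\bignorm{\cL^k a}{\Xkfin}
&\le C\,(1+r_B)\,\mu(B)^{1/2}\,\bignorm{\cL^k a}{2} \\
&=C\,(1+r_B)\,\mu(B)^{1/2}\,\bignorm{F}{2},
\end{align*}
which is the claim. (Alternatively, one could observe directly that $\mu(B)^{-1/2}\norm{F}{2}^{-1}F$ is an $X^k$-atom at scale $r_B$, since $F\perp\QBk\supseteq\QOVBk$, and invoke Remark~\ref{scale inv} to split it into admissible atoms; this reproves the estimate but forces a case distinction at $r_B=s_0$.)
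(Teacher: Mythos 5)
Your proof is correct, and its endgame coincides with the paper's: both arguments reduce the claim to Lemma~\ref{l: economical decomposition I} by producing a function $a$ in $\Dom(\cL^k)$, supported in $\OV B$, which is a multiple of an $H^1$-atom and satisfies $\cL^k a = F$ up to normalization, and both exploit the key point you highlight, namely that the bound in that lemma is in terms of $\bignorm{\cL^k a}{2}$ rather than $\norm{a}{2}$, so normalizing constants are irrelevant. The difference is in how the intermediate fact is justified. The paper normalizes $F$ to an $X^k$-atom $A$ and asserts in one line that $a:=\cL^{-k}A/\opnorm{\cL^{-k}}{2}$ is an $H^1$-atom supported in $\OV B$, a fact imported from \cite[Remark~3.5]{MMV2} (the same citation invoked in Remark~\ref{r: XkUkH1fin} and in the proof of Proposition~\ref{p: cont iota}). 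You instead re-derive that fact from results internal to this paper: the support and regularity of $a=\cL^{-k}F$ come from Lemma~\ref{l: iso}~\rmii\ applied to $K=\OV B$ (via the inclusion $\bktwo(B)^\perp\subseteq \bktwo(\OV B)^\perp$), and the cancellation $\int_B a\wrt\mu=0$ comes from the orthogonality of $F$ to the summand $\BC\,(\cL^{-k}\One_B)\big|_B$ in the decomposition $\Qk{B}=\bktwo(B)\oplus\BC\,(\cL^{-k}\One_B)\big|_B$ of Section~\ref{s: Quasi}, transferred through the self-adjointness of $\cL^{-k}$. What your route buys is self-containedness, since it avoids the external reference except through Lemma~\ref{l: economical decomposition I} itself; what the paper's route buys is brevity. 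Your parenthetical alternative (normalize $F$ to an $X^k$-atom at scale $r_B$ and decompose it via Remark~\ref{scale inv}) is also sound. Both your argument and the paper's share the same harmless imprecision that $a$ is supported in $\OV B$ rather than in the open ball, which is immaterial since $\partial B$ is $\mu$-null.
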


\begin{proof}
The function $A:= F/\mu(B)^{1/2} \, \bignorm{F}{2}$ is an $X^k$-
atom
with support contained in $\OV B$.  Hence $a:= \cL^{-k}A/
\bigopnorm{\cL^{-k}}{2}$
is a $H^1$-atom with support contained in $\OV B$.  Note that 
$a$ is in $\Dom(\cL^k)$.   By Lemma~\ref{l: economical 
decomposition I}
$$
\bignorm{\cL^k a}{\Xkfin}
\leq C\, (1+r_B) \, \mu(B)^{1/2} \, \bignorm{\cL^k a}{2},
$$
from which the required estimate follows directly.
\end{proof}

We also need the following result, which provides a 
``nice'' decomposition of $\cU^ka$ for an admissible $H^1$-atom 
$a$ 
in terms of admissible $X^k$-atoms.

\begin{lemma} \label{l: Uka}  
Suppose that $k$ is a positive integer and that
$M$ has $C^{2k-2}$ bounded geometry.
Let $a$ be an admissible $H^{1}$-atom supported in a ball 
$B(p,R)$, 
where $p$ is in $M$ and $R\leq s_0$.  Then there exist a positive 
constant $C$, 
functions $A_i'$ and $A_j''$ such that
$$
\cU^k a= \sum_{i=0}^d A_i'+\sum_{j=1}^{\infty} A_j'',
$$
where $d=[\!\![\log_{4}(3/R)+1]\!\!]$, the series converges in $
\Xh{M}$ and in $L^2(M)$, and
\begin{itemize}
\item[(i)] the function $A_i'$ is supported in $B_i'=B\big(p,(4^i+1)R
\big)$, 
lies in $q^2_k(B_i')^\perp$ and
$$
\bignorm{A_i'}{2}\leq C\,\mu(B_i')^{-1/2}\,4^{-i}\,;
$$
\item[(ii)] the function $A_j''$ is supported in 
$B_j''=B\big(p,j+1\big)$, lies in $q^2_k(B_j'')^\perp$ and 
$$
\bignorm{A_j''}{2}\leq C\,\e^{-2\beta j}.
$$
Moreover, $A_j''$ lies in $\Xhfin{M}$ and there exist 
positive constants $C$ and $\vep$, such that
\begin{equation}\label{ecodec} 
\bignorm{A_j''}{ X^k_{ {\rm{fin}}} }\leq C\,\e^{-\vep j}.
\end{equation} 
\end{itemize}
\end{lemma}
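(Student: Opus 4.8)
The plan is to follow the annular decomposition scheme of \cite[Lemma~6.1]{MMV2}, but to organise the cancellation in a way adapted to the structure of $\cU^k$. Write $\cU^k a = \cL^k w$, where $w := (\si\cI+\cL)^{-k}a$; since $a$ is in $\ld{M}$ with compact support and the bottom of the spectrum is positive, $w$ is a well-defined $\ld{M}$ function, and the off-diagonal estimates for the resolvent kernel of $(\si\cI+\cL)^{-k}$ from \cite{MMV1,MMV2} show that $w$ together with its covariant derivatives decays exponentially away from $B(p,R)$, at a rate that (because $\si>2\be$) beats the volume growth \eqref{f: volume growth}. The decisive structural observation is that if $\phi$ is supported compactly in a ball $B$, then $\cL^k(\phi w)$ is automatically orthogonal to every $k$-harmonic function on $B$ (move $\cL^k$ onto the harmonic function, with no boundary terms), so that $\cL^k(\phi w)$ lies in $\Qk{B}^{\perp}$ \emph{if and only if} the single scalar moment $\int_M \phi\, w \wrt \mu$ vanishes. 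Thus the infinite-dimensional cancellation condition collapses to one linear constraint per piece.

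With this in hand I would fix nested cutoffs $\eta_i$, equal to $1$ on $B(p,4^iR)$ and supported in $B_i'=B(p,(4^i+1)R)$ for $0\le i\le d$, and then pass to unit-scale cutoffs on the balls $B_j''=B(p,j+1)$, and form the telescoping sum $\cU^k a=\cL^k w=\sum_i \cL^k\big[(\eta_i-\eta_{i-1})w\big]+\cdots$. To enforce the moment constraint I would subtract from each bump a correction $M_ib_i-M_{i-1}b_{i-1}$, where $b_i$ is a fixed normalised bump in the $i$-th annulus and $M_i:=\int_M \eta_i\, w\wrt\mu$; each resulting piece $A_i'$ (resp. $A_j''$) then has vanishing moment, hence lies in $\Qk{B_i'}^{\perp}$ (resp. $\Qk{B_j''}^{\perp}$) and is supported in the correct ball, while the corrections telescope. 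The point that makes the corrections disappear in the limit is that $\int_M w\wrt\mu=0$: this is exactly where the mean-zero property of the $H^1$-atom $a$ enters, since formally $\int_M w\wrt\mu=\si^{-k}\int_M a\wrt\mu=0$, a computation I would justify rigorously using the exponential decay of the resolvent kernel. Consequently $M_i\to 0$ exponentially and the leftover correction vanishes in $\ld{M}$ and in $\Xk$.

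The required $L^2$ bounds come from the kernel estimates via a Leibniz expansion of $\cL^k(\phi_i w)$: for $1\le i\le d$ the mean-zero property of $a$ produces a gain of size $d(y,p)/d(x,p)\sim 4^{-i}$ in the kernel difference, yielding $\norm{A_i'}{2}\le C\,\mu(B_i')^{-1/2}\,4^{-i}$ (the case $i=0$ being just the $\ld{M}$-boundedness of $\cU^k$), while on the far balls the exponential decay of the kernel gives $\norm{A_j''}{2}\le C\,\e^{-2\be j}$. Granting these, membership in $\Qk{\cdot}^{\perp}$ turns each piece into a multiple of an $X^k$-atom: $A_i'$ is $C\,4^{-i}$ times an atom and, using \eqref{f: volume growth}, $A_j''$ is at most $C\,(j+1)^{\al/2}\e^{-\be j}$ times an atom, so both series converge absolutely in $\Xk$ and in $\ld{M}$ and reconstruct $\cU^k a$. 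Finally, the $\Xkfin$ estimate \eqref{ecodec} follows by applying Lemma~\ref{l: estimate of Xkfin norm} to $A_j''\in\Qk{B_j''}^{\perp}$ with $r_{B_j''}=j+1$ and inserting $\norm{A_j''}{2}\le C\e^{-2\be j}$ together with $\mu(B_j'')^{1/2}\le C(j+1)^{\al/2}\e^{\be(j+1)}$ from \eqref{f: volume growth}; the factor $j+2$ and the polynomial are absorbed to give $\norm{A_j''}{X^k_{\mathrm{fin}}}\le C\e^{-\vep j}$ for any $\vep<\be$, which also shows $A_j''\in\Xkfin{M}$.

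I expect the main obstacle to be the telescoping construction of the second paragraph: arranging the moment corrections so that \emph{every} summand is at once supported in its prescribed ball, has vanishing moment (hence lies in the correct $\Qk{\cdot}^{\perp}$), and still obeys the sharp annular $L^2$ decay, while the corrections cancel in the limit so that the series genuinely sums to $\cU^k a$. The delicate quantitative input underlying this — the exponential off-diagonal decay of $\cU^k$ with rate exceeding $2\be$, and the cancellation-induced $4^{-i}$ gain on the inner annuli — is an adaptation of the kernel estimates already established in \cite{MMV1,MMV2}, and is precisely what forces the standing hypotheses $\si>2\be$ and $C^{2k-2}$ bounded geometry.
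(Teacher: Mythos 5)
Your structural observation is correct, and it is in fact the very mechanism the paper exploits: since $\cL^k$ maps functions supported in $\OV B$ into $h^2_k(\OV B)^\perp$ (Lemma~\ref{l: iso}), a piece of the form $\cL^k(\hbox{function supported in } B)$ lies in $\Qk{B}^\perp$ exactly when that function has vanishing integral; in the paper's proof the $A_j''$ are $\cL^k a_j$ with $a_j$ suitable multiples of $H^1$-atoms, which is this observation verbatim, and both arguments share the starting point $\cU^k a=\cL^k(\cR^k a)$. The divergence, and the genuine gap, lies in how the zero-moment pieces are produced. In your telescoping scheme the correction $\cL^k(M_i b_i)$ must be counted inside $A_i'$, and on the inner scales its size is incompatible with the bound in (i). Indeed $M_i=-\int_M(1-\eta_i)\,w\wrt\mu$ is in general of size $R$: the mean-zero gain gives $\mod{w}\lesssim R\cdot(\hbox{integrable kernel})$, and the integral of this bound over the complement of $B(p,4^iR)$ is comparable to $R$, with no further cancellation available. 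Moreover no clever choice of correction can help: if $\psi$ is supported in $B_i'$ with $\int_M\psi\wrt\mu=M_i$, then pairing $\cL^k\psi$ against $q_0-h$, where $\cL^k q_0=1$ and $h$ is any function in $h^2_k(B_i')$, gives $\mod{M_i}\leq\bignorm{\cL^k\psi}{2}\,\mathrm{dist}_{L^2(B_i')}\bigl(q_0,h^2_k(B_i')\bigr)$, and by elliptic scaling that distance is $\asymp(4^iR)^{2k}\mu(B_i')^{1/2}$; hence every admissible correction costs at least $c\,R\,(4^iR)^{-2k}\mu(B_i')^{-1/2}$, which exceeds the target $4^{-i}\mu(B_i')^{-1/2}$ by the factor $(4^iR)^{1-2k}\geq(4^iR)^{-1}$, unbounded when $4^iR\ll 1$. (Your cutoff main terms $\cL^k\bigl[(\eta_i-\eta_{i-1})w\bigr]$ do satisfy (i), so the failure sits exactly at the corrections — the obstacle you yourself flagged but did not resolve.) The loss is not cosmetic: with the best achievable corrected pieces, $\sum_i\bignorm{A_i'}{2}\,\mu(B_i')^{1/2}\gtrsim R^{1-2k}$, so the uniform-over-atoms estimates downstream (Lemma~\ref{l: dual of Xhfin}~\rmiii, Proposition~\ref{p: consH1capL2}~\rmii) would fail.

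The paper avoids the problem by never having to correct moments at all. Following \cite[Lemma~4.2]{MMV2}, one writes $\cR^k=r^k(\cD_1)$ with $r(t)=(c^2+t^2)^{-1}$ and $\cD_1=\sqrt{\cL-b+\kappa^2}$, and decomposes $\cR^k a=\int\widehat{r^k}(t)\cos(t\cD_1)a\wrt t$ by cutting the $t$-integral into the ranges $\mod{t}\lesssim 4^iR$ and $\mod{t}\in[j,j+1)$. Finite propagation speed of the wave group gives the support conditions, the exponential decay of $\widehat{r^k}$ (this is where $\si>2\be$ enters) gives the $L^2$ bounds, and each piece has vanishing integral \emph{automatically}, because $\cos(t\cD_1)$ maps constants to multiples of constants, so $\int_M\cos(t\cD_1)a\wrt\mu$ is proportional to $\int_M a\wrt\mu=0$; thus the pieces are genuine multiples of $H^1$-atoms and their $\cL^k$-images satisfy (i) and (ii) with no competing correction terms. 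Your remaining steps do match the paper's: the $X^k_{\mathrm{fin}}$ estimate \eqref{ecodec} via Lemma~\ref{l: estimate of Xkfin norm} together with \eqref{f: volume growth}, and the summability of the two series in $\Xh{M}$ and $\ld{M}$. A small side remark: your identity $\int_M w\wrt\mu=0$ is best justified by stochastic completeness, i.e.\ $(\si\cI+\cL)^{-k}\One=\si^{-k}\One$ (valid since $\Ric$ is bounded below), rather than by kernel decay alone, which only makes the Fubini step legitimate.
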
  

\begin{proof} 
We write $\cU^k=\cL^k\cR^k$ and 
$\cR^k=r^k(\cD_1)$, where $r(t)=\frac{1}{c^2+t^2}$ and 
$\cD_1=\sqrt{\cL-b+\kappa^2}$. We proceed 
as in the proof of \cite[Lemma 4.2]{MMV2} to construct the 
functions 
$A_i'$ and $A_j''$ with the required properties. 
The functions $A_j''$ are defined as $\cL^k a_j$, where $a_j$ are 
suitable 
multiple of $H^1$-atoms. 
Then the estimate \eqref{ecodec} of the norm of $A_j''$ may be 
obtained by arguing 
as in \cite[Lemma 6.1]{MMV2}. 
\end{proof}

\section{The dual of Hardy-type spaces}
\label{s: dual of H-t s}  
 
In this section we prove our main result, which identifies the dual 
of $\Xh{M}$
to a Banach space of functions on $M$.  
We need more notation and some preliminary results.  
For any open ball $B$ in $M$, 
we denote by 
$\tilde{\pi}_{B,k}: \ld{B} \to \QBkperp$ 
the orthogonal 
projection
onto $\QBkperp$.  We may extend $\tilde{\pi}_{B,k}$ to a 
map $\PBk $ from $\ldloc{M}$
to $\QBkperp$, by setting
$$
\PBk (F) := \tilde{\pi}_{B,k}(F_{|_B})
\quant F \in \ldloc{M}.
$$
where $F_{|_B}$ denotes the restriction of $F$ to $B$.
\begin{proposition} \label{p: global qhfunctions}
Suppose that $s>0$, $G$ is in $\ldloc{M}$
and $\pi_{B,k}(G) = 0$ for every $B$ in $\cB_s$.  Then $G$ is a 
global
$k$-quasi-harmonic function, i.e. it belongs to $q_k(M)$.   
\end{proposition}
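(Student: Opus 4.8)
The plan is to reinterpret the vanishing of each projection $\pi_{B,k}(G)$ as the statement that $\cL^k G$ is constant on $B$, and then to patch these local constants into a single global one using the connectedness of $M$. The whole argument is local-to-global in spirit, and the only input from the rest of the paper is the closedness of the Bergman-type spaces together with elliptic regularity.

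First I would unwind the hypothesis. Fix a ball $B$ in $\cB_s$. Since $G$ is in $\ldloc{M}$ and $B$ is bounded, the restriction $G_{|_B}$ lies in $\ld{B}$. By construction $\pi_{B,k}(G) = \tilde{\pi}_{B,k}(G_{|_B})$ is the orthogonal projection of $G_{|_B}$ onto $\QBkperp$, so the assumption $\pi_{B,k}(G)=0$ says exactly that $G_{|_B}$ is orthogonal to $\QBkperp$. Because $\Qk{B}$ is a closed subspace of $\ld{B}$ (as observed in Section~\ref{s: Quasi}), the orthogonal complement of $\QBkperp$ in $\ld{B}$ is $\Qk{B}$ itself, and hence $G_{|_B}$ belongs to $\Qk{B}$. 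In other words $G_{|_B}$ is $k$-quasi-harmonic on $B$: there is a constant $\gamma_B$ with $\cL^k G = \gamma_B$ on $B$, in the sense of distributions.

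Next I would patch these constants together. The distribution $\cL^k G$ is globally defined on $M$, and since $\cL^k$ is a local (differential) operator the identity $\cL^k G = \gamma_B$ genuinely holds on the open set $B$ for each $B$ in $\cB_s$. If $B_1, B_2 \in \cB_s$ overlap, then $\gamma_{B_1} = \gamma_{B_2}$, since both equal the distribution $\cL^k G$ on the nonempty open set $B_1\cap B_2$. As $M$ is connected and is covered by the balls of $\cB_s$, any two members of $\cB_s$ are joined by a finite chain of pairwise overlapping balls of $\cB_s$; propagating the equality along such a chain forces all the constants $\gamma_B$ to equal a single constant $c$. Therefore $\cL^k G = c$ on all of $M$ in the sense of distributions, and since $\cL^k$ is elliptic with smooth right-hand side, elliptic regularity yields $G \in C^\infty(M)$. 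This is precisely the statement that $G$ belongs to $q_k(M)$.

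I do not expect a serious obstacle here. The substantive content is the equivalence ``$\pi_{B,k}(G)=0$ if and only if $G_{|_B}\in\Qk{B}$'', which rests on the closedness of $\Qk{B}$ already established, together with the elementary connectedness argument that upgrades local constancy of $\cL^k G$ to global constancy. The one point that deserves a little care is to carry out the patching at the level of distributions rather than pointwise, which is why I would record the local identities $\cL^k G = \gamma_B$ on $B$ as distributional identities before comparing them on overlaps.
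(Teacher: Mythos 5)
Your proof is correct and follows essentially the same route as the paper's: you identify the vanishing of $\pi_{B,k}(G)$ with the membership $G_{|_B}\in \Qk{B}$ (via closedness of $\Qk{B}$), deduce that $\cL^k G$ is locally constant, and then use connectedness of $M$ to globalize the constant. Your version merely spells out a few details the paper leaves implicit (the chain argument and the final appeal to elliptic regularity), but the substance is identical.
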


\begin{proof}
Observe that $\pi_{B,k} (G) = \tilde{\pi}_{B,k}(G_{|_B})$.
Hence, $G_{|_B}$ is orthogonal to $\QBkperp$, i.e., it belongs 
to $\QBk$.   
Thus, $\cL^k G$ is constant on $B$.  
Now, if $B$ and $B'$ are two balls in $\cB_s$ with nonempty
intersection, $\cL^k G$ is constant both on $B$ and on $B'$,
whence the constant must be the same for the two balls.  
Since $M$ is connected by assumption, $\cL^k G$ is constant on 
$M$, i.e.
it is in $q_k(M)$, as required.
\end{proof}

\begin{definition}  \label{def: the space GBMO}
Suppose that $k$ is a positive integer and $s>0$.   Then 
$GBMO_s^k(M)$
is the vector space of all functions $G$ in $\ldloc{M}$ such that 
$$
\bignorm{G}{GBMO_s^k}
:= \sup_{B\in \cB_s} \mu(B)^{-1/2} \, \bignorm{\pi_{B,k} (G)}{2}
<\infty.
$$
\end{definition}

\noindent
Note that if $r_B < \Inj(M)$, then  by Proposition \ref{p: structure CHhperp} \rmii
$$
\bignorm{\pi_{B,k} (G)}{2} 
= \inf_{V\in q_k(M)} \Big[\int_B \bigmod{G-V}^2 \wrt \mu 
\Big]^{1/2}.
$$
Loosely speaking, if $s< \Inj(M)$, then the space $GBMO_s^k(M)$ 
consists of all 
locally square-integrable functions $G$,
which are ``well approximated'' on each ball $B$ in $\cB_s$
by global $k$-quasi-harmonic functions.
If we interpret constants as
$0$-quasi-harmonic functions, we may say that 
$BMO(M)$ functions are  those locally square-integrable 
functions,
which are ``well approximated'' on each ball $B$ in $\cB_s$
by $0$-quasi-harmonic functions.  Thus, functions in 
$GBMO_s^k(M)$ 
may be considered as generalisations of functions in $BMO(M)$, 
a fact which 
partially justifies the notation.  
\par

Henceforth, we shall consider the spaces $GBMO_s^k(M)$ only for $s<\Inj(M)$ and we shall write $GBMO^k(M)$ instead of $GBMO_{s_0}^k(M)$, where $s_0=\frac{1}{2}\Inj(M)$.  We shall prove later that \emph{  if $s$ is less than $\Inj(M)$} then the spaces $GBMO_s^k(M)$ do not depend on $s$ and that all the norms $\norm{\cdot}{GBM_s^k}$, $0<s<\Inj(M)$, are equivalent (see Corollary \ref{}).

\par
Obviously, $\norm{\cdot}{GBMO_s^k}$ vanishes on $q_k(M)$ and, 
by Proposition~\ref{p: global qhfunctions}, 
it defines a norm on the quotient space $GBMO_s^k(M)/q_k(M)$.  
Note that if $k\le \ell$ then a function $G$ in $GBMO_s^k(M)$ is 
also in $GBMO_s^\ell(M)$, 
for 
\begin{equation}\label{l,k}
\bignorm{\pi_{B,\ell} (G)}{2} 
\leq \bignorm{\pi_{B,k} (G)}{2}.
\end{equation}
In particular, any representative of a class in $BMO(M)/\BC$,
represents also a class in $GBMO_s^\ell(M)/q_\ell(M)$.  
\par

The main result of this section (Theorem \ref{t: main} below) is that 
the dual $\Yh{M}$ of $\Xh{M}$ can be identified with 
$GBMO^k(M)/q_k(M)$ via the map $\iota$ that to each 
coset $G+q_k(M)$ in $GBMO^k(M)/q_k(M)$ 
associates the functional $\iota(G+ q_k(M))$ on $\Xhfin{M}$ 
defined  by
\begin{equation} \label{f: def of iota on atoms}
\iota(G+ q_k(M)) (F)
:= \int_M F \, G \wrt \mu
\quant F \in \Xhfin{M}.
\end{equation}
It is straightforward to check that the integral above does not 
change
if we replace~$G$ by any other representative of the coset $G
+q_k(M)$.
At this point it is by no means clear that the functional 
$\iota(G+ q_k(M))$ extends to a continuous linear functional on $
\Xh{M}$.
We shall prove that this is indeed the case and 
that $\iota$ extends to a Banach space isomorphism between 
$GBMO^k(M)/q_k(M)$
and $\Yh{M}$ (see Theorem~\ref{t: main}).

\noindent
To prove this result it is useful to introduce another space 
that will play also a role in the characterization of the dual of $X^k_{\mathrm{fin}}(M)$ in the next section.

\begin{definition} \label{def: the space Y}
Suppose that $k$ is a positive integer. 
We define $\bYh{M}$ to be the space of all families of functions
$\bG := \{G_B: B\in\cB\}$ such that 
\begin{enumerate}
\item[\itemno1]
$G_B$ is in $\QBkperp$ and 
$\Pik{B}(G_{B'}) = G_B$ for all $B,B'\in\cB$ such that $B\subset 
B'$;
\item[]
\item[\itemno2]
$\displaystyle{
\norm{\bG}{\bYk}
:=  \sup_{B\in \cB_{s_0}} \Big(\frac{1}{\mu(B)}\int_B|G_B|^2 \wrt
\mu\Big)^{1/2}
< \infty.
}$
\end{enumerate}
\end{definition}

\noindent
It is straightforward to check that if $G$ is in $GBMO^k(M)$, then 
the
collection $\bG := \{\pi_{B,k}(G): B \in \cB\}$ is in $\bYk(M)$, and 
$$
\norm{\bG}{\bYk} = \norm{G}{GBMO^k}.
$$
Conversely, given $\bG = \{G_B: B \in \cB\}$ 
in $\bYk(M)$, it is not clear \textit{a priori} 
whether there exists $G$ in $GBMO^k(M)$ such that $G_B = 
\pi_{B,k}(G)$ for 
every $B$ in $\cB$.  In Corollary~\ref{c: G and bG} we shall prove 
that this is 
indeed the case, following a somewhat long detour.  
It would be nice to have a more direct proof of this fact.

\begin{definition}
Given a function $h:\cB \rightarrow \BC$ and a complex number $
\al$,  
we say that $\lim_{ B } h(B)=\al$ if for every $\vep>0$ 
there exists a ball $B_{\vep}$ such that 
$$
|h(B)-\alpha|<\vep \qquad \forall B\in\cB ~{\rm{such~that~}} 
B_{\vep}\subset B\,.
$$  
\end{definition}

\noindent
Fix a reference point $o$ in $M$, and, 
for every positive integer $m$, denote by $B_m$ the ball with 
centre $o$
and radius $m$.

\begin{lemma} \label{l: dual of Xhfin}
Suppose that $k$ is a positive integer and that $M$ has 
$C^{2k-2}$ bounded geometry. 
The following hold:
\begin{enumerate}
\item[\itemno1]
for every ${\bG}$ in $\bYk(M)$ the linear functional $\la_{{\bG}}$ 
on $\Xhfin{M}$, 
defined by 
\begin{equation} \label{f: lambdasuXfin}
\la_{\bG}(F)
= \lim_{B} \int_M F \, G_B \wrt \mu
\quant F \in \Xhfin{M},
\end{equation}
is continuous on $\Xhfin{M}$ and 
$ \norm{\la_{\bG}}{(X^k_{\rm{fin}})^*} 
\leq \norm{{\bG}}{\bYk}$;
\item[\itemno2]
there exists
a positive constant $C$ such that for every $B$ in $\cB$ and 
for every $\bG$ in $\bYk(M)$ 
$$
\Big(\frac{1}{\mu(B)}\int_B|G_B|^2\wrt\mu\Big)^{1/2} 
\leq C  \bignorm{\bG}{\bYk}  (1+r_B);
$$
\item[\itemno3]
for every admissible $H^1$-atom $a$ and for every $\bG$ in $
\bYh{M}$, the limit
$$
\la(\cU^k a)
:= \lim_{m\rightarrow \infty } (\cU^k a, G_{B_m})
$$ 
exists.  Furthermore, there exists a positive constant $C$ such 
that 
$$
\mod{\la(\cU^k a)}
\leq C\, \norm{\bG}{\bYk};  
$$ 
\item[\itemno4]
for every $\bG$ in $\bYk(M)$ the linear functional $\la_{\bG}$
on $\Xhfin{M}$, defined in~\rmi,
extends to a continuous linear functional on $\Xh{M}$, and there 
exists
a constant $C$, independent of $\bG$, such that
$\norm{\la_{\bG}}{(X^k)^*} \leq C\,\norm{\bG}{\bYk}$;
\item[\itemno5]
for every $G$ in $GBMO^k(M)$ the linear functional $\iota(G+ 
q_k(M))$
on $\Xhfin{M}$, defined by
$$
\iota \big(G+ q_k(M) \big)(F)
= \int_M F \, G \wrt \mu
\quant F \in \Xhfin{M},
$$
extends to a unique continuous linear functional on $\Xh{M}$, and 
the
map $\iota: GBMO^k(M)/q_k(M) \to \Yk(M)$ that associates to the
coset $G+ q_k(M)$ the extension of $\iota \big(G+ q_k(M) \big)$
described above is a continuous linear map. 
\end{enumerate}
\end{lemma}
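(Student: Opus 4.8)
The plan is to prove the five assertions in order, each resting on its predecessors, with \rmiii\ as the analytic heart. The workhorse throughout is the following pairing identity: if $A \in \ld{M}$ is supported in a ball $B$ and belongs to $\Qk{B}^\perp$, then for every ball $B'$ containing $B$
$$
\int_M A\, G_{B'} \wrt\mu
= \int_B A\, \Pik{B}(G_{B'}) \wrt\mu
= \int_B A\, G_B \wrt\mu .
$$
The first equality holds because $A$ is orthogonal to $\Qk{B}$ and $\Pik{B}$ acts as the orthogonal projection of $\ld{B}$ onto $\Qk{B}^\perp$; the second is the compatibility relation $\Pik{B}(G_{B'}) = G_B$ of Definition~\ref{def: the space Y}~\rmi. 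In particular the pairing is independent of the enclosing ball $B'$. Every admissible $X^k$-atom supported in $B$ meets the hypotheses on $A$, by the equivalent form (b) of the cancellation condition, and so does each $G_B$ itself, by Definition~\ref{def: the space Y}~\rmi.

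For \rmi\ and \rmii, writing $F = \sum_j c_j A_j \in \Xhfin{M}$ with $A_j$ an admissible atom supported in $B_j \in \cB_{s_0}$, the identity gives $\int_M F\, G_B \wrt\mu = \sum_j c_j \int_{B_j} A_j\, G_{B_j} \wrt\mu$ for every $B \supseteq \bigcup_j B_j$; hence the limit defining $\la_{\bG}(F)$ exists and equals this value. Bounding each term by Cauchy--Schwarz, with $\norm{A_j}{2} \le \mu(B_j)^{-1/2}$ and $\norm{G_{B_j}}{2} \le \mu(B_j)^{1/2}\norm{\bG}{\bYk}$ (as $B_j \in \cB_{s_0}$), yields $\bigmod{\la_{\bG}(F)} \le \norm{\bG}{\bYk}\sum_j \mod{c_j}$, and taking the infimum over decompositions proves \rmi. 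For \rmii\ I would test $\la_{\bG}$ on $F = G_B$: since $G_B \in \Qk{B}^\perp$, Lemma~\ref{l: estimate of Xkfin norm} places $G_B$ in $\Xhfin{M}$ with $\norm{G_B}{\Xkfin} \le C(1+r_B)\mu(B)^{1/2}\norm{G_B}{2}$, while the identity gives $\la_{\bG}(G_B) = \norm{G_B}{2}^2$. Combining this with \rmi\ yields $\norm{G_B}{2}^2 \le C(1+r_B)\mu(B)^{1/2}\norm{G_B}{2}\norm{\bG}{\bYk}$, which is \rmii\ after dividing through.

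Part \rmiii\ is where the real work lies, since $\cU^k a$ is not compactly supported. I would insert the decomposition $\cU^k a = \sum_{i=0}^d A_i' + \sum_{j\ge1} A_j''$ of Lemma~\ref{l: Uka}, convergent in $\ld{M}$, and pair it with $G_{B_m}$. As $A_i'$ (resp.\ $A_j''$) is supported in the fixed ball $B_i'$ (resp.\ $B_j''$) and lies in $\Qk{B_i'}^\perp$ (resp.\ $\Qk{B_j''}^\perp$), the pairing identity freezes $(A_i',G_{B_m}) = (A_i',G_{B_i'})$ and $(A_j'',G_{B_m}) = (A_j'',G_{B_j''})$ once $B_m$ contains the relevant ball. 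Setting $S_m := (\cU^k a, G_{B_m})$ and letting $J_m$ be the largest $j$ with $B_j'' \subseteq B_m$, I would split, for $m$ large,
$$
S_m = \sum_{i=0}^d (A_i',G_{B_i'}) + \sum_{j=1}^{J_m}(A_j'',G_{B_j''}) + \Bigl(\sum_{j>J_m} A_j'',\, G_{B_m}\Bigr).
$$
The first two pieces converge as $m\to\infty$ (the $j$-series is absolutely summable, being dominated via \rmi\ by $\sum_j \norm{A_j''}{\Xkfin} \le C\sum_j \e^{-\vep j}$), and the remainder tends to $0$: by Cauchy--Schwarz it is at most $\bignorm{\sum_{j>J_m}A_j''}{2}\norm{G_{B_m}}{2} \le C\,\e^{-2\be J_m}(1+m)\mu(B_m)^{1/2}\norm{\bG}{\bYk}$, and since $J_m \sim m$ the decay $\e^{-2\be J_m}$ defeats the growth $\mu(B_m)^{1/2} \lesssim m^{\al/2}\e^{\be m}$ of \eqref{f: volume growth}. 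This is the main obstacle: confirming that the exponential atomic decay outweighs the exponential volume growth along the exhaustion. For the bound $\mod{\la(\cU^k a)} \le C\norm{\bG}{\bYk}$ I would estimate the two surviving sums separately: the $A_j''$-sum by $C\sum_j \e^{-\vep j}\norm{\bG}{\bYk}$ as above, and the $A_i'$-sum by Cauchy--Schwarz using $\norm{A_i'}{2} \le C\mu(B_i')^{-1/2}4^{-i}$ together with \rmii, reducing to the elementary fact that $\sum_{i=0}^d 4^{-i}(1+r_{B_i'}) \le C$ uniformly in $R \le s_0$ (here $r_{B_i'} = (4^i+1)R$ and $R\,d \lesssim R\log(1/R)$ stays bounded).

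Finally, for \rmiv\ and \rmv, given $F \in \Xhfin{M}$ I would set $f := \cU^{-k}F$, which lies in $\hufin{M}$ by Remark~\ref{r: XkUkH1fin}, write $f = \sum_l c_l a_l$ as a finite sum of admissible $H^1$-atoms, and use $F = \sum_l c_l \cU^k a_l$ with \rmiii\ to get $\la_{\bG}(F) = \lim_m (F,G_{B_m}) = \sum_l c_l \la(\cU^k a_l)$ and hence $\bigmod{\la_{\bG}(F)} \le C\norm{\bG}{\bYk}\sum_l \mod{c_l}$. Taking the infimum over such decompositions and invoking the equivalence of the $H^1$- and $H^1_{\mathrm{fin}}$-norms gives $\bigmod{\la_{\bG}(F)} \le C\norm{\bG}{\bYk}\norm{f}{H^1} = C\norm{\bG}{\bYk}\norm{F}{\Xk}$; since $\Xhfin{M}$ is dense in $\Xh{M}$ (by the atomic decomposition, Theorem~\ref{t: at dec}), $\la_{\bG}$ extends uniquely to a functional on $\Xh{M}$ with $\norm{\la_{\bG}}{(X^k)^*} \le C\norm{\bG}{\bYk}$. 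For \rmv, given $G \in GBMO^k(M)$ I would take $\bG := \{\Pik{B}(G): B\in\cB\}$, which lies in $\bYk(M)$ with $\norm{\bG}{\bYk} = \norm{G}{GBMO^k}$; the pairing identity shows $\int_M F\,G\wrt\mu = \la_{\bG}(F)$ for every $F \in \Xhfin{M}$, so $\iota(G+q_k(M))$ coincides on $\Xhfin{M}$ with $\la_{\bG}$ and inherits from \rmiv\ a unique continuous extension to $\Xh{M}$ with $\norm{\iota(G+q_k(M))}{\Yk} \le C\norm{G}{GBMO^k}$. Linearity of $\iota$ and its continuity on the quotient $GBMO^k(M)/q_k(M)$ then follow immediately.
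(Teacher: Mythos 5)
Your proposal is correct and follows essentially the same route as the paper's proof: the same compatibility/projection identity, the same decomposition of $\cU^k a$ from Lemma~\ref{l: Uka} combined with the volume-growth bound \eqref{f: volume growth} in part \rmiii, and the same reduction to $H^1$ theory (via the equivalence of the $H^1$- and $H^1_{\mathrm{fin}}$-norms) in parts \rmiv\ and \rmv. The only deviations are cosmetic: in \rmii\ you test against the extremal element $G_B$ instead of taking a supremum over unit vectors of $q_k^2(B)^\perp$, and in \rmiii\ you split the series at $J_m$ and bound the tail directly, where the paper organizes the same estimates as a dominated-convergence argument over the same three regimes.
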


\begin{proof}
First we prove \rmi.  Note that the limit 
$$
\lim_{B} \int_M F \,\, G_B \wrt \mu
\quant F \in \Xhfin{M}
$$
exists, for the support of $F$ is contained in a ball.
Thus, $\la_{\bG}(F)$ is well defined.  
Suppose that $A$ is an $X^k$-atom with support contained in 
a ball $B$ in $\cB_{s_0}$.  We have that
$$
\begin{aligned}
\mod{\la_{\bG}(A)}
& =    \Bigmod{\int_B A\, G_B \wrt \mu} \\
& \leq \bignorm{A}{2} \, \bignorm{ G_B}{2}  \\
& \leq  \mu(B)^{-1/2}\,  \bignorm{ G_B}{2} \\
& \leq \bignorm{{\bG}}{\bYk}.
\end{aligned}
$$
Therefore, if $F=\sum_{j=1}^N c_j A_j$ is in $\Xhfin{M}$, then
$$
\mod{\la_{\bG}(F)}
\leq   \bignorm{{\bG}}{\bYk} \,\sum_{j=1}^N \mod{c_j}.
$$
We now take the infimum of both sides with respect to all finite 
representations of~$F$, and obtain 
$$
\mod{\la_{\bG}(F)}
\leq  \bignorm{{\bG}}{\bYk} \bignorm{F}{\Xkfin} 
\quant F \in \Xhfin{M},
$$
as required to conclude the proof of \rmi.

Next we prove \rmii.
If $r_B \leq s_0$, then the required estimate follows directly
from the definition of the space $\bYk(M)$.

Suppose that $r_B >s_0$.  Denote by $\la_{\bG}$ the continuous 
linear
functional on $\Xhfin{M}$ associated to ${\bG}$ as in \rmi.  Then
$$
\begin{aligned}   
\mod{(F,  G_B)}
& = \mod{\la_{\bG}(F)} \\
& \leq \bignorm{\la_{\bG}}{(\Xkfin)^*} \, \bignorm{F}{\Xkfin} \\
& \leq C\, \bignorm{\la_{\bG}}{(\Xkfin)^*} \, 
     (1+r_B) \, \mu(B)^{1/2}\, \bignorm{F}{2} 
\quant F \in \QBkperp,
\end{aligned}
$$
by Lemma \ref{l: estimate of Xkfin norm}. 
By taking the supremum of both sides over all $F$ in $\QBkperp$
such that $\norm{F}{2} =1$, we obtain that 
$$
\bignorm{G_B}{2}
\leq C\, \bignorm{\la_{\bG}}{(\Xkfin)^*} \, (1+r_B) \, \mu(B)^{1/2},
$$
which, in view of \rmi, implies the required conclusion.

Now, we prove \rmiii\ for $k=1$.   The proof in the case where $k
\geq 2$
is similar and is omitted. 
Suppose that the atom $a$ is supported in the ball $B(p,R)$ with 
$R\leq s_0$.
Observe preliminarily that both $\cU a$ and $G_{B_m}$ (recall 
that 
$B_m$ denotes the ball with centre $o$ and radius $m$) are in $
\ld{M}$, so that
the inner product $(\cU a,G_{B_m})$ in the statement makes 
sense. 
By Lemma \ref{l: Uka}, we may write  
$$
\cU a 
= \sum_{i=0}^{d} A_i' + \sum_{j=1}^\infty \, A_j'', 
$$ 
where the series $\sum_{j=1}^\infty A_j''$ converges in $\ld{M}$, $A_i'$ is supported in $B'_i$ and $A_i''$ is supported in $B''_i$.
Therefore  
$$
(\cU a,G_{B_m})
= \sum_{i=1}^d (A_i', G_{B_m}) + \sum_{j=1}^\infty  (A_j'', 
G_{B_m}).
$$

We \textit{claim} that
$$  
\lim_{m\to \infty} (\cU a,G_{B_m})
= \sum_{i=1}^d \, \lim_{m\to \infty} (A_i', G_{B_m}) 
   + \sum_{j=1}^\infty  \, \lim_{m\to \infty} (A_j'', G_{B_m}).
$$  
To prove this we set
$
c_{j,m}
:=(A_j'', G_{B_m})
$
and we show that $\sup_{m} \mod{c_{j,m}}$ is a summable 
sequence, whence the result will follow by the Dominated 
Convergence Theorem.      We
denote by $\de$ the distance between $p$ and $o$ and consider 
the three cases
$$
m< \de-j-1, \quad
\de-j-1 \leq m \leq  \de+j+1 \quad
\hbox{and}\quad  
m> \de+j+1
$$
separately.  

In the first case, $B_j''\cap B_m = \emptyset$, so that $c_{j,m} = 
0$.  

In the second case, $B_j''\cap B_m \neq \emptyset$, and
\begin{equation} \label{f: second case}
\begin{aligned}
\mod{c_{j,m}}
& \leq \norm{A_j''}{2} \, \norm{G_{B_m}}{2}   \\
& \leq C\, \e^{-2\be j} \, m \,\mu\big(B_m\big)^{1/2} \, 
      \norm{\bG}{\bYone}   \\
& \leq C\, \e^{-2\be j} \, (\de+j+1)\, \mu(B_{\de+j+1})^{1/2} \, 
      \norm{\bG}{\bYone}   \\
& \leq C\, \e^{-\vep j} \,  \norm{\bG}{\bYone},
\end{aligned}
\end{equation}
for some positive $\vep$.  Here we have used the estimate 
of the $L^2$-norm of $A_j''$ and $G_B$ given in Lemma \ref{l: 
Uka}~(ii) 
and \rmii\ above, respectively, and inequality (\ref{f: volume 
growth}). The constant
$C$  is independent of $j$ and $m$, 
but may depend on the point $p$.  

Finally, in the third case $B_m \supset B_j''$. 
Since $A_j''$ is in $Q_1(B_j'')^\perp$, 
\begin{equation} \label{f: third case}
\begin{aligned}
\mod{c_{j,m}}
& = \mod{(A_j'', \pi_{B_j'',1} (G_{B_m}))}  \\
& \leq \norm{A_j''}{2} \, \norm{\pi_{B_j'',1}(G_{B_m})}{2}   \\
& \leq C\, \e^{-2\be j}  \, j\, \mu(B_j'')^{1/2} \,  \norm{\bG}{\bYone}   
\\
& \leq C\, \e^{-\vep j} \,  \norm{\bG}{\bYone},
\end{aligned}
\end{equation}
for some positive $\vep$.  Here we have applied again the 
estimate 
of the $L^2$-norms of $A_j''$ and of $G_{B_j''}$ given in Lemma 
\ref{l: Uka}~(ii)
and in \rmii\ above, respectively, and 
$C$ is a constant which is independent of $j$ and $m$, 
but may depend on the point $p$.  

This completes the proof that $\sup_{m} \mod{c_{j,m}}$ is a 
summable sequence. 
To conclude the proof of the claim, it remains to observe that 
$$
\lim_{m\to \infty} (A_i', G_{B_m})
\qquad\hbox{and}\qquad
\lim_{m\to \infty} (A_i'', G_{B_m})
$$
exist, because the sequences
$m\mapsto (A_i', G_{B_m})$ and $m\mapsto (A_i'',G_{B_m})$ are 
eventually constant.  
To conclude the proof of point \rmiii \ of the lemma, it remains to 
prove the estimate 
in the statement.   
Since $A_i'$ and $A_j''$ are in $q^2_1(B_i')^\perp$ and 
$q^2_1(B_j'')^\perp$, respectively, we get
$$
\begin{aligned}
\mod{\la(\cU a)} 
& \leq \sum_{i=1}^d \bigmod{\lim_{m\rightarrow \infty}
(A_i',G_{B_m})} 
         + \sum_{j=1}^\infty  \bigmod{\lim_{m\rightarrow \infty}
(A_j'',G_{B_m})} \\
& =   \sum_{i=1}^d \bigmod{(A_i',G_{B_i'})}  
         +  \sum_{j=1}^\infty  \bigmod{(A_j'',G_{B_j''})}\\
& \leq \sum_{i=1}^d \norm{A_i'}{2}\, \norm{G_{B_i'}}{2} 
        + C\, \norm{\bG}{\bYone}
           \sum_{j=1}^\infty\e^{-2\be j}  \, j\, \mu(B_j'')^{1/2} \\
& \leq \sum_{i=1}^d  4^{-i} \, \mu(B_i')^{-1/2}\,  \norm{G_{B_i'}}{2}
          + C\,\norm{\bG}{\bYone} \\
& \leq C\,\norm{\bG}{\bYone},
\end{aligned}
$$
where we have applied Lemma \ref{l: Uka} and
$C$ is indipendent of $a$, as required. 

To prove \rmiv\ observe that, given $\bG$ is in $\bYh{M}$, the 
linear functional 
$a\mapsto \lim_{m\rightarrow \infty} (\cU^k a, G_{B_m})$
extends, by \rmiii, to a unique linear functional on $\hufin{M}$ that
is uniformly bounded on atoms.  Thus, it extends
to a unique continuous linear functional, $\ell$ say, on $\hu{M}$ 
(see \cite[Theorem 4.1]{MM}).
In particular, 
$$  
\norm{\ell}{(H^1)^*}
\leq C \sup \, \{ \mod{\ell(a)}: \hbox{$a$ $H^1$-atom} \}
\leq C\, \norm{\bG}{\bYk},
$$
where $C$ is the same as in \rmiii.   
Since $\cU^{-k}$ is an isometry between $\Xh{M}$ and $\hu{M}$,
the linear functional $\ell\circ \cU^{-k}$ is in $\Yh{M}$, and 
$$
\norm{\ell\circ \cU^{-k}}{\Yk} 
= \norm{\ell}{(H^1)^*} 
\leq C \, \norm{\bG}{\bYk}.
$$
Furthermore, if $F$ is in $\cU^k \hufin{M}$, then $F= \cU^kf$
for some $f$ in $\hufin{M}$, and 
$$
\begin{aligned}
(\ell\circ \cU^{-k}) (F)
& = (\ell\circ \cU^{-k}) (\cU^k f) \\
& = \ell(f) \\
& = \lim_{m\rightarrow \infty} ( \cU^kf \,, G_{B_m})\,,
\end{aligned}
$$
and 
\begin{equation}\label{eq: boundUkH1fin}
\begin{aligned}
\mod{(\ell\circ \cU^{-k}) (F)}
& = \mod{\ell(f)} \\
& \leq C \, \norm{f}{H^1} \, \norm{\bG}{\bYk} \\
& =    C \, \norm{F}{\Xk} \, \norm{\bG}{\bYk} 
\quant F \in \cU^k \bigl(\hufin{M}\bigr).
\end{aligned}
\end{equation}
Since $\Xhfin{M}\subset \cU^k\bigl(\hufin{M}\bigr)$ 
(see Remark \ref{r: XkUkH1fin}), 
the space $\cU^k \bigl(\hufin{M}\bigr)$ is dense in $\Xh{M}$. 
Then, by \eqref{eq: boundUkH1fin}, $\ell\circ \cU^{-k}$ 
extends to a unique bounded functional $\la_{\bG}$ on $\Xh{M}$,  
defined by \eqref{f: lambdasuXfin} on $\Xhfin{M}$,
such that $\norm{\la_{\bG}}{(X^k)^*} \leq C\,\norm{\bG}{\bYk}$. 

Finally, we prove \rmv.  Pick $G$ in $GBMO^k(M)$ and set $\bG:= 
\{ \pi_{B,k}(G): B \in \cB \}$.  Clearly,
$\iota\big(G+q_k(M)\big)$ agrees on $\Xhfin{M}$
with the functional $\la_{\bG}$, defined in \eqref{f: lambdasuXfin}.
By \rmiv, $\la_{\bG}$ extends uniquely 
to a continuous linear functional on $\Xh{M}$   
(for $\Xhfin{M}$ is norm dense in $\Xh{M}$).  The required norm
 estimate
follows then from \rmiv.
\end{proof}
\begin{corollary}\label{indip da s}
All the spaces $GBMO_s^k(M)$, $0<s<\Inj(M)$, coincide 
and all 
the norms $\norm{\cdot}{GBMO_s^k}$ are equivalent.
\end{corollary}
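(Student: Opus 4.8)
The plan is to deduce everything from Lemma~\ref{l: dual of Xhfin}, applied at a variable scale. Fix $0<s\le s'<\Inj(M)$. Since $\cB_s\subset\cB_{s'}$, one inequality is immediate, namely $\norm{G}{GBMO_s^k}\le\norm{G}{GBMO_{s'}^k}$; so the whole point is to prove the reverse bound $\norm{G}{GBMO_{s'}^k}\le C\,\norm{G}{GBMO_s^k}$, with a constant allowed to depend on $s$ and $s'$. Once this is available for every such pair, the asserted coincidence of all the spaces $GBMO_s^k(M)$ and the equivalence of the norms $\norm{\cdot}{GBMO_s^k}$ follow at once.

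To produce the reverse bound I would associate to $G$ the family $\bG=\{\pi_{B,k}(G):B\in\cB\}$, which satisfies the compatibility condition \rmi\ of Definition~\ref{def: the space Y} (the verification given just after that definition is scale-free) and for which $\sup_{B\in\cB_s}\mu(B)^{-1/2}\norm{\pi_{B,k}(G)}{2}=\norm{G}{GBMO_s^k}$. The crux is the observation that Lemma~\ref{l: dual of Xhfin}~\rmi--\rmii\ holds verbatim with $s_0$ replaced by any scale $s\in(0,\Inj(M))$: balls of radius at most $s$ have no holes and smooth boundaries since $s<\Inj(M)$, so Proposition~\ref{p: structure CHhperp} applies to them; Lemma~\ref{l: estimate of Xkfin norm} is available for all balls; and, by Remark~\ref{scale inv}, the space $\Xhfin{M}$ and its norm are, up to equivalence, independent of the scale parameter. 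Carrying the proof of part~\rmi\ through with atoms at scale $s$ gives $\norm{\la_{\bG}}{(\Xkfin)^*}\le C\,\norm{G}{GBMO_s^k}$ for the functional $\la_{\bG}$ of \eqref{f: lambdasuXfin}, and then the proof of part~\rmii\ produces a constant $C_s$ with $\mu(B)^{-1/2}\norm{\pi_{B,k}(G)}{2}\le C_s\,(1+r_B)\,\norm{G}{GBMO_s^k}$ for every ball $B$.

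Taking the supremum of this last inequality over $B\in\cB_{s'}$, for which $r_B\le s'$, I would conclude $\norm{G}{GBMO_{s'}^k}\le C_s\,(1+s')\,\norm{G}{GBMO_s^k}$, which is precisely the reverse bound required. Together with the trivial inclusion this yields the claimed coincidence and pairwise norm equivalence of the spaces $GBMO_s^k(M)$, $0<s<\Inj(M)$.

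The main obstacle is the claim that the machinery of Lemma~\ref{l: dual of Xhfin} survives the passage to a scale $s<s_0$. For scales $s\ge s_0$ the stated form of part~\rmii\ already suffices: one applies it directly and takes the supremum over $\cB_{s'}$, and no rescaling is needed. For $s<s_0$, however, the argument cannot be short-circuited, since bounding $\norm{\la_{\bG}}{(\Xkfin)^*}$ by $\norm{G}{GBMO_s^k}$ forces one to decompose admissible atoms into \emph{finite} combinations of atoms at the smaller scale $s$, with control of the sum of the absolute values of the coefficients. This is exactly the scale-invariance of the finite atomic norm asserted in Remark~\ref{scale inv}, together with the corresponding scale-$s$ form of the economical decomposition underlying Lemma~\ref{l: economical decomposition I}; checking that these remain valid below $s_0$ is the one point requiring care, after which the remaining steps are routine.
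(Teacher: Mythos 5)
Your proposal is correct and follows essentially the same route as the paper: the trivial inequality for the smaller scale, then the functional $\la_{\bG}$ built from $\{\pi_{B,k}(G)\}$, the rescaled version of Lemma~\ref{l: dual of Xhfin}~\rmi\ (via Remark~\ref{scale inv}) to bound $\norm{\la_{\bG}}{(\Xkfin)^*}$ by $\norm{G}{GBMO_s^k}$, the argument of part~\rmii\ to control $\norm{\pi_{B,k}(G)}{2}$ on arbitrary balls, and finally the supremum over $\cB_{s'}$. The paper's proof is exactly this, stated for $s_1\le s_2$; your added remarks on why the machinery survives below $s_0$ only make explicit what the paper leaves implicit in ``arguing as in the proof of Lemma~\ref{l: dual of Xhfin}.''
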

\begin{proof}
Suppose that  $s_1\le s_2$. It is obvious from  Definition \ref{def: the space GBMO} that 
$GBMO_{s_2}^k(M)\subseteq GBMO_{s_1}^k(M)$
and $\norm{G}{GBMO_{s_1}^k}\le \norm{G}{GBMO_{s_2}^k}$.\par 
Assume next that $G\in GBMO_{s_1}^k(M)$ and  
for every ball $B \in\cB$ define 
 $G_B=\pi_{k,B}(G)$. Let $\lambda_G$ be the linear functional 
 on $X_{\mathrm {fin}}^k(M)$ defined by
$$
\lambda_G(F)=\lim_B \int F\, G_B\wrt\mu.
$$
Now,  arguing as in the proof of Lemma \ref{l: dual of Xhfin} \rmi, but using  $X^k$-atoms with support contained in balls of  radius less than $s_1$ instead of $s_0$ (see Remark \ref{scale inv}), we  obtain that
$$
\mod{\lambda_G(F)} \le\, C\, \norm{G}{GBMO_{s_1}^k}\ \norm{F}{X_{\mathrm {fin}}^k(M)} \qquad\forall F\in X_{\mathrm {fin}}^k(M).
$$
Hence, arguing as in the proof of part \rmii\  of Lemma \ref{l: dual of Xhfin}, we obtain that for all balls $B\in\cB$
$$
\begin{aligned}
\norm{G_B}{2}&\le\, C  \norm{\lambda_G}{(X_{\mathrm{fin}}^k)^*}\,C\, 
(1+r_B)\, \mu(B)^{1/2}\\
&\le\, C\norm{G}{GBMO_{s_1}^k}\ (1+r_B)\, \mu(B)^{1/2}.
\end{aligned}
$$
Thus
$$
\norm{G}{GBMO_{s_2}^k}=\sup_{B\in\cB_{s_2}} \mu(B)^{-1/2}\ \norm{G_B}{2}\le\,C \,(1+s_2)\norm{G}{GBMO_{s_1}^k}.
$$
This shows that $GBMO_{s_1}^k(M)\subseteq GBMO_{s_2}^k(M)$ and that the two norms are equivalent.
\end{proof}
To prove that the map $\iota: GBMO^k(M)/q_k(M) \to \Yk(M)$ is an isomorphism, we need  a regularity result for solutions in $L^2_{loc}(M)$ of the equation $\cL^k u=g$ with $g$ in $BMO(M)$.

\begin{proposition} \label{p: cont iota}
For every $g\in BMO(M)$ and for every positive integer $k$ any solution  $U_{k,g}$  of the 
equation $\cL^k u=g$ is in $GBMO^k(M)$ and
$$
\norm{U_{k,g}}{GBMO^k}\le \opnorm{\cL^{-k}}{2}\ \norm{g}{BMO}.
$$.
\end{proposition}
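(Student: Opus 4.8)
The plan is to bound, for each ball $B$ in $\cB_{s_0}$, the quantity $\mu(B)^{-1/2}\norm{\PBk(U_{k,g})}{2}$ and then take the supremum. The key structural fact I would use is that $\PBk(U_{k,g})$ is the orthogonal projection of $U_{k,g}|_B$ onto $\QBkperp$, so (since $\Qk{B}$ is closed in $\ld{B}$) its $\ld{B}$-norm equals the distance in $\ld{B}$ from $U_{k,g}$ to $\Qk{B}$. Thus it is enough to produce, for each such $B$, one function in $\Qk{B}$ that is close to $U_{k,g}$ in $\ld{B}$.

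First I would note that the assertion is independent of the chosen solution: by Theorem~\ref{t: sol} a solution in $\ldloc{M}$ exists, every distributional solution lies in $\ldloc{M}$ by elliptic regularity, and two solutions differ by an element of $q_k(M)$, on which $\norm{\cdot}{GBMO^k}$ vanishes. So fix any solution $U$ and a ball $B$ in $\cB_{s_0}$, let $c_B:=\mu(B)^{-1}\int_B g\wrt\mu$ be the mean of $g$ over $B$, and set $h:=(g-c_B)\,\One_B$ and $w:=\cL^{-k}h$. Here $h$ is in $\ld{M}$, and $w$ is well defined and in $\ld{M}$ because $b>0$ makes $\cL^{-k}$ bounded on $\ld{M}$.

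The heart of the argument is that $U-w$ belongs to $\Qk{B}$: it lies in $\ld{B}$, and on the open ball $B$ one has $\cL^k(U-w)=g-(g-c_B)=c_B$ in the sense of distributions, so $U-w$ is $k$-quasi-harmonic on $B$. Taking $U-w$ as competitor for the distance, I would then estimate
$$
\norm{\PBk(U)}{2}
\le \norm{U-(U-w)}{\ld{B}}
=\norm{w}{\ld{B}}
\le \norm{w}{2}
\le \opnorm{\cL^{-k}}{2}\,\norm{h}{2}
=\opnorm{\cL^{-k}}{2}\,\norm{g-c_B}{\ld{B}},
$$
and, after dividing by $\mu(B)^{1/2}$ and recalling that the mean $c_B$ minimizes the mean oscillation,
$$
\mu(B)^{-1/2}\,\norm{\PBk(U)}{2}
\le \opnorm{\cL^{-k}}{2}\,\Big(\frac{1}{\mu(B)}\int_B\bigmod{g-c_B}^2\wrt\mu\Big)^{1/2}
\le \opnorm{\cL^{-k}}{2}\,\norm{g}{BMO}.
$$
Taking the supremum over $B$ in $\cB_{s_0}$ would give the stated bound.

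I do not anticipate a genuine obstacle: the construction of $w$ together with the projection interpretation of $\PBk$ does all the work. The one point to handle carefully is the verification that $U-w\in\Qk{B}$, which rests solely on the identity $\cL^k w=h=g-c_B$ holding on the open ball $B$. It is worth emphasizing that this route bypasses the global approximation of Proposition~\ref{p: structure CHhperp}: since $\PBk$ measures distance to $\Qk{B}$ directly, there is no need to approximate by global $k$-quasi-harmonic functions.
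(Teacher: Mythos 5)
Your proof is correct, and it reaches the stated bound with the exact constant $\opnorm{\cL^{-k}}{2}$, but by a route that is mechanically quite different from (and leaner than) the paper's. Both arguments share the same germ: perturb $U_{k,g}$ by $\cL^{-k}$ applied to a localized datum, so that the difference is killed by $\pi_{B,k}$, and then invoke the $\ld{M}$-boundedness of $\cL^{-k}$. The paper localizes with a smooth cutoff $\vp\equiv 1$ near $\OV B$, so that $U_{k,g}-\cL^{-k}(\vp g)$ is $k$-\emph{harmonic} in a neighbourhood of $\OV B$; it must then estimate $\norm{\pi_{B,k}(\cL^{-k}(\vp g))}{2}$ by duality against $F\in\QBkperp$, and the crucial insertion of the constant $g_B$ (the step that turns an $L^2$ bound into a $BMO$ bound) is obtained from the nontrivial fact that $\cL^{-k}F$ is a multiple of an $H^1$-atom supported in $\OV B$ (Lemma~\ref{l: iso} and \cite[Remark~3.5]{MMV2}), hence has vanishing integral; a final observation that the pairing is independent of $\vp$ removes the cutoff from the right-hand side. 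You instead build the constant in from the start by sharp truncation, $h=(g-c_B)\One_B$ and $w=\cL^{-k}h$, so that $U_{k,g}-w$ solves $\cL^k(\cdot)=c_B$ on the open ball $B$. Here you exploit precisely the feature that distinguishes $\Qk{B}$ from the Bergman space $\bktwo(B)$: it contains solutions of $\cL^k v=c$ for an \emph{arbitrary} constant $c$, not just $c=0$, which is why subtracting the mean of $g$ costs nothing. The identification of $\norm{\pi_{B,k}(\cdot)}{2}$ with the distance to $\Qk{B}$ (legitimate because $\Qk{B}$ is closed in $\ld{B}$, as established in Section~\ref{s: Quasi}) then finishes the proof with no duality, no self-adjointness of $\cL^{-k}$, and no input from Lemma~\ref{l: iso} or \cite{MMV2}. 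What the paper's route buys is coherence with the atom-mapping machinery used throughout; what yours buys is economy and self-containedness, since it needs only the spectral gap and the closedness of $\Qk{B}$.
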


\begin{proof}
Suppose that $B$ is in $\cB_{s_0}$, and denote by 
$\vp$ a smooth function with compact support that is equal to $1$ 
in a neighbourhood of $\OV B$.  
Then the function $\cL^{-k} (\vp\, g)$
satisfies the equation $\cL^k u = \vp \, g$ on $M$.  Hence
$U_{k,g} - \cL^{-k} (\vp\, g)$ satisfies the equation $\cL^k u = (1-
\vp) \, g$,
so that $U_{k,g} - \cL^{-k} (\vp\, g)$ is $k$-harmonic in a 
neighbourhood of $\OV B$. 
Therefore 
$$
\pi_{B,k}\bigl(U_{k,g} - \cL^{-k} (\vp\, g) \bigr) = 0,
$$
whence
$$
\begin{aligned}
\norm{\pi_{B,k}(U_{k,g})}{2} 
& =  \norm{\pi_{B,k}(\cL^{-k} (\vp\, g))}{2} \\
& =  \sup \bigl\{ \bigmod{\bigl(F,\cL^{-k} (\vp\, g)\bigr)}: 
       F \in q_k^2(B)^\perp, \, \norm{F}{2} =1 \bigr\}.
\end{aligned}
$$
Since $\cL^{-k}$ is self adjoint and $\cL^{-k}F$ is 
a multiple of an $H^1$-atom supported in $\OV B$ (see 
\cite[Remark~3.5]{MMV2}), 
\begin{equation}
\begin{aligned}
\bigl(F,\cL^{-k} (\vp\, g)\bigr)
& = (\cL^{-k} F,\vp\, g)  \\
&=\int_B \cL^{-k} F\,(g-g_B)\wrt\mu \label{indfi}\\
& = \Big(\cL^{-k} F, \vp (g-g_B)\Big) \\
&=  \Big( F, \cL^{-k}(\vp (g-g_B))\Big).
\end{aligned}
\end{equation}
Hence 
$$
\mod{\Big(F, \cL^{-k}(\vp g)\Big)}\le \bigopnorm{\cL^{-k}}{2} \, 
\norm{\vp(g-g_B)}{2}.
$$
Since by the second equality in (\ref{indfi}), $\Big( F, \cL^{-k}(\vp g)
\Big)$ does not depend on $\vp$, we obtain that
$$
\bigmod{\bigl(F,\cL^{-k} (\vp\, g)\bigr)}
 \leq \bigopnorm{\cL^{-k}}{2} \, \norm{g-g_B}{L^2(B)}.
$$
whence
$$
\sup \bigl\{ \bigmod{\bigl(F,\cL^{-k} (\vp\, g)\bigr)}:
       F \in q_k^2(B)^\perp, \, \norm{F}{2} =1 \bigr\}  
\leq \bigopnorm{\cL^{-k}}{2} \, \norm{g-g_B}{L^2(B)}.
$$
By combining this inequality with the formulae above, we 
conclude that
$$
\begin{aligned}
\bignorm{U_{k,g}}{GBMO^k}
& = \sup_{B\in \cB_{s_0}} \mu(B)^{-1/2} \, \norm{\pi_{B,k}(U_{k,g})}
{2}  \\
& \leq \opnorm{\cL^{-k}}{2} \, \sup_{B\in \cB_{s_0}} \mu(B)^{-1/2} \, 
    \norm{g-g_B}{L^2(B)} \\
& = \opnorm{\cL^{-k}}{2} \, \norm{g}{BMO}, 
\end{aligned}
$$
as required.
\end{proof}

\begin{theorem} \label{t: main}
Suppose that $k$ is a positive integer and that $M$ has 
$C^{2k-2}$
bounded geometry.  Then the map $\iota$ 
(see Lemma~\ref{l: dual of Xhfin}~\rmv)
is a Banach space isomorphism between $GBMO^k(M)/q_k(M)$ 
and  $\Yk(M)$.  
\end{theorem}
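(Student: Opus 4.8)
The plan is to show that $\iota$ is a continuous linear bijection with continuous inverse; since its continuity has already been established in Lemma~\ref{l: dual of Xhfin}~\rmv, it remains to prove that $\iota$ is bounded below and onto, and the two norm estimates together will identify it as a Banach space isomorphism.

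First I would prove the lower bound, which also yields injectivity. Fix $G$ in $GBMO^k(M)$ and a ball $B$ in $\cB_{s_0}$, and set $W := \pi_{B,k}(G)$, an element of $\QBkperp$. If $W \neq 0$, then $A := W/\bigl(\mu(B)^{1/2}\,\bignorm{W}{2}\bigr)$, extended by zero outside $B$, is an admissible $X^k$-atom, so $\bignorm{A}{\Xk} \leq 1$. Since $A$ lies in $\QBkperp$, testing $\iota(G)$ against $A$ gives $\iota(G)(A) = (A,\pi_{B,k}(G)) = \mu(B)^{-1/2}\,\bignorm{W}{2}$, whence $\mu(B)^{-1/2}\,\bignorm{\pi_{B,k}(G)}{2} = \iota(G)(A) \leq \bignorm{\iota(G)}{\Yk}$. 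Taking the supremum over $B$ in $\cB_{s_0}$ yields $\bignorm{G}{GBMO^k} \leq \bignorm{\iota(G)}{\Yk}$. In particular $\iota(G) = 0$ forces $\pi_{B,k}(G) = 0$ for every such $B$, so $G$ is global $k$-quasi-harmonic by Proposition~\ref{p: global qhfunctions}, i.e.\ $G + q_k(M) = 0$; thus $\iota$ is injective and bounded below.

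The surjectivity is the heart of the matter. Given $\Lambda$ in $\Yk(M)$, I would use the isometric isomorphism $\bigl(\cU^{-k}\bigr)^t$ between $BMO(M)/\BC$ and $\Yk(M)$ to produce $g$ in $BMO(M)$ with $\Lambda(F) = \int_M (\cU^{-k}F)\,g \wrt\mu$ for all $F$ in $\Xh{M}$. The candidate preimage is $G := \cU^{-k}g$. Writing $\cU^{-k} = (\cI + \si\,\cL^{-1})^k = \sum_{j=0}^k \binom{k}{j}\,\si^j\,\cL^{-j}$ and interpreting each $\cL^{-j}g$ as a solution of $\cL^j u = g$ (which exists in $\ldloc{M}$ by Theorem~\ref{t: sol}), Proposition~\ref{p: cont iota} shows $\cL^{-j}g \in GBMO^j(M) \subseteq GBMO^k(M)$ for $0 \leq j \leq k$, the inclusion being \eqref{l,k}; hence $G \in GBMO^k(M)$ with $\bignorm{G}{GBMO^k} \leq C\,\bignorm{g}{BMO} = C\,\bignorm{\Lambda}{\Yk}$. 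It then remains to verify $\iota(G) = \Lambda$, and it suffices to check this on an admissible $X^k$-atom $A$ supported in $B$. Arguing exactly as in Proposition~\ref{p: cont iota} — cutting off $g$ near $\OV B$, using that $\cL^{-j}A$ is supported in $\OV B$ and that $A$ is orthogonal to the $j$-quasi-harmonic functions on $B$ — one obtains $\int_B A\,\cL^{-j}g \wrt\mu = (\cL^{-j}A, g)$ for each $j$, and summing the binomial series gives $\iota(G)(A) = (\cU^{-k}A, g) = \Lambda(A)$. Since $\Xhfin{M}$ is dense in $\Xh{M}$ and both functionals are continuous, $\iota(G) = \Lambda$.

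I expect the main obstacle to be this surjectivity step, and within it the verification that $\iota(\cU^{-k}g) = \Lambda$: one must give $\cU^{-k}g$ a rigorous meaning for $g$ in $BMO(M) \not\subset \ld{M}$ (handled above by solving the Poisson equations term by term, which is exactly what Proposition~\ref{p: cont iota} is designed to control) and, more delicately, match the pairing $\int_B A\,\cL^{-j}g$ with the $\hu$–$BMO$ duality through the self-adjointness of $\cL^{-j}$ and the support and cancellation properties of atoms. Combining the two estimates $\bignorm{G}{GBMO^k} \leq \bignorm{\iota(G)}{\Yk} \leq C\,\bignorm{G}{GBMO^k}$ with the bijectivity just proved shows that $\iota$ is a Banach space isomorphism between $GBMO^k(M)/q_k(M)$ and $\Yk(M)$.
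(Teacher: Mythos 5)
Your proposal is correct and follows essentially the same route as the paper: injectivity via Proposition~\ref{p: global qhfunctions}, and surjectivity by pulling back a coset $g+\BC$ through $\bigl(\cU^{-k}\bigr)^t$, expanding $\cU^{-k}$ binomially, solving $\cL^j u=g$ by Theorem~\ref{t: sol}, and using Proposition~\ref{p: cont iota} to place the solutions $U_{g,j}$ in $GBMO^j(M)\subseteq GBMO^k(M)$. The only (harmless) differences are that you replace the paper's appeal to the Open Mapping Theorem by an explicit lower bound obtained by testing $\iota(G+q_k(M))$ against atoms of the form $\pi_{B,k}(G)/\bigl(\mu(B)^{1/2}\,\bignorm{\pi_{B,k}(G)}{2}\bigr)$, and in that step the inequality $\bignorm{A}{\Xk}\le 1$ should read $\bignorm{A}{\Xk}\le C$, with $C$ the equivalence constant of Theorem~\ref{t: at dec}, since the $\Xk$-norm is defined through $\cU^{-k}$ and $H^1$ rather than atomically.
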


\begin{proof}
By Lemma~\ref{l: dual of Xhfin}~\rmv\ the map $\iota$ is 
continuous. We shall prove that 
$\iota$ is bijective.  The required conclusion will then 
follow from (a standard consequence of) the Open Mapping 
Theorem.

First we show that $\iota$ is \emph{injective}.  Suppose that $G$ 
is a function
in $GBMO^k(M)$ such that $\iota (G+q_k(M)) = 0$.  In particular, 
$$
\iota (G+q_k(M))(A) 
=\int_M A \, \, G \wrt \mu 
= 0 
$$
for every $X^k$-atom $A$.   This implies that $\pi_{B,k}(G) = 0$ 
for every 
ball $B$ with radius $\leq s_0$.  
By Proposition~\ref{p: global qhfunctions}, $G$ is in $q_k(M)$, as 
required. 
\par
Next we prove that $\iota$ is surjective. Suppose that $\la$ is in 
$\Yh{M}$.
Since $\big(\cU^{-k}\big)^t$ is an isomorphism between $BMO(M)/
\BC$ and
$\Yh{M}$, there exists a unique coset $g+\BC$ in $BMO(M)/\BC$ 
such that 
$$
\big(\cU^{-k}\big)^t(g+\BC) = \la.
$$
Therefore, for every $X^k$-atom $A$
$$
\begin{aligned}
\la\big(A\big)
& = \prodo{A}{\big(\cU^{-k}\big)^t(g+\BC)}  \\
& = \prodo{\cU^{-k}(A)}{g+\BC},
\end{aligned}
$$
by definition of transpose operator.  The pairing in the first line 
is the duality between $\Xh{M}$ and $\Yh{M}$ and that in the 
second
is the duality between $\hu{M}$ and $BMO(M)/\BC$. 
Since $A$ is in $\ld{M}$, 
$$
\cU^{-k}(A)
= \sum_{j=0}^k  \,\binom{k}{j}\, \si^{j}\cL^{-j}A,
$$
whence
$$
\prodo{\cU^{-k}(A)}{g+\BC}
= \int_M \Big( \sum_{j=0}^k  \,\binom{k}{j}\, \si^{j} \cL^{-j}A \Big)\, \, 
g \wrt \mu,
$$
for $\cL^{-j} A$, $j=0,1,\ldots, k$ are in $\hufin{M}$.  
Now, denote by $\vp$
a smooth function with compact support which is equal to $1$ in a
neighbourhood of the support of $A$, and let $U_{g,j}$ denote any global 
solution of the 
equation $\cL^j u = g$.   We remark that $U_{g,j}\in GBMO^j(M)\subseteq GBMO^k(M)$ by Proposition \ref{} and the remark preceeding (\ref{l,k}). Then 
$$
\begin{aligned}
\int_M  \cL^{-j} A \, \, \, g \wrt \mu
&  = \int_M  \cL^{-j} A \, \, \,\cL^{j} U_{g,j} \wrt \mu  \\
&  = \int_M  \cL^{-j} A \, \,\,\cL^{j}(\vp \,U_{g,j}) \wrt \mu  \\
&  = \int_M   A \, \, \vp \,U_{g,j} \wrt \mu  \\
&  = \int_M   A \, \,U_{g,j} \wrt \mu.  
\end{aligned}
$$
We have used the fact that the support of $\cL^{-j}A$ is contained in the support of $A$ in the second equality, and the self adjointness of $\cL$ in the third equality.  
Now, define $U=\sum_{j=0}^k \binom{k}{j}\,\si^j\ U_{g,j}$. The function $U$ is in $GBMO^k(M)$ by the remark preceeding (\ref{l,k}). By combining the formulae above, we see that 
$$
\begin{aligned}
\la(A) 
& = \int_M A \, U\wrt\mu\\
&=\iota(U+q_k(M))(A).
\end{aligned}
$$
This completes the proof of the surjectivity of $\iota$, and of the 
theorem.   
\end{proof}
\begin{remark}\label{} We observe that in the proof of Theorem \ref{} we have actually shown the commutativity of the following diagram
\par
\unitlength1mm
\begin{picture}(100,48)(-55,-20)  \label{pic: diagramma}
\put(-33,15){$BMO(M)/\BC$}
\put(-13,-16){$GBMO^k(M)/q_k(M)$}
\put(-10,16){\vector(1,0){35}}
\put(28,15){$Y^k(M)$}
\put(-19,12){\vector(1,-1){22}}
\put(10,-9){\vector(1,1){22}}
\put(-14,0){$\cJ$}
\put(23,-1){$\iota$}
\put(1,19){$\big(\cU^{-k}\big)^t$}
\end{picture}
\par \noindent
where $\cJ$ is the map $g+\BC\mapsto \sum_{j=0}^k \binom{k}{j} \ \sigma^j U_{g,j}+q_k(M)$.
\end{remark}
Now we draw a few consequences of Theorem \ref{t: main}. 

\begin{corollary} \label{c: Ik+siTk}
If $g$ is a 
function 
in $BMO(M)$ such that $\cL g+\si g=const$  then $g$ is constant.
\end{corollary}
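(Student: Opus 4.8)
The plan is to read the conclusion off the commutative diagram of the preceding remark, specialised to $k=1$. Recall from that diagram that $\iota \circ \cJ = \bigl(\cU^{-1}\bigr)^t$, where $\cJ$ sends $g+\BC$ to $\sum_{j=0}^1 \binom{1}{j}\,\si^j\,U_{g,j} + q_1(M)$, with $U_{g,j}$ any global solution of $\cL^j u = g$. Since $\bigl(\cU^{-1}\bigr)^t$ is an isometric isomorphism between $BMO(M)/\BC$ and $Y^1(M)$, and $\iota$ is a Banach space isomorphism by Theorem~\ref{t: main}, the map $\cJ = \iota^{-1}\circ \bigl(\cU^{-1}\bigr)^t$ is itself an isomorphism; in particular it is injective. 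This injectivity is the one fact from the earlier theory that drives the argument.

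First I would make the coset $\cJ(g+\BC)$ explicit. Since $\cL^0 u = g$ forces $U_{g,0} = g$, and writing $U := U_{g,1}$ for any global solution of $\cL u = g$, one has
$$
\cJ(g+\BC) = \bigl(g + \si\,U\bigr) + q_1(M).
$$
Here $g$ lies in $GBMO^1(M)$ because it belongs to $BMO(M)$ (see the observation following \eqref{l,k}), and $U$ lies in $GBMO^1(M)$ by Proposition~\ref{p: cont iota}, so the representative $g + \si\,U$ is a genuine element of $GBMO^1(M)$ and the coset is well defined; its independence of the choice of $U$ reflects that two solutions of $\cL u = g$ differ by a global $1$-harmonic function, which lies in $q_1(M)$.

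The heart of the argument is the one-line verification that, under the hypothesis $\cL g + \si g = c$ for some constant $c$, the representative $g + \si\,U$ is globally $1$-quasi-harmonic. Indeed, applying $\cL$ and using $\cL U = g$ gives
$$
\cL\bigl(g + \si\,U\bigr) = \cL g + \si\,\cL U = \cL g + \si g = c,
$$
so that $g + \si\,U$ belongs to $q_1(M)$ and hence $\cJ(g+\BC)$ is the zero coset in $GBMO^1(M)/q_1(M)$. By the injectivity of $\cJ$ established above, $g + \BC = 0$, that is, $g$ is constant. I do not foresee any real obstacle: once Theorem~\ref{t: main} and the isomorphism $\bigl(\cU^{-1}\bigr)^t$ are in hand, the proof reduces to this computation, the only minor care being the verification that the functions involved genuinely belong to $GBMO^1(M)$.
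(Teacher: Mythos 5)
Your proof is correct and follows essentially the same route as the paper's: both hinge on the computation $\cL(g+\si U_{g,1})=\cL g+\si g=\mathrm{const}$, which places $g+\si U_{g,1}$ in $q_1(M)$, and then conclude via the isomorphism machinery of Theorem~\ref{t: main}. The only difference is presentational: the paper verifies $(\cU^{-1})^t(g+\BC)=0$ by testing against $X^1$-atoms (using the identity established in the proof of Theorem~\ref{t: main}), whereas you invoke the injectivity of $\cJ=\iota^{-1}\circ(\cU^{-1})^t$ read off the commutative diagram, which packages exactly that identity.
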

\begin{proof}
Let $U_{g,1}$ be a global solution of the equation $\cL u=g$. Then $g+\sigma U_{g,1}\in q_1(M)$. Thus 
$$
\int_M (g+\sigma U_{g,1})\ A\wrt \mu=0 
$$
for all $X^1$-atoms $A$. In the proof of Theorem~\ref{t: main} we have shown that 
$$
\langle A,(\cU^{-1})^t(g+\BC)\rangle= \langle \cU^{-1} A, g+\BC\rangle=\int_M(g+\sigma U_{g,1})\ A\wrt \mu=0.
$$
Thus $(\cU^{-1})^t(g+\BC)=0$ and the conclusion follows, since $(\cU^{-1})^t$ is an isomorphism.
\end{proof}

\begin{corollary} \label{c: G and bG}
For every $\bG = \{G_B: B \in \cB\}$ 
in $\bYk(M)$, there exists $G$ in $GBMO^k(M)$ such that $G_B 
= \pi_{B,k}(G)$ for 
every $B$ in $\cB$.  Furthermore, $\norm{\bG}{\bYk} \asymp 
\norm{G}{GBMO^k}$. 
\end{corollary}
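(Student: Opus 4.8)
The plan is to manufacture the function $G$ directly from the isomorphism of Theorem~\ref{t: main} and then to check, ball by ball, that its projections reproduce the prescribed family $\bG$. Given $\bG = \{G_B : B \in \cB\}$ in $\bYk(M)$, Lemma~\ref{l: dual of Xhfin}~\rmiv\ shows that the associated functional $\la_{\bG}$, defined in \eqref{f: lambdasuXfin}, extends to a continuous linear functional on $\Xh{M}$, hence is an element of $\Yk(M)$. Since $\iota$ is surjective by Theorem~\ref{t: main}, there exists $G$ in $GBMO^k(M)$ with $\iota(G + q_k(M)) = \la_{\bG}$. Everything then reduces to proving that $\pi_{B,k}(G) = G_B$ for every $B$ in $\cB$; the norm equivalence will be immediate once this identity is established.

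First I would treat balls $B_0$ in $\cB_{s_0}$. For such a ball and any $X^k$-atom $A$ supported in $B_0$, the equality $\iota(G+q_k(M)) = \la_{\bG}$ gives $\int_M A\, G \wrt\mu = \la_{\bG}(A) = \lim_B \int_M A\, G_B \wrt\mu$. For $B \supset B_0$ the compatibility condition \itemno1 of Definition~\ref{def: the space Y} yields $\pi_{B_0,k}(G_B) = G_{B_0}$, and since $A$ lies in $q_k^2(B_0)^\perp$ (for $B_0\in\cB_{s_0}$ the cancellation condition is equivalent to $A \perp q_k^2(B_0)$ by Proposition~\ref{p: structure CHhperp}), only the component of $G_B$ in $q_k^2(B_0)^\perp$ survives the pairing, so the limit equals $\int_{B_0} A\, G_{B_0} \wrt\mu$. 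The same orthogonality on the left gives $\int_M A\, G = \int_{B_0} A\, \pi_{B_0,k}(G)\wrt\mu$. Hence $\int_{B_0} A\,\bigl(\pi_{B_0,k}(G) - G_{B_0}\bigr) \wrt\mu = 0$ for every such atom; as atoms exhaust $q_k^2(B_0)^\perp$ up to scalars, testing against $A := \pi_{B_0,k}(G) - G_{B_0} \in q_k^2(B_0)^\perp$ forces $\pi_{B_0,k}(G) = G_{B_0}$.

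To pass from small balls to an arbitrary ball $B$, I would set $H := G_B - \pi_{B,k}(G) \in q_k^2(B)^\perp$ and show $H=0$ by localisation. For any $B_0$ in $\cB_{s_0}$ with $B_0 \subset B$, compatibility of $\bG$ gives $\pi_{B_0,k}(G_B) = G_{B_0}$, while a short computation shows $\pi_{B_0,k}(\pi_{B,k}(G)) = \pi_{B_0,k}(G)$, since the part of $G|_B$ lying in $q_k^2(B)$ restricts to a function in $q_k^2(B_0)$ and is therefore killed by $\pi_{B_0,k}$. Combining these with the small-ball identity $G_{B_0} = \pi_{B_0,k}(G)$ gives $\pi_{B_0,k}(H) = 0$ for all such $B_0$; equivalently $\cL^k H$ is constant on each $B_0$. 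Since $B$ is connected and covered by overlapping balls of $\cB_{s_0}$, these constants agree, so $\cL^k H$ is constant on all of $B$, i.e. $H \in q_k^2(B)$. As $H$ also lies in $q_k^2(B)^\perp$, we conclude $H=0$, whence $G_B = \pi_{B,k}(G)$ for every $B$ in $\cB$.

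With the identity $G_B = \pi_{B,k}(G)$ in hand, the definitions give $\norm{\bG}{\bYk} = \sup_{B\in\cB_{s_0}} \mu(B)^{-1/2}\,\norm{G_B}{2} = \sup_{B\in\cB_{s_0}} \mu(B)^{-1/2}\,\norm{\pi_{B,k}(G)}{2} = \norm{G}{GBMO^k}$, so in fact the two norms coincide. The step I expect to be the main obstacle is precisely the passage from small to large balls: the atomic cancellation condition is phrased via $q_k^2(\OV B)$ rather than $q_k^2(B)$, and these may differ for large balls, so one cannot simply repeat the atomic argument at large scale. The localisation-and-connectedness argument above is designed to circumvent this by reducing the claim for every ball to the small balls, where Proposition~\ref{p: structure CHhperp} applies.
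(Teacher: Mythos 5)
Your proposal is correct, and it reaches the conclusion by a genuinely different verification than the paper's. Both proofs share the same first move: apply Lemma~\ref{l: dual of Xhfin}~\rmiv\ to view $\la_{\bG}$ as an element of $\Yh{M}$ and invoke the surjectivity of $\iota$ from Theorem~\ref{t: main} to produce $G$. The difference lies in showing $\pi_{B,k}(G)=G_B$ for \emph{every} ball. The paper does this in one stroke for arbitrary $B$: it pairs $G$ and $G_{B'}$ against all (possibly non-admissible) $X^k$-atoms associated to $B$ — these lie in $\Xhfin{M}$ by Remark~\ref{scale inv} — and concludes that $(G_{B'}-G)|_B$ is orthogonal to $q_k^2(\OV B)^\perp$, hence lies in $\OV{q_k^2(\OV B)}\subseteq q_k^2(B)$ and is killed by $\pi_{B,k}$. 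Note that even this argument quietly needs the observation that an atom associated to $B$ is orthogonal to $q_k^2(B')$ whenever $\OV B\subset B'$ (restrictions of functions in $q_k^2(B')$ to $\OV B$ belong to $q_k^2(\OV B)$), in order to see that the net $B''\mapsto \int A\,G_{B''}$ is eventually constant; the paper does not spell this out. Your proof avoids large-scale atoms altogether: you establish the identity on balls of $\cB_{s_0}$, where Proposition~\ref{p: structure CHhperp} makes atoms span exactly $q_k^2(B_0)^\perp$, and then upgrade to arbitrary balls by the localisation-and-connectedness mechanism of Proposition~\ref{p: global qhfunctions}, showing $H:=G_B-\pi_{B,k}(G)$ lies in $q_k^2(B)\cap q_k^2(B)^\perp=\{0\}$ (the key composition identity $\pi_{B_0,k}\circ\pi_{B,k}=\pi_{B_0,k}$ is correctly justified, since restriction maps $q_k^2(B)$ into $q_k^2(B_0)$). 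What your route buys is robustness: it never touches the delicate relation between $q_k^2(\OV B)$ and $q_k^2(B)$ for large balls, nor Remark~\ref{scale inv}; what the paper's route buys is brevity. Your final observation that the norms actually coincide (not merely are equivalent) is also correct and slightly sharper than the statement.
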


\begin{proof}
Suppose that $\bG$ is in $\bYk(M)$.  By Lemma~\ref{l: dual of 
Xhfin}~\rmiv,
the linear functional $\la_{\bG}$, defined by
$$
\la_{\bG}(F)
= \lim_{B'} \int_M F \,\, G_{B'} \wrt \mu
\quant F \in \Xhfin{M},
$$
is in $\Yh{M}$.
Theorem~\ref{t: main} then ensures the existence of a function $G
$ in 
$GBMO^k(M)$ such that 
$
\la_{\bG} = \iota\big(G+q_k(M)\big),
$  
and 
$$
\iota\big(G+q_k(M)\big) (F)
= \int_M F\,\, G \, \wrt \mu
\quant F \in \Xhfin{M}.
$$
Therefore, given a ball $B$, for every ball $B'$ containing $B$
and for every (possibly not admissible) $X^k$-atom $A$ 
associated to $B$ 
we have that 
$$
\int_M A \,\, G_{B'} \wrt \mu
= \int_M A\,\, G \, \wrt \mu.  
$$
It follows that $\pi_{k,B}(G_{B'}) = \pi_{k,B}(G)$.  But
$\pi_{k,B}(G_{B'}) = G_B$, because $\bG$ is in $\bYk(M)$,
and the required formula follows.  
The equivalence of the norms of $\bG$ and $G$ is an obvious 
consequence of the definition of the ``norms'' of $\bYk(M)$ and 
$GBMO^k(M)$.
\end{proof}

\section{The dual of $\Xhfin{M}$}
\label{s: dual of Xkfin}  

A noteworthy consequence of the theory developed in 
Section~\ref{s: dual of H-t s} is the fact, proved in the next 
theorem, 
that $\Xhfin{M}$ and $\Xh{M}$ have isomorphic duals. 

\begin{theorem} \label{t: dual of Xk}
Suppose that $k$ is a positive integer and $M$ has 
$C^{2k-2}$ bounded geometry. 
The dual of $\Xhfin{M}$ is isomorphic to $GBMO^k(M)/q_k(M)$.
The continuous linear functionals on $\Xhfin{M}$ are
precisely those of the form
$$
\la_G (F)
= \int_M F \, G \wrt \mu
\quant F \in \Xhfin{M}
$$
for $G$ in $GBMO^k(M)$.  Furthermore  
$\norm{\la}{(X^k_{\mathrm{fin}})^*} \asymp \norm{G}{GBMO^k}$.
\end{theorem}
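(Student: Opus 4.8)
The plan is to exhibit a bijection between $(\Xhfin{M})^*$ and $GBMO^k(M)/q_k(M)$ implemented by the pairing $G + q_k(M) \mapsto \la_G$, where $\la_G(F) := \int_M F\,G\wrt\mu$, and to show it is bicontinuous. Well-definedness and boundedness in the direction $G \mapsto \la_G$ are already essentially contained in Lemma~\ref{l: dual of Xhfin}~\rmi: taking $\bG := \{\pi_{B,k}(G): B\in\cB\}$, one has $\norm{\bG}{\bYk} = \norm{G}{GBMO^k}$ and $\la_{\bG}$ restricted to $\Xhfin{M}$ coincides with $\la_G$, so that $\norm{\la_G}{(\Xkfin)^*} \leq \norm{G}{GBMO^k}$. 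Injectivity is immediate: if $\la_G = 0$ on $\Xhfin{M}$ then $\int_M A\,G\wrt\mu = 0$ for every $X^k$-atom $A$, forcing $\pi_{B,k}(G) = 0$ for all $B \in \cB_{s_0}$, whence $G \in q_k(M)$ by Proposition~\ref{p: global qhfunctions}, exactly as in the injectivity step of Theorem~\ref{t: main}.

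The substance is surjectivity, and the strategy is to reconstruct a compatible family $\bG$ from an abstract functional. Given $\la \in (\Xhfin{M})^*$, I fix a ball $B$ and restrict $\la$ to $\QBkperp$, whose elements, extended by zero, are multiples of $X^k$-atoms (admissible when $r_B \leq s_0$, and reducible to finite combinations of admissible atoms when $r_B > s_0$ by Remark~\ref{scale inv}). By Lemma~\ref{l: estimate of Xkfin norm} one has $\norm{F}{\Xkfin} \leq C(1+r_B)\,\mu(B)^{1/2}\,\norm{F}{2}$ for $F \in \QBkperp$, so $F \mapsto \la(F)$ is a bounded linear functional on the Hilbert space $\QBkperp \subset \ld{B}$. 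The Riesz representation theorem then produces a unique $G_B \in \QBkperp$ (using that $\QBkperp$ is closed under conjugation) with $\la(F) = \int_B F\,G_B\wrt\mu$ for all $F \in \QBkperp$ and $\norm{G_B}{2} \leq C\,\norm{\la}{(\Xkfin)^*}(1+r_B)\mu(B)^{1/2}$.

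Next I would verify that $\bG := \{G_B\}$ lies in $\bYk(M)$. For the compatibility condition in Definition~\ref{def: the space Y}~\rmi, I use that restriction maps $q_k^2(B')$ into $q_k^2(B)$ when $B \subset B'$, so any $F \in \QBkperp$ supported in $B$ also lies in $q_k^2(B')^\perp$; comparing $\la(F) = \int_B F\,G_B\wrt\mu$ with $\la(F) = \int_{B'} F\,G_{B'}\wrt\mu = \int_B F\,\Pik{B}(G_{B'})\wrt\mu$ and invoking uniqueness of the Riesz representative yields $\Pik{B}(G_{B'}) = G_B$. Condition~\rmii\ follows from the $L^2$ bound above restricted to $\cB_{s_0}$, giving $\norm{\bG}{\bYk} \leq C\,\norm{\la}{(\Xkfin)^*}$. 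Corollary~\ref{c: G and bG} now furnishes $G \in GBMO^k(M)$ with $\pi_{B,k}(G) = G_B$ for every $B$ and $\norm{G}{GBMO^k} \asymp \norm{\bG}{\bYk}$. Finally, for any $X^k$-atom $A$ supported in $B$ one has $\la(A) = \int_B A\,G_B\wrt\mu = \int_B A\,\pi_{B,k}(G)\wrt\mu = \int_M A\,G\wrt\mu$, the last step using $A \perp q_k^2(B)$; by linearity $\la = \la_G$ on $\Xhfin{M}$. Combining this with the estimate of the first paragraph gives $\norm{\la}{(\Xkfin)^*} \asymp \norm{G}{GBMO^k}$.

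I expect the main obstacle to be organisational rather than analytic: the genuinely hard input — that an abstract $\bYk$-family is realised by a single $GBMO^k$ function — is Corollary~\ref{c: G and bG}, which itself rests on the isomorphism of Theorem~\ref{t: main}. The delicate local points to get right are the reduction of non-admissible atoms (for balls of radius exceeding $s_0$) to admissible ones, so that Lemma~\ref{l: estimate of Xkfin norm} applies uniformly in $B$, and the careful bookkeeping of the orthogonal projections $\Pik{B}$ ensuring the family $\{G_B\}$ is genuinely compatible.
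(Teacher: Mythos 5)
Your proposal is correct and follows essentially the same route as the paper's proof: boundedness of $\la_G$ via Lemma~\ref{l: dual of Xhfin}~\rmi, and surjectivity by restricting $\la$ to each $\QBkperp$, applying Lemma~\ref{l: estimate of Xkfin norm} and the Riesz representation theorem to produce a compatible family $\bG \in \bYk(M)$, and then invoking Corollary~\ref{c: G and bG} to realise $\bG$ as $\{\pi_{B,k}(G)\}$ for a single $G$ in $GBMO^k(M)$. The only additions are minor explicit details (the injectivity step, which the paper delegates to the proof of Theorem~\ref{t: main}, and the verification $\la(A)=\int_M A\,G\wrt\mu$ on atoms, which the paper leaves implicit), so there is no substantive difference.
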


\begin{proof}
By Lemma~\ref{l: dual of Xhfin}~\rmi\ the linear functional 
$\la_{G}$ is continuous on $\Xhfin{M}$ and 
$\norm{\la_G}{(X^k_{\rm{fin}})^*} \leq \norm{G}{GBMO^k}$.

Conversely, given a continuous linear functional~$\la$ on $
\Xhfin{M}$, 
 for every $B$ in $\cB$ the restriction of~$\la$ to $\QBkperp$ is 
in $\bigl(\QBkperp\bigr)^*$.   Indeed, 
\begin{equation} \label{f: estimate la}
\begin{aligned}
\mod{\la(F)}
& \leq \norm{\la}{(X^k_{\rm{fin}})^*} \, \norm{F}{\Xkfin} \\
& \leq C\, \norm{\la}{(X^k_{\rm{fin}})^*} 
     \, (1+r_{B}) \, \mu(B)^{1/2} \, \norm{F}{2}
\quant F \in \QBkperp,
\end{aligned}
\end{equation}
where we have used Lemma~\ref{l: estimate of Xkfin norm}. 
By the Riesz Representation Theorem, there exists $G_B$ in $
\QBkperp$
such that 
$$
\la(F)
= (F,G_B)
\quant F \in \QBkperp,
$$
where $(\cdot,\cdot)$ denotes the inner product in $\QBkperp$,
i.e., the restriction to $\QBkperp$ of the 
inner product in $\ld{B}$.  Furthermore $\norm{\la_{\vert 
\QBkperp}}{}
= \norm{G_B}{2}$.   By combining this and (\ref{f: estimate la}),
we obtain that 
$$
\norm{G_B}{2}
\leq C\, \norm{\la}{(X^k_{\rm{fin}})^*} \, (1+r_B) \, \mu(B)^{1/2},
$$
where $C$ is independent of $B$.  
Taking the supremum over all balls $B$ in $\cB_{s_0}$ we obtain 
\begin{equation}\label{norms}
\sup_{B\in\cB_{s_0}}\Big(\frac{1}{\mu(B)}\,\
    \int_B|G_B|^2\wrt\mu\Big)^{1/2}
\leq C\, \norm{\la}{(X^k_{\rm{fin}})^*}. 
\end{equation}
Suppose that $B,B'$ are balls such that $B\subset B'$ and identify $L^2(B)$ with the subspace of all functions in $L^2(B')$ that vanish on $B'\setminus  B$.
Then $\QBkperp \subset q_k^2(B')^\perp$ and
$$
\int_B F \, G_B \wrt \mu
= \int_{B'} F \, G_{B'} \wrt \mu
\quant F \in \QBkperp.
$$
Hence $G_B = \PBk (G_{B'})$.   
As a consequence, $\bG := \{G_B: B\in\cB\}$ is in $\bYk(M)$, and 
$$
\norm{\bG}{\bYk}
\leq C\, \norm{\la}{(X^k_{\rm{fin}})^*}.  
$$
By Corollary~\ref{c: G and bG} there exists $G$ in $GBMO^k(M)$
such that $G_B = \pi_{k,B}(G)$ and $\norm{\bG}{\bYk} = \norm{G}
{GBMO^k}$.
Therefore $\la$ agrees with $\la_G$. 
\end{proof}

A corollary of the theory we developed is the following.

\begin{corollary} \label{c: equivalent norms}
If $k$ is a positive integer and $M$ has 
$C^{2k-2}$ bounded geometry  then the following hold:
\begin{enumerate}
\item[\itemno1]
the $\Xkfin$-norm and the $X^k$-norm are
equivalent on $\Xhfin{M}$;
\item[\itemno2]
suppose that $Z$ is a Banach space and that 
$\cT$ is a linear operator from $\Xhfin{M}$ into $Z$, such that
$$
L:= \sup \{ \norm{\cT A}{Z}: \hbox{$A$ {\rm{admissible}} $X^k$-
{\rm{atom}}} \}
< \infty.
$$
Then $\cT$ extends to a unique bounded linear operator from $
\Xh{M}$ to $Z$. 
\end{enumerate}
\end{corollary}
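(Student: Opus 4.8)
The plan is to deduce both assertions from the duality results already in hand, with \rmi\ carrying the real content and \rmii\ following routinely. Since the inequality $\bignorm{F}{\Xk} \leq \bignorm{F}{\Xkfin}$ holds trivially from the definitions for every $F$ in $\Xhfin{M}$, all the work in \rmi\ lies in the reverse estimate $\bignorm{F}{\Xkfin} \leq C\,\bignorm{F}{\Xk}$.

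For this I would argue by duality, exploiting that both $\Xhfin{M}$ and $\Xh{M}$ have the same dual, up to isomorphism of norms. For any $F$ in the normed space $\Xhfin{M}$, the Hahn--Banach theorem gives
$$
\bignorm{F}{\Xkfin}
= \sup\bigset{\mod{\la(F)}: \la \in (\Xkfin)^*,\ \norm{\la}{(X^k_{\rm{fin}})^*} \leq 1}.
$$
By Theorem~\ref{t: dual of Xk}, every such $\la$ equals $\la_G$ for some $G$ in $GBMO^k(M)$ with $\norm{G}{GBMO^k} \leq C\,\norm{\la}{(X^k_{\rm{fin}})^*} \leq C$. On the other hand, Theorem~\ref{t: main} (equivalently Lemma~\ref{l: dual of Xhfin}~\rmv) shows that $\la_G$ extends to a continuous functional on $\Xh{M}$ with $\norm{\la_G}{(X^k)^*} \leq C'\,\norm{G}{GBMO^k}$. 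Combining these, $\mod{\la(F)} = \mod{\la_G(F)} \leq C'\,\norm{G}{GBMO^k}\,\bignorm{F}{\Xk} \leq C''\,\bignorm{F}{\Xk}$, and taking the supremum over $\la$ yields the desired inequality, whence the two norms are equivalent on $\Xhfin{M}$.

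For \rmii, let $F = \sum_{j=1}^N c_j A_j$ be any representation of a given $F$ in $\Xhfin{M}$ as a finite combination of admissible $X^k$-atoms. By linearity and the hypothesis, $\norm{\cT F}{Z} \leq \sum_{j=1}^N \mod{c_j}\,\norm{\cT A_j}{Z} \leq L \sum_{j=1}^N \mod{c_j}$; taking the infimum over all such representations gives $\norm{\cT F}{Z} \leq L\,\bignorm{F}{\Xkfin}$. Invoking \rmi\ I then obtain $\norm{\cT F}{Z} \leq C\,L\,\bignorm{F}{\Xk}$, so $\cT$ is bounded for the $\Xk$-norm on the subspace $\Xhfin{M}$. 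Since $\Xhfin{M}$ is norm dense in $\Xh{M}$ (its elements are exactly the partial sums of the atomic decompositions furnished by Theorem~\ref{t: at dec}), $\cT$ extends uniquely to a bounded linear operator from $\Xh{M}$ into $Z$.

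I expect the only genuine obstacle to have already been surmounted in the preceding sections: the fact that a functional continuous on $\Xhfin{M}$ automatically extends, with comparable norm, to a functional continuous on $\Xh{M}$ is precisely the identification of both duals with $GBMO^k(M)/q_k(M)$ supplied by Theorems~\ref{t: main} and~\ref{t: dual of Xk}. Granting that, the corollary is a standard abstract argument; the one point to record carefully is the density of $\Xhfin{M}$ in $\Xh{M}$, needed for the unique extension in \rmii, together with the trivial observation that $\norm{\cdot}{\Xkfin}$ is a genuine norm (it dominates $\norm{\cdot}{\Xk}$, hence separates points).
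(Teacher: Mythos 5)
Your proof is correct and follows essentially the same route as the paper: part \rmi\ is deduced from the identification of both duals with $GBMO^k(M)/q_k(M)$ (Theorems~\ref{t: main} and~\ref{t: dual of Xk}), and part \rmii\ is the same triangle-inequality-plus-density argument. The only difference is that you spell out, via Hahn--Banach, the step the paper leaves implicit in the phrase ``follows directly from the fact that the dual spaces are isomorphic,'' which is a useful clarification but not a different method.
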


\begin{proof}
Part \rmi\ follows directly from the fact that $\Xhfin{M}$
and $\Xh{M}$ have isomorphic dual spaces.

To prove \rmii\ observe that a direct consequence of the 
assumption is that if
$F = \sum_{j=1}^N c_j \, A_j$ is a function in $\Xhfin{M}$,
then, by the triangle inequality,   
$$  
\norm{\cT F}{Z}
\leq L\, \sum_{j=1}^N \mod{c_j}\,.
$$ 
By taking the infimum over all representations of $F$
as a finite linear combination of $X^k$-atoms, we obtain
$$
\norm{\cT F}{Z}
\leq L\, \norm{F}{\Xkfin}
\leq C \, \norm{F}{\Xk}
\quant F \in \Xhfin{M}.
$$
We have used \rmi\ in the second inequality above.  
The required conclusion follows from the density of $\Xhfin{M}$ in 
$\Xh{M}$.
\end{proof}

Quite often one encounters the following situation.
Suppose that $\cT$ is a bounded linear operator on $\ld{M}$. 
Then $\cT$ is automatically defined on $\Xhfin{M}$. Assume that
$$
L : = \sup\{ \norm{\cT A}{L^1}: A~{\rm{admissible}}~ X^k-
{\rm{atom}} \} 
< \infty.
$$
By the previous results, the restriction of $\cT$ to $\Xhfin{M}$
has a unique extension to a bounded linear operator $\wt \cT$
from $\Xh{M}$ to $\lu{M}$. The question is whether the operators
$\cT$ and $\wt \cT$ are consistent, i.e., whether they coincide on 
the 
intersection $\Xh{M} \cap \ld{M}$ of their domains.

\begin{proposition}\label{p: consH1capL2}
Suppose that $k$ is a positive integer and $M$ has 
$C^{2k-2}$ bounded geometry
and that $\cT$ is bounded on $\ld{M}$.  The following hold:
\begin{enumerate}
\item[\itemno1]
if
$
L_0 : = \sup\{ \norm{\cT a}{L^1}: a~{\rm{admissible~}}H^1-
{\rm{atom}} \} 
< \infty,
$
then the unique continuous linear extension $\wt \cT$   
of the restriction of $\cT$ to $\hufin{M}$
to an operator from $\hu{M}$ to $\lu{M}$ 
agrees with $\cT$ on $\hu{M} \cap \ld{M}$;
\item[\itemno2]
if
$
L : = \sup\{ \norm{\cT A}{L^1}: A~{\rm{admissible~}} X^k-
{\rm{atom}} \} 
< \infty,   
$
then 
$$
\sup\{ \norm{\cT \cU^k a}{L^1}: a~{\rm{admissible~}} H^1-
{\rm{atom}} \} 
< \infty;   
$$
\item[\itemno3]
if
$
L : = \sup\{ \norm{\cT A}{L^1}: \hbox{\rm{$A$ admissible $X^k$-
atom}} \} 
< \infty,  
$
then the unique continuous linear extension $\wt \cT$   
of the restriction of $\cT$ to $\Xhfin{M}$ to an operator from $
\Xh{M}$ to $\lu{M}$
agrees with $\cT$ on $\Xh{M} \cap \ld{M}$.
\end{enumerate}
\end{proposition}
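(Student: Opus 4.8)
The plan is to establish the three parts in the order \rmi, \rmii, \rmiii, reducing \rmiii\ to \rmi\ by means of \rmii.

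Part \rmi\ is the base case and carries the only genuinely delicate point. By construction $\cT$ and $\wt\cT$ already agree on $\hufin{M}$, so it would suffice to approximate a given $f$ in $\hu{M}\cap\ld{M}$ by a sequence $\{f_n\}$ in $\hufin{M}$ converging to $f$ \emph{simultaneously} in the $\hu{M}$-norm and in the $\ld{M}$-norm. Granting this, I would argue as follows: $\wt\cT f_n=\cT f_n$ for every $n$; the left-hand side converges to $\wt\cT f$ in $\lu{M}$ because $\wt\cT$ is bounded from $\hu{M}$ to $\lu{M}$, while the right-hand side converges to $\cT f$ in $\ld{M}$ because $\cT$ is bounded on $\ld{M}$. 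Since convergence in either $\lu{M}$ or $\ld{M}$ forces convergence in $\ldloc{M}$, hence in the sense of distributions, the two limits must coincide and $\wt\cT f=\cT f$. The existence of a finite atomic approximation converging in both norms is exactly the point flagged in the introduction; I would extract it from the Calder\'on--Zygmund construction underlying the atomic decomposition of $\hu{M}$, whose part above height $2^N$ is small in $\ld{M}$ when $f$ lies in $\ld{M}$, precisely as in \cite{MSV1,MSV2}. \textbf{This simultaneous approximation is the step I expect to be the main obstacle}, being the only place where the atomic $\hu{M}$-topology must be reconciled with the Hilbert-space $\ld{M}$-topology.

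Part \rmii\ is a direct estimate resting on Lemma~\ref{l: Uka}. For an admissible $H^1$-atom $a$ supported in $B(p,R)$ I would invoke the decomposition $\cU^k a=\sum_{i=0}^d A_i'+\sum_{j=1}^\infty A_j''$, which converges in $\ld{M}$, and bound $\norm{\cT\cU^k a}{L^1}$ termwise. Each $A_i'$ is, up to the scalar $C\,4^{-i}$, an $X^k$-atom supported in the ball $B_i'$ of radius $(4^i+1)R$, which is bounded above independently of $a$; by Remark~\ref{scale inv} such an atom is a finite combination of admissible $X^k$-atoms whose coefficient sum is controlled by its scale, so the hypothesis $L<\infty$ yields $\norm{\cT A_i'}{L^1}\le C\,4^{-i}(4^i+1)R\,L\le C\,R\,L$, and the $d+1$ terms (with $d\approx\log_4(1/R)$) sum to a quantity bounded uniformly for $R\le s_0$. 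For the tail I would use that each $A_j''$ already lies in $\Xhfin{M}$ with $\norm{A_j''}{X^k_{\mathrm{fin}}}\le C\,\e^{-\vep j}$, so a near-optimal finite atomic representation gives $\norm{\cT A_j''}{L^1}\le C\,\e^{-\vep j}\,L$ and the geometric series converges. Since the pieces have summable $\lu{M}$-norms, the series $\sum_i\cT A_i'+\sum_j\cT A_j''$ converges in $\lu{M}$; matching its sum with the $\ld{M}$-limit $\cT\cU^k a$ along an almost everywhere convergent subsequence identifies it with $\cT\cU^k a$ and gives $\norm{\cT\cU^k a}{L^1}\le C\,L$ with $C$ independent of $a$.

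Part \rmiii\ I would reduce to \rmi\ by transporting through the isomorphism $\cU^k$. Put $\cS:=\cT\cU^k$, which is bounded on $\ld{M}$ as a composition of $\ld{M}$-bounded operators; by \rmii\ it is uniformly bounded in $\lu{M}$ on admissible $H^1$-atoms, so \rmi\ applies to $\cS$ and its extension $\wt\cS\colon\hu{M}\to\lu{M}$ agrees with $\cS$ on $\hu{M}\cap\ld{M}$. I would then check the operator identity $\wt\cT\circ\cU^k=\wt\cS$ on $\hu{M}$: both sides are bounded from $\hu{M}$ to $\lu{M}$, so it is enough to verify it on an admissible $H^1$-atom $a$, where Lemma~\ref{l: Uka} writes $\cU^k a$ as a series of functions lying in $\Xhfin{M}$, on which $\wt\cT$ coincides with $\cT$. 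Finally, for $F$ in $\Xh{M}\cap\ld{M}$ I would set $f:=\cU^{-k}F$; since $\cU^{-k}=\sum_{j=0}^k\binom{k}{j}\si^j\cL^{-j}$ is bounded on $\ld{M}$ and is an isometry of $\Xh{M}$ onto $\hu{M}$, the function $f$ lies in $\hu{M}\cap\ld{M}$. Chaining the identities then gives $\cT F=\cT\cU^k f=\cS f=\wt\cS f=\wt\cT\cU^k f=\wt\cT F$, where the middle equality is \rmi\ applied to $\cS$ and the outer ones are the identity just verified, completing the proof.
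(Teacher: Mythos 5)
Your proposal is correct and takes essentially the same route as the paper: part \rmii\ via the decomposition of Lemma~\ref{l: Uka} with termwise $L^1$-estimates, and part \rmiii\ by applying \rmi\ and \rmii\ to the operator $\cT\circ\cU^k$ and transporting back through $\cU^{-k}$. For part \rmi\ the paper gives no details, simply citing \cite{MSV1}, and the simultaneous $H^1$/$L^2$ approximation you flag as the delicate step is precisely the content of that cited argument, so your treatment matches the paper's level of rigor there as well.
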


\begin{proof}
The proof of \rmi\
follows the same line of the proof of \cite[Proposition 4.2]{MSV1}, 
and is omitted. 

We give the proof of \rmii\ for $k=1$.   The proof in the case where 
$k\geq 2$
is similar and is omitted. 

Suppose that the atom $a$ is supported in the ball 
$B(p,R)$ with $R\leq s_0$. 
The proof hinges on the decomposition 
$$
\cU a 
= \sum_{i=0}^{d} A_i'
+  \sum_{j=1}^\infty \, A_j'',
$$ 
given in Lemma \ref{l: Uka}.
The function $4^i\,A_i'$ is a multiple of an admissible $X^1$-atom. 
Then 
$$
\bignorm{\cT(4^i\,A_i')}{L^1}\leq C\,L\,.
$$
Thus,
\begin{equation}\label{sumfin}
\Bignorm{\sum_{i=1}^d \cT A_i'}{L^1}\leq {C}\,L\,\sum_{i=1}^d4^{-i}
\leq C\,L, 
\end{equation}
where $C$ is independent of $a$. 

For every $j$ in $\BN$ by \eqref{ecodec} we have 
$$
\norm{\cT A_j''}{L^1}\leq C\,\e^{-\vep j}\,L.
$$ 
Thus, 
\begin{equation}\label{suminfin}
\sum_{j=1}^{\infty}\norm{\cT A_j''}{L^1} 
\leq C\,L\,\sum_{j=1}^{\infty}\e^{-\vep j} 
\leq C\,L.
\end{equation}
The inequalities \eqref{sumfin} and \eqref{suminfin} imply that
$$
\norm{\cT \cU a}{L^1}\leq C\,L,
$$
as required.

Finally, we prove \rmiii.
We consider the operator $\cT\circ \cU^k$, which is bounded on 
$\ld{M}$ and uniformly bounded on admissible $H^1$-atoms by 
\rmii. 
By \rmi\
the unique extension $\wt{\cT \circ \cU^k}$ of the restriction of 
$\cT\circ\cU^k$ to $\hufin{M}$ to an operator bounded from 
$\hu{M}$ to $\lu{M}$ agrees with $\cT\circ\cU^k$ on $\hu{M}\cap 
\ld{M}$. 

Then the operator $\big(\wt{\cT \circ \cU^k}\big)\circ \cU^{-k}$ 
is a bounded operator from $\Xh{M}$ to $\lu{M}$ 
which extends the restriction of $\cT$ to $\Xhfin{M}$. 
Then it coincides with the unique continuous linear extension $\wt 
\cT$ 
of the restriction of $\cT$ to $\Xhfin{M}$, 
i.e., $\wt \cT= \big(\widetilde{\cT \circ \cU^k}\big)\circ \cU^{-k}$. 

Moreover, for every function $F$ in $\Xh{M}\cap\ld{M}$ we have 
that 
$\cU^{-k}F$ is in $\hu{M}\cap\ld{M}$. Then
$$
\wt \cT F=\big(\wt{\cT \circ \cU^k}\big)\circ \cU^{-k}F
= \big(\cT \circ \cU^k\big) \circ \cU^{-k}F
= \cT F.
$$
Hence $\wt \cT$ agrees with $\cT$ on $\Xh{M}\cap\ld{M}$.
\end{proof}

\end{document}